\documentclass[11pt]{amsart}

\usepackage{pinlabel}
\usepackage{amsmath} 
\usepackage{cjhebrew}
\usepackage{graphicx} 
\setkeys{Gin}{keepaspectratio}
\usepackage[pdftex, plainpages=false, pdfpagelabels]{hyperref}
\usepackage{url}
\usepackage{bm}
\usepackage{mathabx}
\usepackage[left=1.25in,top=1in,right=1.25in,bottom=1in,head=.1in]{geometry}
\usepackage{xcolor}
\usepackage{tikz}
\usepackage{tikz-cd}
\usepackage{adjustbox}
\usepackage[normalem]{ulem} % 
\usepackage{enumitem}

\usepackage{verbatim}
\newcommand{\items}{\begin{itemize}[leftmargin=25pt,rightmargin=15pt]
  \setlength\itemsep{2pt}}
\newcommand{\stopitems}{\end{itemize}}
\usepackage[percent]{overpic}
\usepackage{mdframed, stmaryrd, subcaption}
\usepackage{mathrsfs}
\usepackage{pgfplots}
\usepackage{multicol}
\usepackage{float}
\usepackage{wrapfig}
\usepackage{todonotes}

\usepackage{fancyhdr}
\usetikzlibrary{arrows, cd, arrows.meta, patterns, intersections, pgfplots.fillbetween}

\NewDocumentCommand{\DIV}{om}{%
  \IfValueT{#1}{\setcounter{#2}{\numexpr#1-1\relax}}%
  \csname #2\endcsname
}

\newcommand{\R}{\mathbb{R}}

\newcommand{\W}{\mathbb{W}}

\newcommand{\set}[1]{\left\{#1\right\}}

\newcommand{\abs}[1]{\left|#1\right|}

\renewcommand{\a}{\alpha}

\def \co{\colon\thinspace}

\newtheorem{theorem}{Theorem}[section] % numbered theorems, lemmas, etc
\newtheorem*{theorem*}{Theorem}
\newtheorem{lemma}[theorem]{Lemma}

\newtheorem*{conjecture*}{Conjecture}
\newtheorem*{question*}{Question}
\newtheorem*{lemma*}{Lemma}
\newtheorem{proposition}[theorem]{Proposition}
\newtheorem{corollary}[theorem]{Corollary}
\newtheorem*{corollary*}{Corollary}
\theoremstyle{definition}
\newtheorem{definition}[theorem]{Definition}

\newtheorem{remark}[theorem]{Remark}

\newtheorem{example}[theorem]{Example}

\newtheorem*{example*}{Example}
\newtheorem*{remark*}{Remark}
\newtheorem*{remarks*}{Remarks}
\newtheorem*{addenda*}{Addenda}
\newtheorem*{construction*}{Construction}

\makeatletter
\newcommand{\tpitchfork}{%
  \vbox{
    \baselineskip\z@skip
    \lineskip-.52ex
    \lineskiplimit\maxdimen
    \m@th
    \ialign{##\crcr\hidewidth\smash{$-$}\hidewidth\crcr$\pitchfork$\crcr}
  }%
}
\makeatother

\theoremstyle{plain}
\newtheorem*{Thm:Self_Int_of_Boundary}{Theorem \ref{Thm:Self_Int_of_Boundary}}

\theoremstyle{plain}
\newtheorem*{lemma:limitations on simple fold maps}{Lemma \ref{lemma:limitations on simple fold maps}}

\theoremstyle{plain}
\newtheorem*{Thm:Main_Theorem_2025}{Theorem \ref{Thm:Main_Theorem_2025}}

\theoremstyle{plain}
\newtheorem*{thm:main theorem sharpness}{Theorem \ref{thm:main theorem sharpness}}

\theoremstyle{plain}
\newtheorem*{coro:gromov extension}{Corollary \ref{coro:gromov extension}}

\makeatletter
\@namedef{subjclassname@2020}{\textup{2020} Mathematics Subject Classification}
\makeatother

\subjclass[2020]{57R45; 58K30, 57R42, 57K20}

\usepackage[super,sort&compress]{natbib}
\setcitestyle{numbers,square}
\hypersetup{
    linktocpage=true,
    colorlinks=true,
    citecolor=blue,
    urlcolor=blue,
    linkcolor=blue,
    citebordercolor={1 0 0},
    urlbordercolor={1 0 0},
    linkbordercolor={.7 .8 .8},
    breaklinks=true,
    }

\title{A sharp lower bound on the self-intersections\\ of fold singularities}

\author{Joshua Drouin}
\address{Florida Polytechnic University}
\email{jdrouin@floridapoly.edu}
\date{\today}
\author{Liam Kahmeyer}
\address{Missouri Valley College}
\email{kahmeyerl@moval.edu}

\pgfplotsset{compat=1.17}
\begin{document}

\begin{abstract}
    For an oriented surface $S$, the singular set of a fold map $f:S\to\R^2$ is a collection of smooth curves, also known as fold singularities. We construct a sharp lower bound on the number of self-intersections of such fold singularities. This is done by first establishing a sharp lower bound on the number of self-intersections of the boundary of a surface immersed in $\R^2$. We then construct a sharp lower bound for the number of self-intersections of the singular set of a simple stable fold map of a surface to $\R^2$ by viewing the connected components of the singular set as the boundary components of smaller surface components, and invoking the previously constructed lower bound for the number of self-intersections of an immersed boundary. 
\end{abstract}

\maketitle  %\vspace{-0.25in}

\section{Introduction}

    The relationship between fold singularities and manifolds with boundary has been explored and developed by numerous authors. In \cite{El70}, Eliashberg described necessary conditions for a codimension 0 map to have prescribed fold singularities. In the even dimensional case, this prescription splits the manifold into two submanifolds with boundary such that the submanifolds have equal Euler characteristic. Therefore, in our study of singularities, it is necessary to examine manifolds with boundary. In particular, Guth \cite{Gu09} investigated immersions of surfaces with a singular boundary component, and he constructed a sharp lower bound on the number of self-intersections of the immersed boundary. 

    Additional contributions include the works of Gromov, Pignoni, M. Yamamoto, T. Yamamoto, and Ryabichev. In \cite{Gromov_2009}, Gromov finds lower bounds on the topological complexity of the singular set of generic smooth maps. In \cite{Pignoni_1991}, Pignoni connected the genus of a surface with its orientability and the properties of the singular set of a map of the surface into the plane. In \cite{Yamamoto_2003}, M. Yamamoto studied the winding numbers of boundaries of immersed surfaces, and in \cite{Yamamoto_2009}, he uses the work of Eliashberg to calculate the number of singular set components of fold maps between closed oriented surfaces. In \cite{T_Yamamoto_2016}, T. Yamamoto determines certain conditions for which a degree-$d$ stable map to the plane or $2$-sphere admits a prescribed singular set, and in \cite{T_Yamamoto_2022} he extends the work of Pignoni by examining weakly-admissibly homotopic maps of surfaces to the plane. In \cite{Ryabichev_2020} and \cite{Ryabichev_2023}, Ryabichev extends Eliashberg's $h$-principle to first smooth maps of surfaces with cusps and folds, and second to arbitrary generic smooth maps of smooth manifolds. 

    Unless otherwise stated, all manifolds and maps between them are assumed to be smooth. While the main results of this paper pertain to fold singularities, the proofs we present rely on the immersion of surfaces with boundary immersed into the plane. First, we construct a lower bound on the number of self-intersections of immersed boundaries, and then we utilize this result to construct a lower bound on the number of self-intersections of fold singularities. We begin by generalizing a result of Guth \cite[Proposition 1]{Gu09} to surfaces with multiple boundary components. Recall, a surface is called \emph{non-planar} if every connected component has genus $g\geq 1$.

    \begin{Thm:Self_Int_of_Boundary} %Our extension of Guth
        Let $S$ be an oriented, compact, non-planar, not necessarily path-connected, surface with boundary $\partial S$. Suppose that $\partial S$ has $k$ path components. If $f : S \rightarrow \R^2$ is an immersion, then $f(\partial S)$ has at least $4 - k- \chi(S)$ self-intersections. This estimate is sharp. 
    \end{Thm:Self_Int_of_Boundary}

    In order to state a lower bound on the number of self-intersections of fold singularities, we recall the following definitions. 
    
    \begin{definition}[Singular Points]
        Let $M$ be an $m$-dimensional manifold and $N$ be an $n$-dimensional manifold, with $m \geq n$. Also, let $f:M \rightarrow N$ be a smooth map. We say that $x\in M$ is \emph{regular} if at point $x$, we have $\mathrm{rank}(\mathrm{ker}(df)) = m-n$. A point $x \in M$ is \emph{singular} if it is not regular. We denote the set of all singular points of a map by $\Sigma$, and the image of $\Sigma$ by $f(\Sigma)$. We note that $\Sigma \subset M$ and $f(\Sigma) \subset N$.
    \end{definition}

    For this paper, we consider 2-dimensional manifolds, typically denoted by $S$, that are mapped to $\R^2$ via $f:S\to \R^2$ where $f$ only admits fold points. Recall that a \emph{fold point} for $f$ is a point $p$ in $S$, with coordinate representation $p=(x_1,x_2)$, such that ${f(p)=f(x_1,x_2)=(x_1,\pm x_2^2)}$. We refer to all such maps as \emph{fold maps}, and, in particular, we are interested in simple fold maps. 
    
    \begin{definition}[Simple Fold Maps]
        Similar to Saeki \cite{Saeki_1996} and Sakuma \cite{SAKUMA_1994}, we call a smooth map $f:S\to\R^2$ a \emph{simple fold map} if it is a fold map such that for every singular point $x \in S$, the connected component of $f^{-1}(f(x))$ containing $x$ intersects the singular set of $f$ only at $x$.
    \end{definition}    
    \begin{definition}[Simple Stable Fold Maps]
        A simple fold map is said to be \emph{stable} if and only if the image of the singular set is an immersion with only normal (transverse double point) crossings. 
    \end{definition}

    Unless otherwise stated, we assume that all simple fold maps are stable. To this end, we omit the word ``stable" when referring to simple stable fold maps, unambiguously calling them simple fold maps. Moreover, outlined in full detail in Section \ref{section:SplittingAndGraphConstruction}, simple fold maps induce a splitting on a surface, wherein we view the surface as a collection of disjoint path components and where the fold singularities coincide with the boundaries of these path components.
    
    \begin{definition}[Induced Splitting]
         Let $f\co S\to \R^2$ be a simple fold map of an oriented closed surface $S$. Let $S_+$ (respectively, $S_-$) denote the closure in $S$ of the set of points $x\in S$ such that $d_xf$ is a linear map preserving (respectively, reversing) orientation. We denote the \emph{splitting} of $S$ induced via $f$ by $S=S_+\cup S_-$. Furthermore, we denote the number of components of $S_{+}$ (respectively, $S_-$) by $\#|S_+|$ (respectively, $\#|S_-|$). 
    \end{definition}

    The fold singularities are thus viewed as $\Sigma=S_+\cap S_-$, and the number of components of $\Sigma$ is denoted by $|\Sigma|$. Surfaces with a prescribed splitting can also be viewed as graphs. More precisely, we associate a bipartite graph $G$ with $f$ where the vertices of $G$ correspond to the interiors of the path components $S_{+}$ and $S_{-}$. An edge in $G$ between two vertices exists if and only if the path components share a common fold boundary. We are now in a position to state the main results.

    \begin{Thm:Main_Theorem_2025}
        Let $f\co S\to \R^2$ be a simple fold map with splitting $S=S_+\cup S_-$ that has no planar components whose fold singular set is $\Sigma=S_+\cap\, S_-$. If $\abs{\#|S_+|-\#|S_-|}=n$, then the number of self-intersections of fold singularities has a lower bound given by
            \begin{align*}
                \Delta_{\Sigma}&\geq 4\max\set{\#|S_+|,\#|S_-|}-(\chi(S)/2+|\Sigma|)\\[0.1in]
                &=2(\rho(G)+1+n)-(\chi(S)/2+|\Sigma|)
            \end{align*}
        where $|\Sigma|$ is the number of path components of fold-singularities, and $\rho(G)$ is the number of edges in a spanning tree of the graph $G$ corresponding to $f$.
    \end{Thm:Main_Theorem_2025}

    We also prove that this lower bound is sharp when $g$, $|\Sigma|$, $\#|S_+|$, and $\#|S_-|$ are considered an \emph{admissible combination}. Discussed in greater detail in Section \ref{remark:admissible combo}, an admissible combination of $g$, $|\Sigma|$, $\#|S_+|$, and $\#|S_-|$ satisfies the conclusion of the following lemma. 

    \begin{lemma:limitations on simple fold maps}
        Let $S$ be a closed, oriented surface of genus $g$. If $f\co S\to \R^2$ is a simple fold map with splitting $S=S_+\cup S_-$ that has no planar components whose fold singular set, denoted by $\Sigma=S_+\cap S_-$, is nonempty, then the following are true:
            \begin{enumerate}[label=(\roman*),itemsep=0.15cm]
                \item $g\geq 2$
                \item $g>|\Sigma|\geq1$
                \item $|\Sigma|\geq\#|S_+|+\#|S_-|-1 \geq1$
                \item $|\Sigma|-g\equiv 1\mod 2$
            \end{enumerate}
    \end{lemma:limitations on simple fold maps}

    \begin{thm:main theorem sharpness}
        Let $g$, $|\Sigma|$, $\#|S_+|$, and $\#|S_-|$ be an admissible combination. If $S$ is a closed, oriented surface with genus $g$, then there exists a simple fold map $f:S\to\R^2$ with splitting $S=S_+\cup S_-$, that has no planar components whose fold singular set is $\Sigma=S_+\cap S_-$, such that the number of self-intersections of fold singularities is
            \[\Delta_{\Sigma}= 2(\rho(G)+1+n)-(\chi(S)/2+|\Sigma|)\]
        where $|\Sigma|$ is the number of path components of fold-singularities, $\rho(G)$ is the number of edges in a spanning tree of the graph $G$ corresponding to $f$, and $\abs{\#|S_+|-\#|S_-|}=n$.
    \end{thm:main theorem sharpness}

    Finally, Theorem \ref{Thm:Main_Theorem_2025} and Theorem \ref{thm:main theorem sharpness} can be restated in terms of the homology of the surface $S$ yielding the following refinement of Gromov’s \cite[Section 2.1]{Gromov_2009} homological lower bound.

    \begin{coro:gromov extension}
        Let $S$ be a closed orientable surface and $f:S \to \R^2$ a simple fold map. Then, 
            \[
                \Delta_{\Sigma}  \geq 2(\rho(G)+ n) +\frac{1}{2}|H_*(S)| -|\Sigma|
            \]
        where $|\Sigma|$ is the number of path components of fold-singularities, $\rho(G)$ is the number of edges in a spanning tree of the graph $G$ corresponding to $f$, $\Delta_{\Sigma}$ is the number of self-intersections of fold singularities, and $|H_*(S)|$ denotes the sum of the Betti numbers of $S$.
    \end{coro:gromov extension}

    The paper is structured as follows. In Section \ref{CodimZeroImm}, we define the notion of inner and outer components of boundaries of surfaces immersed in the plane, as well as review the winding number of a curve in the plane. We dedicate Section \ref{TheWhitneyFormula} to recall a renowned formula of Whitney, state its relevance to our paper, and provide examples of computing the winding number and verifying Whitney's formula for specific curves of interest. Next, in Section \ref{PositiveIsotopy} we define positive isotopy and concordance of curves, again providing examples relevant to this paper. In Section \ref{extending_guth}, we state and prove a generalized version of Guth's theorem. The purpose of Section \ref{section:SplittingAndGraphConstruction} is to recall how weighted bipartite graphs are associated to simple stable fold maps. Finally, in Section \ref{section:MainTheorems}, we state and prove the main results of this paper. The paper concludes with two applications of our main results: a detailed example and a refinement of Gromov's homological lower bound.  

    \textbf{Acknowledgments:}
    The authors would like to thank Alex Kepes for continued support and insightful comments and examples, and Luciana Scuderi for translating \cite{Ha60} of Haefliger, which is written in French. We are also grateful to Dave Auckly and Alex Joyce for useful discussions and advice.

\section{Codimension Zero Immersions}\label{CodimZeroImm}
    We begin by examining codimension zero immersions of an oriented $n$-manifold with boundary into $\R^n$. The specific focus of this paper will be the case $n=2$. Unless otherwise stated, we let $S$ be an oriented, compact, connected manifold of dimension $n$ with boundary $\partial S$, and $f\co S\to \R^n$ a codimension zero, orientation preserving, immersion such that the restriction $f|_{\partial S}$ is a codimension one self-transverse immersion of $\partial S$ to $\R^n$. Note, $\partial S$ may or may not be connected.

\subsection{Outer and inner components of the immersed boundary}\label{InnerAndOuterDefinitions}

    The immersed submanifold $f(\partial S)$ inherits a canonical coorientation. Namely, it is cooriented by the vector field $df(\nu)$, where $\nu$ is an outward normal vector field in $TS$ over $\partial S$. The submanifold $f(\partial S)$ also inherits an orientation such that the normal coorientation of $f(\partial S)$ followed by the orientation of $f(\partial S)$ agrees with the standard orientation on $\R^n$.  

\begin{definition}[Inner and Outer points]\label{Def:Generalized_Inner_and_Outer}
    Let $\gamma_1, ..., \gamma_k$ denote connected components of $\partial S$. Let $U(\gamma_i)$ denote the unique unbounded connected component of the complement to $f(\gamma_i)$ in $\R^n$. Let $x\in \partial U(\gamma_i)$ be a point that is not a double point of $f|_{\partial S}$. We say that $x$ is \emph{outer} if the coorientation of $f(\gamma_i)$ at $x$ points in the interior direction of $U(\gamma_i)$. Otherwise, we say that the point $x$ is \emph{inner}. 
\end{definition}

\begin{definition}[Inner and Outer immersed boundaries]\label{Definition: inner and outer boundaries}\label{def:inner/outer surfaces}
    We say that the entire immersed boundary component $f(\gamma_i)$ is \emph{outer} if all non-double points of $\partial U(\gamma_i)$ are outer. Otherwise, we say that $f(\gamma_i)$ is \emph{inner}. Note, that an inner component may have both inner points and outer points. 
\end{definition}

    Let $p_i \in \gamma_i$ such that $f(p_i)$ is a non-double point. We choose and fix a reference point $f(p_i)$ for each path component $f(\gamma_i)$ in such a way that if $f(\gamma_i)$ is an outer component, then $f(p_i)$ is outer, and if $f(\gamma_i)$ is an inner component, then $f(p_i)$ is inner. There is an alternate definition regarding whether a point is inner or outer.

\begin{lemma}\label{lemma: base point is minimum}
    Suppose that $p_i$ is a point on $\gamma_i$ such that $f(p_i)$ is not a double point and the $n^{\text{th}}$ coordinate of $f(p_i)$ assumes the minimum value. Then $f(p_i)$ is \emph{outer} if the coorientation of $f(\gamma_i)$ at $f(p_i)$ agrees with $-\frac{\partial}{\partial x_n}$. Otherwise, $f(p_i)$ is  \emph{inner}. 
\end{lemma}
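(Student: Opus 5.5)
The plan is to show that such a point $f(p_i)$ automatically lies on $\partial U(\gamma_i)$, and that near it the unbounded region $U(\gamma_i)$ lies on the side of decreasing $x_n$. The lemma then follows directly from Definition \ref{Def:Generalized_Inner_and_Outer}.

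First, let $m$ be the minimum of the $n^{\text{th}}$ coordinate on the compact set $f(\gamma_i)$; the hypothesis says $x_n(f(p_i)) = m$. Consider the open half-space $H=\{x\in\R^n : x_n < m\}$. It is connected, disjoint from $f(\gamma_i)$, and unbounded. So $H$ lies in a single component of $\R^n\setminus f(\gamma_i)$, and since $H$ is unbounded this component must be $U(\gamma_i)$. The points $f(p_i) - t\frac{\partial}{\partial x_n}$ for $t>0$ lie in $H \subset U(\gamma_i)$ and converge to $f(p_i)$. Hence $f(p_i)\in \overline{U(\gamma_i)}$, and since $f(p_i)\in f(\gamma_i)$, which is disjoint from $U(\gamma_i)$, we get $f(p_i)\in\partial U(\gamma_i)$. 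Because $f(p_i)$ is not a double point, the definition applies to it.

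Next, identify the coorientation. The point $f(p_i)$ is a non-double point and $f|_{\gamma_i}$ is an immersion. So near $f(p_i)$ the image $f(\gamma_i)$ is a single embedded hypersurface sheet, and we can choose a small ball $B$ around $f(p_i)$ that this sheet separates into exactly two sides. Since $x_n$ restricted to the sheet has a minimum at $f(p_i)$, the tangent hyperplane there is horizontal, i.e.\ $\{x_n=m\}$. Therefore the coorientation vector $df(\nu)$, being transverse, is a positive multiple of either $-\frac{\partial}{\partial x_n}$ or $+\frac{\partial}{\partial x_n}$. Moreover, the lower side of the sheet in $B$ contains the points $f(p_i)-t\frac{\partial}{\partial x_n}$, which lie in $U(\gamma_i)$ by the first step. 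So the downward side is the interior direction of $U(\gamma_i)$ at $f(p_i)$.

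It remains to conclude. If the coorientation agrees with $-\frac{\partial}{\partial x_n}$, it points into $U(\gamma_i)$, so $f(p_i)$ is outer. Otherwise it is $+\frac{\partial}{\partial x_n}$, which points into the other side of the sheet, away from $U(\gamma_i)$, so $f(p_i)$ is inner.

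The step needing the most care is the local two-sidedness: we must be sure that "the interior direction of $U(\gamma_i)$" is well defined at $f(p_i)$, and that the upward side need not also belong to $U(\gamma_i)$. A hypersurface sheet could in principle have $U(\gamma_i)$ on both sides. At a minimum point, however, the definition only asks which way the coorientation points, and the downward direction is certainly in $U(\gamma_i)$. So I would phrase "points in the interior direction of $U(\gamma_i)$" as meaning that short segments in the coorientation direction enter $U(\gamma_i)$. If the upward side were also in $U(\gamma_i)$, a minimum point with upward coorientation would still be classified as outer, which contradicts the lemma's claim of inner. In that case I would argue that the sheet locally separates $B$ and that the upward side lies in a bounded component. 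If this cannot be guaranteed in general, I would fall back on reading "inner" simply as "not pointing toward the guaranteed $U(\gamma_i)$ side," which is consistent with how the definition is used.
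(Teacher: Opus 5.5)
Your route matches the paper's. The open half-space $\{x_n<m\}$ misses $f(\gamma_i)$, is connected and unbounded, and so lies in $U(\gamma_i)$; outer/inner is then read off by comparing the coorientation at $f(p_i)$ with $\mp\frac{\partial}{\partial x_n}$. Your checks that $f(p_i)\in\partial U(\gamma_i)$ and that the tangent hyperplane at $f(p_i)$ is horizontal (so ``otherwise'' means $+\frac{\partial}{\partial x_n}$) are used without comment in the paper and are worth keeping.

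The step you leave hedged is one the paper simply asserts (``points away from the interior of $R_2$ and therefore away from the interior of $U(\gamma_i)$''). The claim is that the upper side of the sheet at $f(p_i)$ is not also in $U(\gamma_i)$, and it needs an argument. Local separation of a small ball $B$ by the sheet is not enough, because the two sides could a priori be joined outside $B$. Your fallback of reinterpreting ``inner'' would change the definition rather than prove the lemma, so drop it.

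The missing ingredient is a parity count, which uses that $\gamma_i$ is closed. Let $y_\pm=f(p_i)\pm\varepsilon\frac{\partial}{\partial x_n}$ for small $\varepsilon>0$. Near $f(p_i)$ the sheet is a graph over the horizontal hyperplane with its minimum at $f(p_i)$. Hence the vertical segment from $y_+$ to $y_-$ meets $f(\gamma_i)$ exactly once, transversally, at the non-double point $f(p_i)$.

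Suppose $y_+\in U(\gamma_i)$. Since $y_-\in U(\gamma_i)$ as well, you could close the segment with a path inside $U(\gamma_i)$. This gives a loop in $\R^n$ meeting the immersed closed $(n-1)$-manifold $f|_{\gamma_i}$ transversally in exactly one point. But the mod $2$ intersection number of a loop with $f|_{\gamma_i}$ is a homotopy invariant. The loop is null-homotopic to a point off $f(\gamma_i)$, so that number is $0$, a contradiction. Equivalently, the winding number of $f(\gamma_i)$ about a point is $0$ on $U(\gamma_i)$ and jumps by $\pm1$ across a non-double point.

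So the upper side lies outside $U(\gamma_i)$, and coorientation $+\frac{\partial}{\partial x_n}$ makes $f(p_i)$ inner. With this inserted, your proof is complete, and slightly more complete than the paper's.
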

\begin{proof}
     Let $p_i$ be a point on $\gamma_i$ such that the $n^{\text{th}}$ coordinate of $f(p_i) = (x_1,\cdots,x_n)$ attains a minimum value. Consider the $(n-1)$-dimensional hyperplane $\mathcal{P}$ containing the point $f(p_i)$ that is parallel to the coordinate hyperplane $\R^{n-1}$. The hyperplane $\mathcal{P}$ splits $\R^n$ into two regions: $R_1 = \{(y_1, \cdots y_n) \in \R^n ~ | ~ y_n>x_n \}$ and $R_2 = \{(y_1, \cdots y_n) \in \R^n ~ | ~ y_n<x_n\}$. Since $x_n$ is a minimum value for the $n^{\text{th}}$ coordinate of $f(p_i)$, we necessarily have that $f(\gamma_i) \subset R_1 \cup \mathcal{P}$ and $f(\gamma_i) \cap R_2 = \emptyset$. We also note that $R_2 \subset U(\gamma_i)$. 
     
     Now, suppose the coorientation of $f(\gamma_i)$ at $f(p_i)$ agrees with $-\frac{\partial}{\partial x_n}$. In this case, the coorientation of $f(\gamma_i)$ points into the interior of $R_2$ and thus points into the interior of $U(\gamma_i)$, making $f(p_i)$ an outer point. Next, suppose the coorientation of $f(\gamma_i)$ at $f(p_i)$ agrees with $\frac{\partial}{\partial x_n}$. In this case, the coorentation of $f(\gamma_i)$ at $f(p_i)$ points away from the interior of $R_2$ and therefore away from the interior of $U(\gamma_i)$, making $f(p_i)$ an inner point. 
\end{proof}

\begin{example}
    An example when $n=2$ can be seen in Figure \ref{fig:Outer_and_Inner_Definitions}. In Figure \ref{fig:Outer_and_Inner_Definitions_a}, a boundary component $\gamma_i$ of a compact surface is immersed into the plane. It inherits an orientation and coorientation as seen by the arrows; additionally it determines the unbounded region $U(\gamma_i)$. In Figure \ref{fig:Outer_and_Inner_Definitions_b}, part of the curve is orange and two points are highlighted. The orange refers to the portion of $f(\gamma_i)$ that is defined to be $\partial U(\gamma_i)$. It is only on this portion of the curve that we examine points to determine if the curve is inner or outer. 

    The two given points illustrate our definitions of inner and outer. The coorientation of $f(\gamma_i)$ at the point $f(p_1)$ is pointing \textit{outward} into $U(\gamma_i)$, meaning that $f(p_1)$ is an outer point. Furthermore, the coorientation of $f(\gamma_i)$ at the point $f(p_2)$ is \textit{not} pointing into $U(\gamma_i)$, meaning that $f(p_2)$ is an inner point. Since there exists an inner point on $\partial U(\gamma_i)$, the entire curve $f(\gamma_i)$ is an inner curve. 
\end{example}

\begin{figure}[H]
    \begin{subfigure}{.4\textwidth}
        \centering 
        \begin{tikzpicture}[scale=1]
            %%% Lower Loop
            \draw[ultra thick] (-.5,0) .. controls (-1.5,-1.5) and (2.5,-1.5) .. (1.5,0);
            \draw[ultra thick] (-.5,0) to [bend left=45] (0,0);
            %%% Top Right Loop
            \draw[ultra thick] (1.5,0) .. controls (1,1) and (2,1) .. (1.5,0);
            %%% Top Left Loop
            \draw[ultra thick] (0,0) .. controls (.5,-.75) and (-.5,-.75) .. (0,0);
            %%% Connector Loop
            \draw[ultra thick] (0,0) .. controls (.5,.5) and (1,-.5) .. (1.5,0);

            %%% Coorientation Arrows
            \draw[->, >=stealth] (-.5,0) to (-.75,.25);
            \draw[->, >=stealth] (-.5,0) to +(-.5,-.5);

            %%% Labels
            \node[] (a) at (2.25, -.25) {$f(\gamma_i)$};
            \node[] (b) at (-1.5,1.5) {$U(\gamma_i)$};
            
        \end{tikzpicture}
        \caption{Immersed Oriented Boundary $f(\gamma_i)$}
        \label{fig:Outer_and_Inner_Definitions_a}
    \end{subfigure}
    \hspace{1.5cm}
    \begin{subfigure}{.4\textwidth}
        \centering 
        \begin{tikzpicture}[scale=1]
            %%% Lower Loop
            \draw[ultra thick, orange] (-.5,0) .. controls (-1.5,-1.5) and (2.5,-1.5) .. (1.5,0);
            \draw[ultra thick, orange] (-.5,0) to [bend left=45] (0,0);
            %%% Top Right Loop
            \draw[ultra thick, orange] (1.5,0) .. controls (1,1) and (2,1) .. (1.5,0);
            %%% Top Left Loop
            \draw[ultra thick] (0,0) .. controls (.5,-.75) and (-.5,-.75) .. (0,0);
            %%% Connector Loop
            \draw[ultra thick, orange] (0,0) .. controls (.5,.5) and (1,-.5) .. (1.5,0);

            %%% Coorientation Arrows
            \draw[->, >=stealth, orange] (-.5,0) to (-.75,.25);
            \draw[->, >=stealth, orange] (-.5,0) to +(-.5,-.5);

            %%% Labels
            \node[orange] (a) at (2.25, -.25) {$\partial U(\gamma_i)$};
            \node[] (b) at (-1.5,1.5) {$U(\gamma_i)$};

            %%% Points
            %Outer
            \node[shape=circle, draw=blue, fill=blue, minimum size=4pt, inner sep=0] (o) at (.25,-1.12) [label=45:\small$f(p_1)$] {};
            %Inner
            \node[shape=circle, draw=blue, fill=blue, minimum size=4pt, inner sep=0] (i) at (1.5,.75) [label=above:\small$f(p_2)$] {};

            %%% Outer Arrows
            \draw[->, >=stealth, blue] (o) to +(-.1,-.4);
            \draw[->, >=stealth, blue] (o) to +(.4,-.01);

            %%% Inner Arrows
            \draw[->, >=stealth, blue] (i) to +(0,-.4);
            \draw[->, >=stealth, blue] (i) to +(.4,-.01);

        \end{tikzpicture}
        \caption{Examples of Inner and Outer Points}
        \label{fig:Outer_and_Inner_Definitions_b}
    \end{subfigure}
        
        \caption{Immersed Boundaries with Coorientation and Orientation}
        \label{fig:Outer_and_Inner_Definitions}
    \end{figure} 

\subsection{The Gauss Map and the Winding Number}

Given an orientation preserving immersion $f\co S\to \R^n$ of a compact oriented manifold $S$ of dimension $n$, we have seen that the boundary $\partial S$ inherits a canonical orientation, while the immersed boundary $f(\partial S)$ inherits a canonical coorientation. Let $\nu\co \partial S\to S^{n-1}$ be a coorienting unit vector field over $f(\partial S)$. We say that $\nu$ is a \emph{Gauss map}. Since both $\partial S$ and $S^{n-1}$ are oriented and $f$ is orientation preserving, the degree of the Gauss map $\nu$ is well-defined. We emphasize that we do not assume that $\partial S$ is path-connected, and define the degree of $\nu$ as the algebraic number of preimages of a regular point of $\nu$. 
We call the degree of $\nu$ the \emph{winding number} of the immersed boundary $\partial S$ and denote it by $w(\partial S)$.   

The winding number allows us to compare invariants of a manifold to the invariants of its boundary. The following theorem is a version of the Poincar\'e-Hopf Theorem, see \cite[Section 6]{Mi65}.

\begin{theorem}  The winding number $w(\partial S)$ equals the Euler characteristic $\chi(S)$ of the manifold $S$. 
\end{theorem}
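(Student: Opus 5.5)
The plan is to reduce the statement to the Poincaré–Hopf theorem for a vector field on $S$ that points outward along $\partial S$. Since $f$ is an orientation preserving codimension zero immersion, $df$ is an isomorphism at every point of $S$. We may therefore pull back the constant vector field on $\R^n$. Choose a generic unit vector $v\in S^{n-1}$ which is a regular value of the Gauss map $\nu\colon\partial S\to S^{n-1}$, and also a regular value of $-\nu$. Since $\nu$ is defined with $\partial S$ possibly disconnected, this is still a single degree computation.

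The key construction proceeds in three steps.
\begin{enumerate}
\item Take a vector field $X$ on $S$ that agrees near $\partial S$ with the pullback $df^{-1}(\nu)$. Because $\nu$ is the image of an outward normal, this field points outward along $\partial S$.
\item Compare $X$ with the nowhere-vanishing pulled-back constant field $V=df^{-1}(v)$.
\item Apply Poincaré–Hopf in the form of \cite[Section 6]{Mi65}. For any outward-pointing field on $S$ with isolated zeros, the sum of the indices equals $\chi(S)$.
\end{enumerate}

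To produce an explicit outward field whose index sum we can read off from $\nu$, we argue as follows. Milnor's version states that the index sum of an outward field equals the degree of the Gauss map of $\partial S$ when $S$ is a domain in $\R^n$. Our $S$ is only immersed, but the argument is local near the boundary plus a global index count. Concretely, work in the trivialized tangent bundle $TS\cong S\times\R^n$ given by $df$.

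In this trivialization, any vector field $X$ becomes a map $\hat X\colon S\to\R^n$. The index sum of $X$ equals the degree of $\hat X/|\hat X|$ restricted to $\partial S$. This is the standard fact that the local degree at each zero adds up to the boundary degree: excise small balls around the zeros and use that the degree of a map from the boundary of a compact oriented manifold, which extends over the interior, is zero. For $X$ outward along $\partial S$, homotope $\hat X|_{\partial S}$ through nonvanishing fields to $\nu$. This is possible because both point strictly outward: take the straight-line homotopy, which never vanishes since both fields have positive outward component. So the degree of $\hat X/|\hat X|$ on $\partial S$ is $\deg\nu=w(\partial S)$. Poincaré–Hopf then gives $\chi(S)=w(\partial S)$.

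The main obstacle is making the degree-additivity step rigorous when $S$ is only immersed rather than embedded, and checking the orientation conventions. The point to verify is that the orientation on $\partial S$ used to define $\deg\nu$ matches the boundary orientation (outward normal first), so that no sign $(-1)^n$ appears. I would handle this by noting that the paper's convention fixes the orientation of $f(\partial S)$ so that coorientation followed by orientation is the standard orientation, which is exactly the outward-normal-first boundary convention. The degree identity itself follows from Stokes' theorem applied to the pullback of the volume form of $S^{n-1}$ under $\hat X/|\hat X|$ on the complement of the zeros.
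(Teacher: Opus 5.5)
Your argument is correct and follows the route the paper points to. The paper gives no proof and only cites Poincaré–Hopf from Milnor's Section 6 (and later Haefliger); your proof is Milnor's Hopf-lemma excision argument, adapted to the immersed case by trivializing $TS$ via $df$, which is the adaptation that citation needs. The regular value $v$ and the comparison field $V=df^{-1}(v)$ are never used in your final argument and can be dropped.
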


\section{The Whitney Formula}\label{TheWhitneyFormula} 
    The Whitney formula~\cite{Wh37} allows us to relate the winding number of a closed, connected, oriented, immersed curve $\gamma$ in $\R^2$ with its self-intersections. We will parameterize $\gamma$ as $\gamma(t):S^1\to\R^2$ by defining $S^1$ as $[0, 1]/\sim$, where $0$ and $1$ are identified. The base point, $\gamma(0)$ is chosen following the conventions of Section \ref{InnerAndOuterDefinitions}. That is, if $\gamma$ is an outer (respectively, inner) curve, then $\gamma(0)$ is an outer (respectively, inner) point. 
    
    A \emph{self-intersection point}, or \emph{double point}, of $\gamma(t)$ is defined to be a pair $0<t_1< t_2<1$ such that $\gamma(t_1)=\gamma(t_2)$. We emphasize that a generic curve $\gamma$ is self-transverse, and has no triple self-intersection points, i.e., there are no distinct triples of points $t_1, t_2$ and $t_3$ such that $\gamma(t_1)=\gamma(t_2)=\gamma(t_3)$. We say that a self-intersection point $x=\gamma(t_1)=\gamma(t_2)$ is \emph{positive} if the ordered pair of vectors $(\dot\gamma(t_1), \dot\gamma(t_2))$ forms a positive basis for $T_x\R^2$. Otherwise the self-intersection point is \emph{negative}. Note, the sign of a self-intersection point of $\gamma$ essentially depends on the orientation of $\gamma$ as well as on the base point $\gamma(0)$. 

    Let $n^+=n^+(\gamma)$ denote the number of positive self-intersection points of $\gamma$, and $n^-=n^-(\gamma)$ the number of negative self-intersection points. Additionally, let $i^+=i^+(\gamma)$ equal 1 if $\gamma$ is an outer curve and 0 if $\gamma$ is an inner curve. Similarly, let $i^-=i^-(\gamma)$ equal 1 if $\gamma$ is an inner curve and 0 if $\gamma$ is an outer curve. Lastly, let $w(\gamma)$ denote the winding number of $\gamma$. With this notation, we may finally state the Whitney formula.
    
    \begin{theorem}[Whitney \cite{Wh37}]
        If $\gamma$ is a 1-dimensional, closed, connected, oriented, immersed curve with $n^+$ positive self-intersection points and $n^-$ negative self-intersection points, then the winding number of $\gamma$ is $w(\gamma)=i^+-i^-+n^--n^+$.
    \end{theorem}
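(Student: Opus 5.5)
The statement is Whitney's classical formula, and I would prove it with Whitney's secant-map (Hopf "Umlaufsatz") argument. The base point is handled using the ideas of Lemma~\ref{lemma: base point is minimum}. Parametrize $\gamma\colon[0,1]/\!\sim\ \to\R^2$ with $\gamma(0)$ the chosen reference point on $\partial U(\gamma)$.

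\textbf{Normalizing the base point.} Because $\gamma(0)\in\partial U(\gamma)$ is not a double point, there is an embedded arc from $\gamma(0)$ to infinity whose interior lies in $U(\gamma)$. There is an orientation-preserving diffeomorphism of $\R^2$, isotopic to the identity, that straightens this arc into the downward vertical ray from $\gamma(0)$. This diffeomorphism preserves the following data:
\begin{itemize}
\item the winding number, since the Gauss map changes only by a homotopy;
\item the signs of all double points, since positivity of $(\dot\gamma(t_1),\dot\gamma(t_2))$ is preserved;
\item the inner/outer type of $\gamma(0)$.
\end{itemize}
So we may assume the ray $\{\gamma(0)-s\,\partial/\partial x_2 : s>0\}$ misses $\gamma$. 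Then, exactly as in Lemma~\ref{lemma: base point is minimum}, $\gamma(0)$ is outer if and only if the coorientation there is $-\partial/\partial x_2$. With the paper's orientation convention (coorientation followed by orientation is positive), this fixes $\dot\gamma(0)$ to be a definite horizontal direction, say $\pm\partial/\partial x_1$ according to whether $\gamma$ is outer or inner. Since the Gauss map is the tangent map rotated by a fixed angle, $w(\gamma)$ equals the rotation number of $\dot\gamma$.

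\textbf{The secant map.} On the triangle $T=\{0\le s\le t\le 1\}$, define $F(s,t)$ as follows:
\begin{itemize}
\item $F(s,t)=(\gamma(t)-\gamma(s))/|\gamma(t)-\gamma(s)|$ for $s<t$ with $(s,t)\neq(0,1)$;
\item $F(s,s)=\dot\gamma(s)/|\dot\gamma(s)|$ on the diagonal;
\item $F(0,1)=-\dot\gamma(0)/|\dot\gamma(0)|$.
\end{itemize}
This $F$ is continuous except at the finitely many interior points $(t_1,t_2)$ with $\gamma(t_1)=\gamma(t_2)$, i.e.\ the double points. Going around $\partial T$, the degree of $F|_{\partial T}$ equals the sum of the local degrees of $F$ around the small circles about the punctures.

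\textbf{Boundary contributions.} The diagonal contributes the rotation number $w(\gamma)$. Along the edge $s=0$, the direction from $\gamma(0)$ to $\gamma(t)$ never equals $-\partial/\partial x_2$, by the normalization. So $F$ moves from $\dot\gamma(0)$ to $-\dot\gamma(0)$ through a half-circle avoiding straight down, turning by $\pm\pi$. The edge $t=1$ does the same by the symmetric argument. Together these give a correction of exactly $\pm1$, with sign $i^+-i^-$ once orientations are tracked (the reversal comes from traversing $\partial T$ counterclockwise).

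\textbf{Local contributions.} Near a double point $(t_1,t_2)$, linearization gives $\gamma(t)-\gamma(s)\approx (t-t_2)\dot\gamma(t_2)-(s-t_1)\dot\gamma(t_1)$. This is a linear map of $(s,t)$ whose determinant has the sign opposite to that of the basis $(\dot\gamma(t_1),\dot\gamma(t_2))$. So positive crossings contribute $-1$ and negative crossings $+1$. Rearranging yields $w(\gamma)=i^+-i^-+n^--n^+$.

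\textbf{Main obstacle.} The main difficulty is the sign bookkeeping: tying the $\pm\pi$ turning on the two edges to the inner/outer dichotomy via the coorientation/orientation convention, and fixing the orientation of $\partial T$ consistently with the local degree computation. I would check all signs on the two test cases of a positively oriented embedded circle (outer, $w=1$) and a figure eight ($w=0$, one crossing), and adjust conventions until both agree.
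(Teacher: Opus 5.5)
The paper does not prove this statement. It quotes Whitney's formula from \cite{Wh37}, remarking only that its crossing signs are opposite to Whitney's. So there is no in-paper argument to compare with, and your secant-map (Hopf Umlaufsatz) argument has to stand on its own.

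It does stand: the proof is correct. The sign bookkeeping you flagged as the main obstacle closes up with the paper's conventions exactly as they are, so nothing needs to be ``adjusted''. Here is the check.
\begin{itemize}
\item Traversing $\partial T$ counterclockwise means going $(0,0)\to(1,1)\to(0,1)\to(0,0)$, and the diagonal contributes $w(\gamma)$.
\item Since $F(s,1)=-F(0,s)$, the edge $t=1$ (traversed with $s$ decreasing) and the edge $s=0$ (traversed with $t$ decreasing) each contribute $-\theta/(2\pi)$. Here $\theta$ is the total turning of $t\mapsto F(0,t)$ on $[0,1]$.
\item The convention that coorientation followed by orientation is positive forces $\nu=(\dot\gamma_2,-\dot\gamma_1)$. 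Hence $\langle \nu,-\partial/\partial x_2\rangle=\dot\gamma_1(0)$, so $\gamma(0)$ is outer if and only if $\dot\gamma(0)$ has positive $x_1$-component.
\item Because $F(0,\cdot)$ never points straight down, this gives $\theta=\pi(i^+-i^-)$.
\item Therefore $\deg F|_{\partial T}=w(\gamma)-(i^+-i^-)$. Equating this with your local count $n^--n^+$ yields exactly the stated formula.
\end{itemize}
On your figure-eight test: it is an inner curve in the paper's sense, since one lobe runs clockwise. Its base point must therefore sit on the clockwise lobe, and the crossing is then negative, giving $0=0-1+1-0$.

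Two small corrections. First, straightening the arc does not make $\dot\gamma(0)$ horizontal. What you actually need is that the arc leaves $\gamma(0)$ transversally, which you can arrange when choosing it. Then $\dot\gamma(0)$ is not vertical, and the sign of its $x_1$-component detects outer versus inner, as above. Second, the existence of the straightening diffeomorphism is standard but deserves a citation or a sentence: a properly embedded ray in $\R^2$ is ambiently isotopic to a straight one.

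Note also that your argument never uses that the base point has the same type as the curve. It proves the formula for any non-double base point on $\partial U(\gamma)$, with $i^\pm$ read off from the base point. The paper's convention of choosing a base point of the curve's own type only serves to make $i^\pm$ an invariant of $\gamma$.
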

    We comment that in Whitney's paper \cite{Wh37}, the notions of positive and negative for self-intersections are opposite that of ours in this context. Since the winding number is additive over connected components, Whitney’s formula immediately gives the following.
    \begin{corollary}
        Let $\gamma$ be a closed, oriented, immersed curve with ordered, pointed components. Then 
        \begin{equation}\label{generalized whitney formula}
            w(\gamma)=I^+-I^-+N^--N^+,
        \end{equation}
        where $I^+$ (respectively, $I^-$) is the number of outer (respectively, inner) components and $N^+$ (respectively, $N^-$) is the number of positive (respectively, inner) self-intersection points.        
    \end{corollary}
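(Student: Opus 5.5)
This follows by summing Whitney's formula over the components of $\gamma$. Write $\gamma=\gamma_1\cup\cdots\cup\gamma_k$, where each $\gamma_i$ is a closed, connected, oriented, immersed curve carrying its chosen base point $\gamma_i(0)$, taken according to the conventions of Section \ref{InnerAndOuterDefinitions}. The winding number is the degree of the Gauss map $\nu$, defined as the algebraic count of preimages of a regular value. That count splits as a sum over components, so $w(\gamma)=\sum_{i=1}^k w(\gamma_i)$. Applying the Whitney formula to each $\gamma_i$ gives
\[
w(\gamma)=\sum_{i=1}^k\bigl(i^+(\gamma_i)-i^-(\gamma_i)+n^-(\gamma_i)-n^+(\gamma_i)\bigr).
\]
Since $i^+(\gamma_i)$ is $1$ exactly when $\gamma_i$ is outer, $\sum_i i^+(\gamma_i)=I^+$. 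Likewise $\sum_i i^-(\gamma_i)=I^-$.

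The main point needing care is the meaning of $N^\pm$. In Whitney's formula, $n^\pm(\gamma_i)$ counts only self-intersections of $\gamma_i$ with itself. I would therefore read $N^+$ (respectively $N^-$) as the total number of positive (respectively negative) self-intersection points of the individual components, that is, $N^\pm=\sum_i n^\pm(\gamma_i)$. Under this reading the displayed sum is exactly \eqref{generalized whitney formula}. (The statement says ``respectively, inner'' for $N^-$; this should be ``negative''.)

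If one instead wants $N^\pm$ to include crossings between distinct components $\gamma_i$ and $\gamma_j$, the ordering of the components must be used. Order such a crossing by $i<j$ and sign it by the pair of tangent vectors $(\dot\gamma_i,\dot\gamma_j)$. The algebraic intersection number of two closed curves in $\R^2$ is zero, so these mixed crossings come in equal numbers of positive and negative points. Their contribution to $N^- - N^+$ is then $0$, and the formula still holds.

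The only other point to check is that base points and signs are well defined component by component. This is guaranteed by fixing a pointed, ordered collection as in the statement.
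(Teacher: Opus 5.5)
Your proposal is correct and is the paper's argument: the winding number (the degree of the Gauss map) is additive over components, so summing Whitney's formula component by component gives the identity. Your reading $N^\pm=\sum_i n^\pm(\gamma_i)$, with ``inner'' a typo for ``negative'', matches the definition the paper gives immediately afterward, and your remark that mixed crossings cancel algebraically agrees with the paper's observation $N^+_{ij}=N^-_{ij}$.
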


    For non-connected curves, we will distinguish between the intersection points of different connected components, and the self-intersection points connected components. Let $\gamma$ be a curve with $k$ connected components, labeled $\gamma_1$, $\ldots$, $\gamma_k$. We define an intersection point between $\gamma_i$ and $\gamma_j$, with $i < j$ to be \emph{positive} if the orientation of $\gamma_i$ followed by the orientation of $\gamma_j$ agrees with the standard basis of $\R^2$. Otherwise, we say an intersection point between $\gamma_i$ and $\gamma_j$ is \emph{negative}. We denote the number of positive (respectively, negative) intersection points between different connected components $\gamma_i$ and $\gamma_j$ with $i<j$ by $N^+_{ij}$ (respectively, $N^-_{ij}$). We note that the algebraic number of intersection points between different components is zero, i.e., $N^+_{ij}=N^-_{ij}$. Additionally, we denote the total number of positive (respectively, negative) self-intersection points across all $\gamma_i$ by $N^+=\sum_{i}n^+(\gamma_{i})$ (respectively, $N^-=\sum_{i}n^-(\gamma_{i})$).

    \begin{table}[H]\renewcommand{\arraystretch}{1.5}
        \begin{tabular}{c|c|c|c|c|c}
        Curve $\gamma$ & $w(\gamma)$ & $i^+(\gamma)$ & $i^-(\gamma)$ & $n^-(\gamma)$ & $n^+(\gamma)$ \\[2.5pt] \hline 
        $A_k^-$        & $k-1$       & $0$           & $1$           & $k$           & $0$           \\[2.5pt] \hline 
        $A_k^+$        & $1-k$       & $0$           & $1$           & $1$           & $k-1$         \\[2.5pt] \hline 
        $B_{k,1}^-$    & $k-2$       & $0$           & $1$           & $k$           & $1$           \\[2.5pt] \hline 
        $B_{k,1}^+$    & $2-k$       & $1$           & $0$           & $1$           & $k$           \\[2.5pt] \hline 
        $C_k^-$        & $k+1$       & $1$           & $0$           & $k$           & $0$           \\[2.5pt] \hline 
        $C_k^+$        & $-k-1$      & $0$           & $1$           & $0$           & $k$           \\[2.5pt]
        \end{tabular}
        \caption{Invariants of key families of curves}
        \label{table:invariants of curves}
    \end{table}

    \begin{example}\label{ex: ABC curves} We highlight six families of curves and their invariants. These invariants are shown in Table \ref{table:invariants of curves}. Four specific curves when $k=3$ are shown in Figure \ref{fig:1}. These families will be utilized to show sharp lower bounds on estimates later in the paper. The first two families are denoted $A_k^-$ and, by reversing the (co)orientation, $A_k^+$. We note that the choices of basepoints of $A_k^-$ and $A_k^+$ do not agree. In particular, it is not true that $n^\pm(A_k^-)=n^\mp(A_k^+)$. The next two families are denoted $C_k^-$ and, by reversing the (co)orientation, $C_k^+$. Now, we choose the basepoints of $C_k^-$ and $C_k^+$ so that the basepoints agree. We denote the last two families by $B_{k,1}^-$ and, by reversing the (co)orientation, $B_{k,1}^+$. Again, we choose the basepoints of $B_{k,1}^-$ and $B_{k,1}^+$ so that the basepoints agree.
    \end{example}

    \begin{figure}[H]
        \centering
        \includegraphics[height=1.3in]{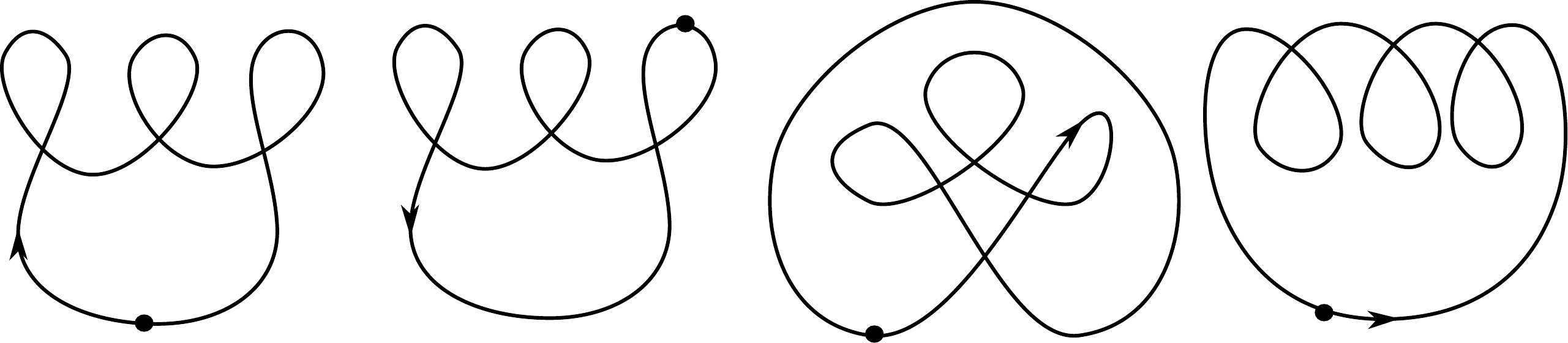}
        \caption{Curves $A_3^-$, $A_3^+$, $B_{3,1}^+$ and $C_3^-$.}
        \label{fig:1}
    \end{figure}

    \begin{remark}\label{B curves are important}
        We highlight that the family of curves $B^+_{k,1}$ are those Guth~\cite{Gu09} constructed to show the sharpness of his result. In particular, given a surface $S$ with a single boundary component, Guth showed (see Theorem \ref{th:Gu}) that there exists an immersion $f$ such that $f(\partial S)$ has exactly $2g+2$ self-intersections. The curve $B^{+}_{2-\chi(S),1}$ is precisely the image $f(\partial S)$.
    \end{remark}

\section{Positive Isotopy and Concordance of Curves}\label{PositiveIsotopy}

    Recall that every oriented immersed curve $\gamma\co S^1\to \R^2$ is canonically cooriented by a vector field such that, at every point on $\gamma$, the coorientation followed by the orientation forms a positive basis for $\R^2$.

    \begin{definition}[Positive Isotopy]
        Let $\gamma_i\co S^1\to \R^2$ be two cooriented curves, for $i=0,1$. We say that $\gamma_0$ and $\gamma_1$ are \emph{positively isotopic} if there is an immersion $f\co S^1\times [0,1]\to \R^2$ such that $f|_{S^1\times \{i\}}=\gamma_i$,  and the coorientations of $\gamma_0$ and $\gamma_1$ agree with $df(\frac{\partial}{\partial t})$, where $t$ is the standard coordinate on $[0,1]$. More generally, a closed curve $\gamma_0$ is positively isotopic to a closed curve $\gamma_1$ if the path components of $\gamma_0$ are in bijective correspondence with path components of $\gamma_1$ and the path components of $\gamma_0$ are positively isotopic to the corresponding path components of $\gamma_1$. If a curve $\gamma_i$ is positively isotopic to a curve $\gamma_j$, then we will write $\gamma_i \Rightarrow \gamma_j$.
    \end{definition}

    \begin{remark}
        There is a natural contact-geometric interpretation of positive isotopy. A cooriented immersed curve $\gamma\subset\R^2$ has a Legendrian conormal lift $\Lambda_\gamma\subset \text{ST}^{*}\R^2\cong J^1(S^1)$. Under the standard contact form, a family $\gamma_t$ is positive in our sense exactly when the lifts $\Lambda_t$ form a non-negative, respectively positive, Legendrian isotopy. Thus our positive isotopies are a restricted class of non-negative Legendrian isotopies whose wavefront projections remain immersed. Further details can be found in \cite{Ng_2005}.
    \end{remark}

    \begin{figure}[h]
        \centering
        \includegraphics[width=0.75\linewidth]{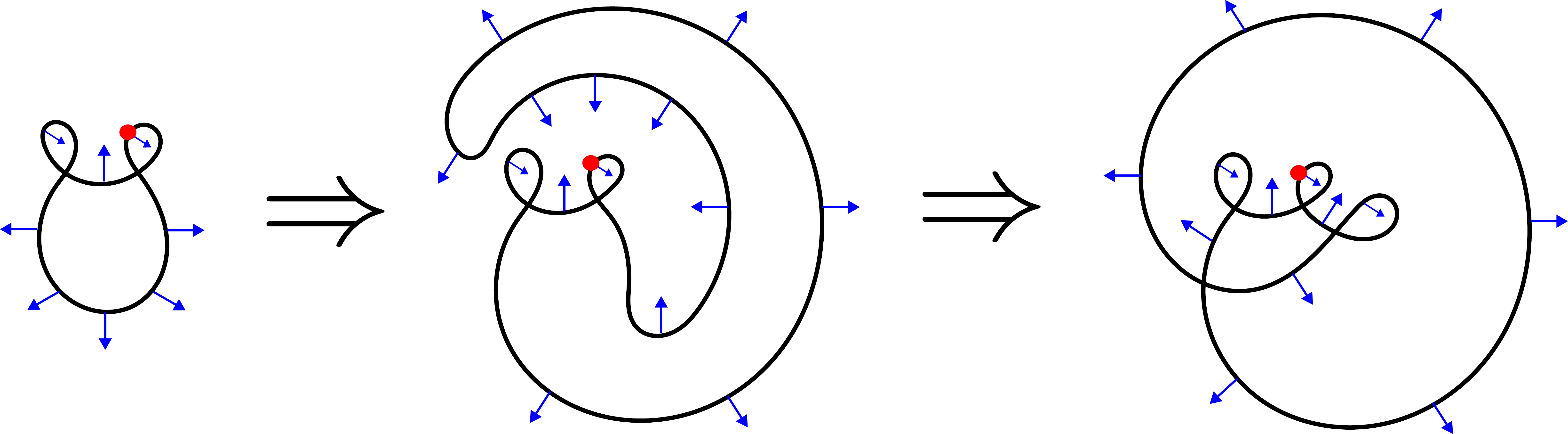}
        \caption{An inner curve that is positively isotopic to an outer curve.}
        \label{fig:c0 to b3,1}
    \end{figure}

    If $\gamma_0$ is positively isotopic to $\gamma_1$, an inner component, with respect to $\gamma_0$, can become an outer component, with respect to $\gamma_1$. An example of this is shown in Figure \ref{fig:c0 to b3,1} where the left-most curve is an inner curve, the left-most curve is positively isotopic to the right-most curve, and the right-most curve is an outer curve. Furthermore, we emphasize that the given reference point in the left-most curve is no longer a valid reference point in the right-most curve.

    \begin{lemma}\label{le:1}\label{lemma:neg_pos_iso}
     If a curve $\gamma_i$ is positively isotopic to a curve $\gamma_j$, then the curve $- \gamma_j$ is positively isotopic to the curve $- \gamma_i$, where $-\gamma$ denotes the curve obtained from $\gamma$ by reversing both orientation and coorientation.
    \end{lemma}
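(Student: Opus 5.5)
The plan is to take the given positive isotopy and reverse it in time. Suppose $F\co S^1\times[0,1]\to\R^2$ realizes $\gamma_i\Rightarrow\gamma_j$ componentwise: $F$ is an immersion, $F|_{S^1\times\{0\}}=\gamma_i$, $F|_{S^1\times\{1\}}=\gamma_j$, and the coorientations of $\gamma_i$ and $\gamma_j$ agree with $dF(\frac{\partial}{\partial t})$. Define
\[
G\co S^1\times[0,1]\to\R^2,\qquad G(s,t)=F(\bar s,1-t),
\]
where $s\mapsto\bar s$ is the orientation-reversing reflection of $S^1=[0,1]/\sim$. Since $G$ is $F$ precomposed with a diffeomorphism of $S^1\times[0,1]$, it is again an immersion. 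Moreover $G|_{S^1\times\{0\}}$ is $\gamma_j$ traversed backwards, which is $-\gamma_j$ as an oriented curve, and $G|_{S^1\times\{1\}}$ is $-\gamma_i$.

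Next I check the coorientation condition, which is the only real content. By the chain rule,
\[
dG\Bigl(\tfrac{\partial}{\partial t}\Bigr)=-\,dF\Bigl(\tfrac{\partial}{\partial t}\Bigr).
\]
At $t=0$ this is minus the coorientation of $\gamma_j$, which is exactly the coorientation of $-\gamma_j$, since $-\gamma_j$ carries the reversed coorientation. The same computation at $t=1$ shows that $dG(\frac{\partial}{\partial t})$ is the coorientation of $-\gamma_i$.

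There is also a compatibility check with the paper's convention that an oriented curve is canonically cooriented so that the coorientation followed by the orientation is a positive basis. Reversing both the orientation and the coorientation negates both vectors, so the basis stays positive. Hence $-\gamma$ is consistently cooriented, and there is no conflict between specifying $-\gamma$ by its orientation and by its coorientation.

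For disconnected curves, I apply this argument to each pair of corresponding path components separately, using the same bijection of components. The main (mild) obstacle is bookkeeping, namely making sure the reflection $\bar s$ gives the reversed orientation without disturbing the immersion property. Since $G$ differs from $F$ only by a diffeomorphism of the domain, this is immediate.
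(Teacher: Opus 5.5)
Your proof is correct: precomposing the given immersion with the diffeomorphism $(s,t)\mapsto(\bar s,1-t)$ of the cylinder yields an immersion from $-\gamma_j$ to $-\gamma_i$ with $dG(\partial/\partial t)=-dF(\partial/\partial t)$, which is exactly the reversed coorientation. The paper omits this proof as straightforward, and your time-reversal-plus-reflection argument (including the componentwise treatment and the consistency check with the coorientation convention) is the routine verification it has in mind.
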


    The proof of Lemma \ref{lemma:neg_pos_iso} is straightforward, so we omit it. In Section \ref{section:MainTheorems} we shall use the following elementary isotopies between the standard families of curves introduced in Example \ref{ex: ABC curves}.

    \begin{example}\label{Positive_Isotopy_Contrapositive}  Two examples using Lemma \ref{lemma:neg_pos_iso}:
        \begin{itemize}[leftmargin=*]
           \item The curve $A_k^-$ is positively isotopic to $C^-_{k-2}$. The positive isotopy essentially consists of a Whitney type move eliminating one negative and one positive crossing. By Lemma~\ref{le:1}, the curve $C^+_{k-2}$ is positively isotopic to the curve $A_k^+$.
            \item The curve $A_k^+$ is positively isotopic to $B^+_{k+1,1}$.  To prove the latter, it is again helpful to apply Lemma~\ref{le:1}, and construct instead a positive isotopy from $B^-_{k+1,1}$ to $A_k^-$. For the picture when $k=2$, see Figure \ref{fig:c0 to b3,1}. 
        \end{itemize}
    \end{example}

    % \begin{figure}[h]
    %     \centering
    %     \includegraphics[width=0.8\linewidth]{Figures/Bk_conc.pdf}
    %     \caption{A positive concordance of $C_0^- \sqcup A_2^+$ that results in $B_{3,1}^+$}
    %     \label{fig:Bk conc}
    % \end{figure}

    \begin{definition}[Positive Concordance]
        For $i=0,1$, let $\gamma_i$ be closed curves, and let $f_i: \gamma_i \rightarrow \R^2$ be immersions. We say that $\gamma_0$ is \emph{positively concordant} to $\gamma_1$ if there is a compact, oriented surface $S$ with boundary such that $\partial S = \gamma_0 \cup \gamma_1$, and an immersion $F:S\rightarrow \R^2$ such that $F|_{\gamma_i}=f_i$, and the coorientation of $F(\gamma_{0})$ agrees with the inward normal coorientation of $\gamma_0$ in $S$ , and the coorientation of $F(\gamma_1)$ agrees with the outward normal coorientation of $\gamma_{1}$ in $S$.
    \end{definition}

    We note that if $\gamma_0$ is positively concordant to $\gamma_1$, then the number of path components of $\gamma_0$ may or may not be the same as the number of path components of $\gamma_1$. We also note that, under positive concordance, the base point of a pointed curve may or may not remain a valid base point.

    \begin{figure}[h]
        \centering
        \includegraphics[width=0.8\linewidth]{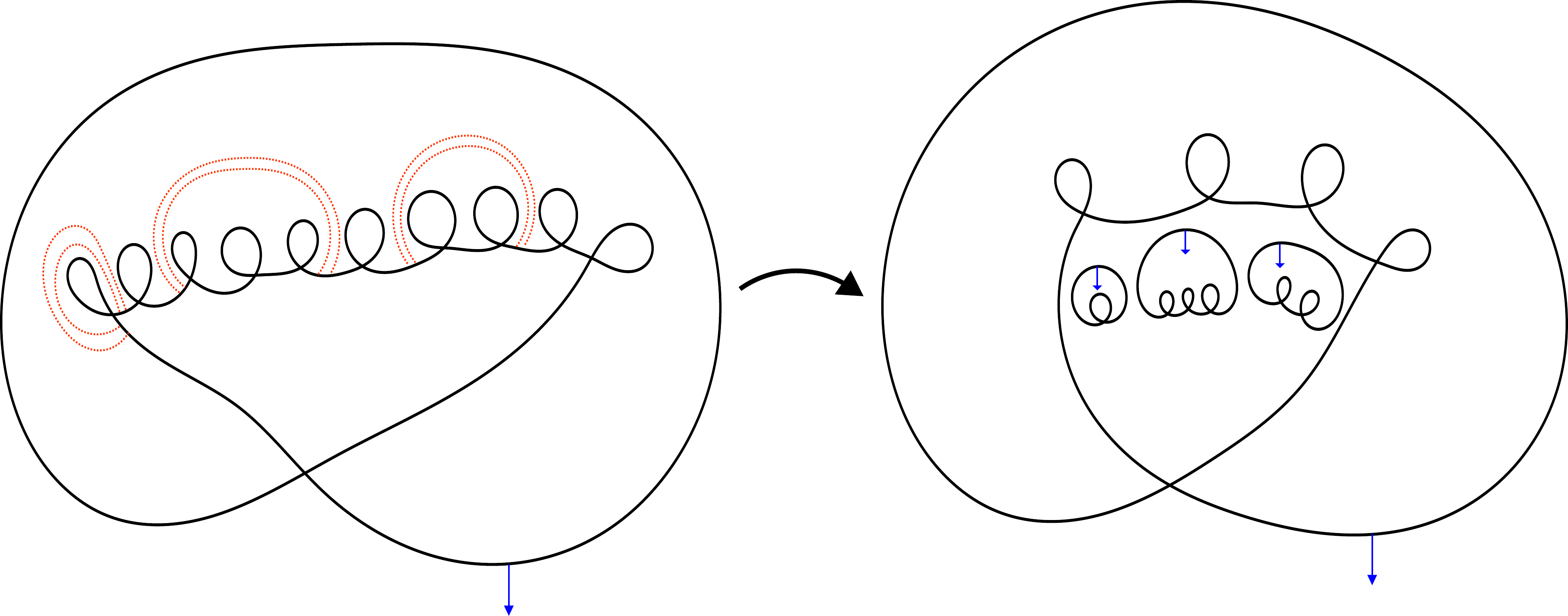}
        \caption{A positive concordance from $B^+_{10,1}$ to $B^+_{4,1} \sqcup C^+_1 \sqcup C^+_3 \sqcup C^+_2$}
        \label{fig:ConcDecomp}
    \end{figure}

    \begin{example}\label{splitting_Bs_example} For any decomposition $k=i_0+\cdots + i_s$ of a positive integer $k$ into the sum of positive numbers $i_j$, the curve $B_{k,1}^+$ is positively concordant to the disjoint union of curves $B_{i_0,1}^+ \sqcup C_{i_1}^+\sqcup \dots \sqcup C_{i_s}^+$. In Figure \ref{fig:ConcDecomp}, we see such a decomposition for $k=10$ with $i_0=4$, $i_1=1$, $i_2=3$, and $i_3=2$.  
    \end{example}

    To conclude this section, we state two useful lemmas related to positive isotopy and positive concordance whose proofs are immediate. 
    
    \begin{lemma}\label{guth_extension_lemma_1}
        Let $\gamma_0$ be a cooriented closed immersed curve in $\R^2$. Let $S$ be a compact surface of genus $g$ with non-empty boundary and let $f_{0}: S\to\R^2$ be an immersion such that $f_0(\partial S)=\gamma_0$. Suppose the coorientation of $\gamma_0$ agrees with $df_0(v)$, where $v$ is a vector field in $TS|_{\partial S}$ directed into the interior of $S$. If a closed cooriented curve $\gamma_1$ is positively isotopic to $\gamma_0$, then there is an immersion $f_1\co S\to \R^2$ such that $f_{1}|_{\partial F}$ is a parametrization of $\gamma_1$, and the coorientation of $\gamma_1$ agrees with $df_1(v)$.
    \end{lemma}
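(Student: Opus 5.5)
The plan is to build $f_1$ by gluing a collar onto $S$. Let $F\co S^1\times[0,1]\to\R^2$ be the positive isotopy; after reindexing, take it from $\gamma_0$ to $\gamma_1$, so that $\gamma_1$ is reached from $\gamma_0$ by moving in the direction of the coorientation. (If the lemma intends the opposite direction, the same argument runs with the isotopy reversed, using Lemma~\ref{lemma:neg_pos_iso}.) Here $F|_{S^1\times\{0\}}=\gamma_0$, $F|_{S^1\times\{1\}}=\gamma_1$, and $dF(\partial/\partial t)$ agrees with the coorientations at both ends. When $\partial S$ is disconnected, we apply this componentwise, using the bijection of path components built into the definition of positive isotopy.

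The key steps, in order, are as follows.

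First, note that $S$ is diffeomorphic to $S' = S\setminus(\partial S\times[0,1))$, where $\partial S\times[0,1)$ is an open inner collar. Equivalently, $S$ is diffeomorphic to $S\cup_{\partial S}(\partial S\times[0,1])$, with the collar attached along $\partial S\times\{0\}$. Parametrize this collar so that $t$ increases into the interior of $S$.

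Second, define a map on the enlarged surface. On the original copy of $S$ it is $f_0$ (pushed slightly), and on the collar it is $F$ composed with the reparametrization $\partial S\cong S^1$ coming from $f_0|_{\partial S}=\gamma_0$. Along the seam, the coorientation of $\gamma_0$ equals $df_0(v)$ with $v$ pointing into $S$, and it also equals $dF(\partial_t)$. So the two pieces fit together as immersions whose normal directions match. After smoothing them by a standard collar reparametrization, they glue to a smooth immersion.

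Third, check orientations. $F$ is orientation preserving on the collar precisely because $\partial_t$ maps to the coorientation, and the coorientation followed by the orientation is a positive basis. The glued map is therefore an orientation preserving immersion of a surface diffeomorphic to $S$, and its free boundary is $\partial S\times\{1\}$, which maps to $\gamma_1$. The inward vector field there is $-\partial_t$, so the sign check must be done carefully; if it comes out opposite, we use the isotopy in the other direction, which is exactly what the hypothesis $\gamma_1\Rightarrow\gamma_0$ provides. Concretely, the collar is attached outside $S$ so that $\gamma_1$ sits at the outer edge and the inward normal there maps to $-dF(\partial_t)$ reversed appropriately. Composing with the diffeomorphism back to $S$ gives the required $f_1$.

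The main obstacle is this direction and sign bookkeeping: deciding which end of the collar is glued to $\partial S$, so that both the smooth gluing condition at the seam and the coorientation condition at the new boundary hold simultaneously. I would settle it first by drawing the case of an embedded disk being expanded outward.
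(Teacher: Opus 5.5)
The paper gives no proof of this lemma (it calls it immediate). Gluing on a collar swept out by the isotopy is the natural argument, but your execution runs in the wrong direction, and the sign check you postpone is exactly where it breaks.

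The relation $\gamma_1\Rightarrow\gamma_0$ is directional. Reparametrizing by $t\mapsto 1-t$ gives an immersed annulus from $\gamma_0$ to $\gamma_1$ on which $dF(\partial_t)$ is \emph{opposite} to the coorientations, so it is not a positive isotopy $\gamma_0\Rightarrow\gamma_1$. Lemma~\ref{lemma:neg_pos_iso} does not rescue this, because it also reverses the coorientation of $\gamma_0$, which would then point out of $S$.

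With the data you actually use ($F(\cdot,0)=\gamma_0$, $F(\cdot,1)=\gamma_1$, $dF(\partial_t)$ equal to the coorientations, seam at $t=0$), both required properties fail:
\begin{itemize}
\item At the seam, $\partial_t$ points out of $S$ while $v$ points into $S$. So $dF(\partial_t)=df_0(v)$ says that $S$ and the collar are mapped to the \emph{same} side of $\gamma_0$. The glued map folds along the seam; it is not an immersion.
\item At the free end, the inward vector is $-\partial_t$, whose image is minus the coorientation of $\gamma_1$.
\end{itemize}

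Your remark that the argument runs in either direction is also false. If $\gamma_0\Rightarrow\gamma_1$, the curve is pushed into $f_0(S)$. Off the curves one would then need $\#f_1^{-1}(x)=\#f_0^{-1}(x)-\#F^{-1}(x)$. So no $f_1$ can exist once the swept annulus leaves $f_0(S)$; for example, push a finger of the boundary of an embedded disc inward across the disc and out past the opposite arc.

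The fix removes the bookkeeping entirely.
\begin{itemize}
\item Take the isotopy exactly as the hypothesis provides it: $F(\cdot,0)=\gamma_1$ and $F(\cdot,1)=\gamma_0$, with $dF(\partial_t)$ the coorientation.
\item Glue $C=S^1\times[0,1]$ to $S$ by identifying $S^1\times\{1\}$ with $\partial S$ through the two parametrizations of $\gamma_0$.
\item At the seam, $\partial_t$ now points from $C$ into $S$, just like $v$. Both $dF(\partial_t)$ and $df_0(v)$ are the coorientation of $\gamma_0$, so $C$ and $S$ land on opposite sides of $\gamma_0$.
\item Pull both maps back through the normal band $\Phi(s,u)=\gamma_0(s)+u\,\nu(s)$, with $u\le 0$ on $C$ and $u\ge 0$ on $S$. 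This gives a smooth structure on $S\cup C\cong S$ for which $f_0\cup F$ is an immersion.
\item The free boundary $S^1\times\{0\}$ maps to $\gamma_1$, and its inward vector $\partial_t$ maps to the coorientation of $\gamma_1$.
\end{itemize}
So the seam condition and the boundary condition hold simultaneously and automatically. There is no tension between them to resolve, and the opposite direction is neither needed nor true.
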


    \begin{lemma}\label{l:7}\label{guth_extension_lemma_2} 
        Let $\gamma_0$ be a cooriented closed immersed curve in $\R^2$. Let $S$ be a compact surface of genus $g$ with non-empty boundary and let $f_{0}: S\to\R^2$ be an immersion such that $f_0(\partial S)=\gamma_0$. Suppose the coorientation of $\gamma_0$ agrees with $df_0(v)$, where $v$ is a vector field in $TS|_{\partial S}$ directed into the interior of $S$. Let $x$ and $y$ be two points in $\partial S$, and $I$ be an immersed curve in $\R^2$ parameterized by $t\in [0,1]$ from $f_0(x)$ to $f_0(y)$ such that $\dot I(0)=df_0(v(x))$ and $\dot I(1)=-df_0(v(y))$. Let $F$ denote a surface obtained from $S$ by attaching a handle of index $1$ with attaching sphere $S^0=\{x, y\}$. Then $\gamma_0$ is positively concordant to a closed cooriented curve $\gamma_1$ such that there is an immersion $f_1\co F\to \R^2$ such that $f_{1}|_{\partial S}$ is a parametrization of $\gamma_1$, with $f_1|_{\partial F}=\gamma_1$, where the coorientation of $\gamma_1$ agrees with $df_1(u)$, where $u$ is a vector field in $TF|_{\partial F}$ directed into the interior of $F$.
    \end{lemma}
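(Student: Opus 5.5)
The plan is to build $f_1$ by gluing an immersed band, a thickening of the arc $I$, onto $f_0$. The concordance is then read off from a collar of $\partial S$ together with this band.

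First, thicken $I$. Since $I$ is an immersed arc in $\R^2$, it has a unit normal field $n(t)$, and for small $\varepsilon>0$ the map
\[
h\co [0,1]\times[-\varepsilon,\varepsilon]\to\R^2,\qquad h(t,s)=I(t)+s\,n(t),
\]
is an immersion, because its differential at $s=0$ has columns $\dot I(t)$ and $n(t)$. After possibly replacing $n$ by $-n$, we may assume $h$ is orientation preserving.

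Next, match the ends of the band with $f_0$. Choose a collar $c\co\partial S\times[0,\delta)\to S$ whose inward direction at $\partial S$ is $v$, and small arcs $J_x\ni x$ and $J_y\ni y$ in $\partial S$. The hypothesis $\dot I(0)=df_0(v(x))$ says the band leaves $f_0(x)$ tangent to the collar direction at $x$. After a preliminary reparametrization of $I$ near $t=0$, we can arrange that $h(t,s)=f_0(c(\sigma(s),t))$ for small $t$, where $\sigma$ parametrizes $J_x$. The analogous statement holds at $y$ with the reversed direction, since $\dot I(1)=-df_0(v(y))$.

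The sign asymmetry between the two endpoint conditions is exactly what makes the handle $D^1\times D^1$, attached along $\{0,1\}\times[-\varepsilon,\varepsilon]\cong J_x\sqcup J_y$, produce an oriented surface $F$ on which $h$ and $f_0$ induce the same orientation. So $f_0\cup h$ descends to a map $f_1\co F\to\R^2$. It is a local diffeomorphism away from the gluing arcs. On a neighborhood of the gluing arcs it agrees with $f_0\circ c$, so it is an immersion there as well. Finally, round the corners of $F$ at $\partial J_x$ and $\partial J_y$. Rounding corners of an immersed region keeps it an immersed region, since it is a local operation in the domain, so $f_1$ stays an immersion on the smoothed $F$.

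For the concordance, set
\[
W=c\bigl(\partial S\times[0,\delta/2]\bigr)\cup H,
\]
a compact oriented surface with $\partial W=\partial S'\sqcup\partial F$, where $S'=S\setminus c(\partial S\times[0,\delta/2))$ is a shrunken copy of $S$. Since $f_0|_{\partial S'}$ is positively isotopic to $\gamma_0$ by pushing along the collar, Lemma~\ref{guth_extension_lemma_1} lets us identify the two. With $\gamma_1=f_1(\partial F)$, the restriction $f_1|_W$ then witnesses that $\gamma_0$ is positively concordant to $\gamma_1$.

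It remains to check coorientations. On the $\gamma_0$ end, the coorientation $df_0(v)$ is the inward normal of $W$. On the $\gamma_1$ end, the normal pointing into $F$ is the outward normal of $W$. Both statements hold because $W$ sits as a collar region bordering $\partial F$ on one side and $S'$ on the other. This gives the coorientation conventions required in the definition of positive concordance, and also the statement that the coorientation of $\gamma_1$ agrees with $df_1(u)$.

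The step I expect to be the main obstacle is the matching at the ends of the band: making $h$ agree exactly with $f_0\circ c$ near $J_x$ and $J_y$, with compatible orientations, and then smoothing the corners without destroying the immersion property. I would handle this by first isotoping $I$ rel endpoints so that near $t=0$ and $t=1$ it coincides with the collar rays $f_0(c(x,\cdot))$ and $f_0(c(y,\cdot))$ traversed appropriately. I would then take $n(t)$ near the ends to be $df_0$ of the tangent direction of $\partial S$. Near the ends, $h$ is then literally $f_0\circ c$ in suitable coordinates, and the remaining verification is that the tangent-direction hypotheses force the orientation signs at $x$ and $y$ to be opposite in the band coordinate, which is what orientability of $F$ needs.
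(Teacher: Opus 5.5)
The gluing step fails. You normalize the band so that $h(t,s)=f_0(c(\sigma(s),t))$ for small $t\ge 0$, so near $J_x$ the band has the same image as the collar of $S$. In $F$, a point $p$ in the interior of $J_x$ has a neighborhood made of a half-disk in $S$ and a half-disk in $H$. The map $f_0\cup h$ sends both of them onto the same half-disk $f_0(c(J_x\times[0,\eta)))$. So $f_1$ is two-to-one on every neighborhood of $p$ and folds along $J_x$. Concretely, in the coordinate $\tau$ equal to $t$ on the $S$-side and $-t$ on the $H$-side, $f_1(\sigma,\tau)=f_0(c(\sigma,|\tau|))$, so $h$ and $f_0$ induce \emph{opposite} orientations across $J_x$. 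Agreeing with the immersion $f_0\circ c$ on each side separately does not make the glued map a local diffeomorphism.

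No isotopy of $I$ rel endpoints repairs this. The side of $f_0(J_x)$ that the band enters is fixed by $\dot I(0)=df_0(v(x))$, and with $v$ pointing into $S$ that is the side already covered by $f_0(S)$. Likewise $\dot I(1)=-df_0(v(y))$ means $I$ arrives at $f_0(y)$ from the $S$-side. For $S\cup H$ to immerse, the band must leave $f_0(x)$ in the direction $-df_0(v(x))$ and return to $f_0(y)$ with velocity $df_0(v(y))$. It must then be matched with an extension of $f_0$ to a collar \emph{outside} $S$.

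Your coorientation check is reversed as well. Since $W\subset F$ is the region adjacent to $\partial F$, the normal pointing into $F$ along $\partial F$ is the \emph{inward} normal of $W$. Along $\partial S'$, the vector $df_0(v)$ points into $S'$, that is, out of $W$. So your $W$ has the normals of a positive concordance from $\gamma_1$ to $\gamma_0$, not from $\gamma_0$ to $\gamma_1$.

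With the conventions as literally stated, no better choice of $W$ fixes this. Suppose $W$ were a positive concordance from $\gamma_0$ to $\gamma_1$ and $F$ lay on the coorienting side of $\gamma_1$. Then $W\cup_{\gamma_1}F$ would be an immersed surface bounded by $\gamma_0$ on the same side as $S$. The relation $\chi=w$ used in Section~\ref{extending_guth} would then force $\chi(W)=\chi(S)-\chi(F)=1$. A collar plus a band has $\chi(W)=-1$.

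So the statement mixes conventions. The reading under which it is the ``immediate'' fact the paper intends uses the outward coorientation of Section~\ref{CodimZeroImm}, i.e.\ $v$ and $u$ are outward normals. This is also how it is used in the remark after Theorem~\ref{th:2}, where the concordant curves bound the \emph{larger} surface. Under that reading your band-plus-collar strategy is the right one. The band lies on the far side of $\gamma_0$, and $f_1$ is a local diffeomorphism across $J_x$ and $J_y$. Take $W$ to be an outer collar of $\partial S$ plus the band, so that its first end is $\gamma_0$ itself rather than a pushed-in copy. Then $W$ lies on the coorienting side of $\gamma_0$ and $\gamma_1$ is cooriented out of $W$. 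As written, though, your proof claims an immersion and two normal directions that are false under the hypotheses you actually use.
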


\section{An extension of the Guth theorem/Pinon theorem}\label{extending_guth}

    In an unpublished preprint, Guth determines the minimal number of self-intersection points of the boundary of an immersed surface. These immersions are considered \emph{normal}, if they are smooth, generic maps, such that the double points of the immersed boundary intersect transversely. 

    \begin{theorem}[Guth \cite{Gu09}]\label{th:Gu} Suppose $S$ is a compact, oriented surface of genus $g\geq 1$, with a single boundary component. If $f$ is a normal immersion $f\co S\to \R^2$, then $f(\partial S)$ has at least $2g+2$ self-intersections. Moreover, for each $g$, there exists an immersion $f_{g}$ such that $f_g(\partial S)$ has exactly $2g+2$ self-intersections.
    \end{theorem}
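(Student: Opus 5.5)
The plan is to combine the Poincar\'e--Hopf identity $w(\partial S)=\chi(S)$ with Whitney's formula, and then supply an extra geometric argument that gains the two missing crossings. Since $S$ has genus $g$ and one boundary component, $w(f(\partial S))=\chi(S)=1-2g$. Choose the base point as in Section \ref{InnerAndOuterDefinitions}. Whitney's formula then gives
\[
n^+-n^- \;=\; i^+-i^- -(1-2g),
\]
so $n^+-n^-=2g$ if $f(\partial S)$ is outer, and $n^+-n^-=2g-2$ if it is inner. The total number of self-intersections is $n^++n^-=(n^+-n^-)+2n^-$. It therefore suffices to prove:
\begin{itemize}
\item[(a)] if the curve is outer, then $n^-\ge 1$;
\item[(b)] if the curve is inner, then $n^-\ge 2$.
\end{itemize}
Both reduce to the claim that a curve with too few negative crossings cannot bound an immersed surface of positive genus.

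For the key step I would argue by contradiction using a cut-and-paste decomposition of $S$, in the spirit of Blank's classification of immersed disks. Suppose first that the curve is outer and $n^-=0$. Then traversing $f(\partial S)$ from the outer base point, every crossing is positive. I expect this to force the curve to be a ``positive spiral'' type curve, positively isotopic to a curve all of whose loops are nested consistently, like $C^+_k$ or a curve obtained from it by Whitney moves that create no negative crossings. For such a curve, one can check directly that every immersed surface it bounds is planar. To do this I would draw generic rays from each bounded complementary region to infinity, pull them back to $S$, and observe that they cut $S$ into disks. Counting vertices, edges and faces then gives $\chi(S)\ge 1$ unless a negative crossing is available to create a handle; this contradicts $g\ge 1$.

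In the inner case the same cutting argument, applied after resolving the unique negative crossing when $n^-=1$, should reduce to the outer case with $n^-=0$, possibly after a positive isotopy of the type in Example \ref{Positive_Isotopy_Contrapositive}, via Lemma \ref{guth_extension_lemma_1}. This again yields a contradiction. I expect this step, rigorously controlling which immersed surfaces a curve with $n^-\le 1$ can bound, to be the main obstacle. The first thing I would try is an induction on the number of crossings: remove an innermost monogon loop and track the effect on the pulled-back decomposition of $S$.

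For sharpness I would use Remark \ref{B curves are important}. The curve $B^+_{2g+1,1}$ is outer, with $n^-=1$ and $n^+=2g+1$, so it has exactly $2g+2$ self-intersections and winding number $1-2g=\chi(S)$, as required. To realize it as $f_g(\partial S)$, I would start from an embedded disk and successively attach $2g$ index-one handles via Lemma \ref{guth_extension_lemma_2}, each attachment changing the boundary by a positive concordance. I would then use positive isotopies, namely $A_k^+\Rightarrow B^+_{k+1,1}$ from Example \ref{Positive_Isotopy_Contrapositive}, together with Lemma \ref{guth_extension_lemma_1}, to straighten the resulting boundary curve into $B^+_{2g+1,1}$ while keeping the number of components equal to one. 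This produces the immersion $f_g$ with exactly $2g+2$ self-intersections.
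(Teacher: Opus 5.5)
Your reduction is correct: with $w=\chi(S)=1-2g$, Whitney's formula gives $n^+-n^-=2g$ (outer) or $2g-2$ (inner), so the bound follows from (a) and (b). But (a) and (b) are the entire content of the lower bound, and the proposal proves neither.

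For (b) you are missing a simple observation: a single boundary curve is automatically outer (Lemma \ref{OnlyOneMeansOuter}). Since $S$ is compact, the count $\#f^{-1}(y)$ is locally constant on $\R^2\setminus f(\partial S)$. It equals $0$ on the unbounded component $U$, and it increases by one when you cross a branch of $f(\partial S)$ against its coorientation, because the surface lies on the side opposite the coorientation. An inner point of $\partial U$ would put $S$ locally on the $U$ side, making the count positive in $U$. So case (b) never occurs, and the crossing-resolution argument you sketch for it is unnecessary.

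For (a), the ``positive spiral'' classification plus ray-cutting is only a hope, and it is aimed at the wrong curves: by Table \ref{table:invariants of curves}, $C^+_k$ is inner, not outer. The missing idea is the same local count, applied at the first crossing (Lemma \ref{NegativeSelfIntAtLeast1}).

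\begin{itemize}
\item Put the base point at a point of minimal $y$-coordinate. Since $n^+=n^-+2g\ge 2$, crossings exist; let $z=\gamma(t_1)=\gamma(t_2)$, with $t_1<t_2$, be the first one reached.
\item The arc $\gamma((0,t_1))$ contains no double points, so the strip to its right (the coorientation side) stays in $U$ all the way to $z$. Hence the quadrant at $z$ to the right of the incoming first branch has count $0$.
\item If $\det(\dot\gamma(t_1),\dot\gamma(t_2))>0$, that same quadrant lies to the left of the second branch, i.e.\ on the side where $S$ is immersed. Its count is then at least $1$, a contradiction.
\end{itemize}

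Thus $n^-\ge 1$ and $n^++n^-=2g+2n^-\ge 2g+2$. In particular, an outer boundary curve with $n^-=0$ must be embedded, and then Whitney forces $g=0$; there is nothing to classify.

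For sharpness, pointing to Guth's curve $B^+_{2g+1,1}$ via Remark \ref{B curves are important} is exactly what the paper does. Your handle-by-handle construction is not needed, and as sketched it is unverified. After an odd number of handles there are at least two boundary components, so you cannot keep one component throughout. Also, the intermediate curve $A^+_{2g}$ is inner, so it cannot bound an immersed surface with one boundary component.
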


    \noindent We note that the Euler characteristic $\chi(S)$ of a surface of genus $g$ with one boundary component is $1-2g$. Thus, the estimate in the Guth theorem can also be written as $2g+2=3-\chi(S)$. 
    
    We now wish to extend this to a surface with multiple boundary components, and, to do this, we will utilize a result from Haefliger. Let $S$ be a compact surface with boundary. The surface $S$ as well as its boundary may or may not be path-connected. Let $f\co S\to \R^2$ be an immersion. Then, by \cite[Theorem 3]{Ha60}, the winding number $w$ of $f|\partial S$ satisfies the condition $\chi(S)= w(f|\partial S)$. Recall, the generalized Whitney formula in Equation \ref{generalized whitney formula}. Using Haefliger's result, we may rewrite the generalized Whitney formula as:
        \begin{equation}\label{Haefliger Result}
            \chi(S)=I^+-I^-+N^--N^+
        \end{equation}

\begin{proposition} Let $S$ be a compact, oriented, non-planar surface of genus $g$ with $k\geq 1$ boundary components and let $f\co S\to \R^2$ be a normal immersion. Then $N^+ \geq N^-$ and \ $N^+ \equiv N^-\mod 2$.
Moreover, the number
$\Delta=N^++N^-+N^+_{ij}+N^-_{ij}$ of self-intersection points of $f:S\to\R^2$ is even.
\end{proposition}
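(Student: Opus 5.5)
The plan is to read everything off the rewritten Whitney formula, Equation~(\ref{Haefliger Result}),
\[
\chi(S)=I^+-I^-+N^--N^+ ,
\]
together with an upper bound on $\chi(S)$ coming from non-planarity and a parity computation of $\chi(S)$. First, write $S=S_1\sqcup\cdots\sqcup S_m$ as the union of its path components. Let $S_c$ have genus $g_c$ and $k_c$ boundary components, so that $\sum_c k_c=k$. Then
\[
\chi(S)=\sum_c (2-2g_c-k_c).
\]
Since $S$ is non-planar, each $g_c\ge 1$, so $2-2g_c-k_c\le -k_c$ and hence $\chi(S)\le -k$. Every boundary component is either inner or outer, so $I^++I^-=k$.

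Rearranging Equation~(\ref{Haefliger Result}) gives
\[
N^+-N^-=I^+-I^--\chi(S).
\]
Using $\chi(S)\le -k$ and $I^-=k-I^+$, this yields
\[
N^+-N^-\;\ge\; I^+-I^-+k \;=\; 2I^+\;\ge\;0 .
\]
This proves $N^+\ge N^-$, and in fact the stronger bound $N^+-N^-\ge 2I^+$, which should be useful later for the lower bound.

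For the parity statement, reduce $\chi(S)=\sum_c(2-2g_c-k_c)$ modulo $2$ to get $\chi(S)\equiv k \pmod 2$. Likewise $I^+-I^-=k-2I^-\equiv k\pmod 2$. Hence $N^+-N^-=(I^+-I^-)-\chi(S)\equiv 0\pmod 2$, i.e.\ $N^+\equiv N^-\pmod 2$.

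Finally, for $\Delta$ we have $N^++N^-\equiv N^+-N^-\equiv 0\pmod 2$. By the observation in Section~\ref{TheWhitneyFormula} that the algebraic intersection number of distinct closed components vanishes, $N^+_{ij}=N^-_{ij}$, so $N^+_{ij}+N^-_{ij}=2N^+_{ij}$ is even. Therefore $\Delta$ is even.

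The only delicate point is that Equation~(\ref{Haefliger Result}) depends on the base-point conventions fixed in Section~\ref{InnerAndOuterDefinitions}. The signs of self-intersections and the inner/outer counts $I^\pm$ must be computed with those chosen reference points, and I would simply cite the corollary to Whitney's formula together with Haefliger's identity $w=\chi(S)$ to handle this. Beyond that, the argument is a direct computation, and I expect no real obstacle.
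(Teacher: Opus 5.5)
Your proposal is correct and follows the same route as the paper: rearrange Equation~(\ref{Haefliger Result}) to $N^+-N^-=I^+-I^--\chi(S)$, use non-planarity for both the sign and the parity of $N^+-N^-$, and use $N^+_{ij}=N^-_{ij}$ to get the parity of $\Delta$. The only difference is bookkeeping. The paper substitutes $\chi(S)=2-2g-k$, treating $S$ as connected, and gets $N^+-N^-=2(g+I^+-1)$ in one line. Your componentwise estimates $\chi(S)\le -k$ and $\chi(S)\equiv k \pmod 2$ also cover disconnected $S$, and they make the evenness of $N^++N^-$ explicit.
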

\begin{proof} Since $I^- = k - I^+$ and $\chi(S) = 2 - 2g - k$, 
the generalized Whitney formula, Equation \ref{Haefliger Result}, may be re-written as
        \[
            N^+-N^-=2g+2I^+-2.
        \]
Since $I^+\ge 0$ and $g\ge 1$, we deduce that $N^+\ge N^-$. Finally, we recall that $N^+_{ij}=N^-_{ij}$, and conclude that $\Delta \equiv 0 \mod 2.$        
\end{proof}

\subsection{Supporting Lemmas}
    In order to extend Guth's theorem to surfaces with multiple boundary components, we need to analyze numerous cases. These cases depend on both the number of immersed outer boundaries and whether they are embedded or not. We begin with the case where there is at least one immersed outer boundary with self-intersections. While not called an immersed outer boundary by Guth, if the surface has only one boundary component, then it must be an immersed outer boundary and have at least one negative self-intersection.

    \begin{lemma}\label{OnlyOneMeansOuter}
        If $S$ is an orientable compact surface with $1$ boundary component immersed into $\R^2$, then its immersed boundary component is outer.
    \end{lemma}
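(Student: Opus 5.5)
The plan is to show that the coorientation at an extremal point of $f(\partial S)$ must point away from the image, which is exactly the outer condition. Write $\gamma=\partial S$, a single circle. By Definition \ref{def:inner/outer surfaces}, it suffices to check that every non-double point of $\partial U(\gamma)$ is outer.

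The key observation is local. Take any point $x=f(p)\in\partial U(\gamma)$ that is not a double point. Near $p$, the immersion $f$ is a diffeomorphism from a half-disk neighborhood of $p$ in $S$ onto a neighborhood of $x$ on one side of $f(\gamma)$. The coorientation $df(\nu)$ points to the other side, the one not covered locally by this sheet.

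Next we use that $\partial S=\gamma$ to control which side lies in $U(\gamma)$. I claim $f(S)\cap U(\gamma)=\emptyset$. The set $f(S)$ is compact, and $f(S\setminus\partial S)$ is open in $\R^2$ because $f$ is an immersion, hence a local diffeomorphism on the interior. Therefore
\[
\partial f(S)\subset f(\partial S)=f(\gamma).
\]
Now $U(\gamma)$ is connected and disjoint from $f(\gamma)$, so it lies entirely in $f(S)$ or entirely in its complement. It cannot lie in $f(S)$, since $U(\gamma)$ is unbounded and $f(S)$ is compact. So $U(\gamma)\cap f(S)=\emptyset$.

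It follows that near $x$ the sheet $f(S)$ lies on the side opposite to $U(\gamma)$. Since $x\in\partial U(\gamma)$ is not a double point, only one sheet passes through $x$, so one local side of $f(\gamma)$ at $x$ meets $U(\gamma)$ and the other is covered by $f(S)$. The coorientation points away from $f(S)$, and hence into $U(\gamma)$. So $x$ is outer, and since $x$ was arbitrary, $f(\gamma)$ is outer.

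The delicate step is justifying that exactly one side of $f(\gamma)$ at $x$ meets $U(\gamma)$, with $f(S)$ on the other. This uses that $x$ is a simple point of the curve, so a small disk around $x$ is split by $f(\gamma)$ into two half-disks, each lying in a single complementary region. One half-disk is in $f(S)$, and the other must therefore be the one touching $U(\gamma)$.

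As a cross-check, one could instead use Lemma \ref{lemma: base point is minimum} at a lowest point, where the same argument shows the coorientation is $-\frac{\partial}{\partial x_2}$. However, the definition requires all points of $\partial U(\gamma)$, so the general argument above is the one to present.

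Note that this argument uses only that all of $\partial S$ is the single curve $\gamma$. With more boundary components it would fail, because $f(S)$ could then lie in $U(\gamma_i)$ for some $i$.
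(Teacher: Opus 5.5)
Your proof is correct and follows essentially the same idea as the paper's. The paper argues by contradiction that an inner point would force the unbounded region $U$ to be covered by $f(S)$, contradicting compactness; you instead prove directly that $U\cap f(S)=\emptyset$ (via $\partial f(S)\subset f(\gamma)$ and connectedness of $U$) and then read off the coorientation at each non-double point of $\partial U(\gamma)$, which also makes explicit the connectedness step the paper's one-line argument leaves implicit.
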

    \begin{proof}
        If $\gamma$ is inner, then there is an inner point $x$ on $\gamma$. This implies that the unbounded region $U$ of $\R^2\setminus \gamma$ has the property that the inverse image of any point of $U$ is non-empty, which contradicts the compactness of $S$.
    \end{proof}

    \begin{lemma}[Guth, \cite{Gu09}]\label{prop:18}\label{NegativeSelfIntAtLeast1}
        Let $f:S \rightarrow \R^2$ be an immersion of a compact, oriented surface $S$ with a single immersed boundary curve $\gamma$ such that $\gamma$ is an outer component with $n = n^+ + n^- > 0$ self-intersection points. Then, $n^- \geq 1$. In particular, the self-intersection point nearest to the specified base point is negative.
    \end{lemma}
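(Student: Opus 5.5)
The plan is to reproduce Whitney's classical ``first loop'' argument and use the immersion $f$ to fix the sign. Put the base point $\gamma(0)=f(p)$ at a point where the second coordinate attains its strict minimum. By Lemma \ref{OnlyOneMeansOuter}, $\gamma$ is outer. By Lemma \ref{lemma: base point is minimum}, the coorientation at $\gamma(0)$ is $-\frac{\partial}{\partial x_2}$. Since coorientation followed by orientation is a positive basis, $\dot\gamma(0)$ points in the $+\frac{\partial}{\partial x_1}$ direction. Locally the image of $S$ lies to the left of $\gamma$, on the side opposite the outward coorientation.

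Next, traverse $\gamma$ forward from $t=0$. Let $t_2$ be the first parameter at which $\gamma(t_2)=\gamma(t_1)$ for some $0<t_1<t_2$. This double point exists because $n>0$, and it is what I take ``nearest to the base point'' to mean. By minimality, $\gamma|_{[t_1,t_2]}$ is a simple closed loop $L$ bounding a disk $D$. The arc $\gamma|_{[0,t_1)}$ is embedded and disjoint from $L$ except at its endpoint. The base point lies outside $D$: the downward vertical ray from $\gamma(0)$ meets no point of $\gamma$, by strict minimality of the height, so it reaches the unbounded region without crossing $L$. Hence the arc $\gamma|_{[0,t_1]}$ approaches $\gamma(t_1)$ from outside $D$. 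After $t_1$, $\gamma$ runs along $L$, returns at $t_2$, and leaves transversally. A local model computation then gives the sign. If $L$ is traversed counterclockwise, so that $D$ lies to its left, then $(\dot\gamma(t_1),\dot\gamma(t_2))$ is a negative basis. For example, entering along $+x$ and returning downward gives $\det\big((1,0),(0,-1)\big)=-1$. If $L$ is clockwise, the crossing is positive. So everything reduces to showing that $L$ is counterclockwise, i.e.\ that $D$ lies on the surface side of $L$.

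This orientation claim is the main obstacle, and I would attack it with the counting function $\deg(q)=\#f^{-1}(q)$ on $\R^2\setminus\gamma$. This function is nonnegative, vanishes on the unbounded component $U(\gamma)$ by compactness of $S$ (as in the proof of Lemma \ref{OnlyOneMeansOuter}), and jumps by $+1$ when one crosses $\gamma$ from right to left. Suppose $L$ were clockwise, so that $D$ lies to the right of $L$. Then points of $D$ adjacent to $L$ would have $\deg$ one less than the adjacent points just outside $L$. I would then locate outside points near $L$ that are in, or connected without crossing $\gamma$ to, $U(\gamma)$, so that their $\deg$ is $0$. Natural candidates are points near the part of $L$ closest to the initial arc $\gamma|_{[0,t_1]}$, which starts on $\partial U(\gamma)$. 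Such points would force $\deg=-1$ inside $D$, a contradiction. Making the ``outside points in $U(\gamma)$'' step rigorous may require choosing the comparison point carefully, e.g.\ along the segment of the initial arc just before $t_1$, and tracking which side of that arc touches $U(\gamma)$ using the outer condition. If this fails, the fallback is to apply Whitney's formula to the curve obtained by excising the loop $L$, combined with the nonnegativity of $\deg$.

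Once the first crossing is shown to be negative, the inequality $n^-\geq 1$ follows immediately.
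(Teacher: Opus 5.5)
\textbf{There is a genuine gap.} Your local analysis and your reduction to ``$L$ is counterclockwise'' are correct, but that claim is false. So the step you flag as the main obstacle cannot be completed.

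The problem is which double point you choose. You take the first loop to \emph{close} (minimal $t_2$). The double point that is always negative is the first one \emph{met} along $\gamma$ (minimal $t_1$).

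A counterexample to your claim: in polar coordinates, immerse a disk as the strip $\{a(\theta)\le r\le a(\theta)+w,\ 0\le\theta\le 2\pi+\varepsilon\}$, with $a(\theta)=1+\delta\theta/2\pi$, $\delta<w<2\delta$ and $\varepsilon>0$ small. This is a strip wound slightly more than once around an annulus, with overlapping ends. Its boundary has exactly two double points:
\begin{itemize}
\item $P$, where the inner edge at $\theta=2\pi$ crosses the initial cap $\theta=0$;
\item $Q$, where the final cap crosses the outer edge at $\theta=\varepsilon$.
\end{itemize}
Starting from the lowest point, which lies on the outer edge, the curve visits $Q,P,P,Q$ in that order. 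So your $L$ is the inner edge run backwards from $\theta=2\pi$ to $\theta=0$, followed by a piece of the initial cap. It bounds the central hole and is clockwise, and the crossing at $P$ is positive: the tangents are roughly $(0,-1)$, then $(1,0)$. The hole has $\deg=0$, and every point adjacent to $L$ outside the hole lies in the strip, where $\deg=1$. So the degree-$0$ points you hope to find next to $L$ do not exist.

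Your fallback is not spelled out, and it cannot simply inherit the hypotheses. After excising $L$ the curve has rotation number $2$, so it bounds no immersed surface.

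\textbf{The repair is the paper's argument.} Let $z$ be the first double point met after leaving $f(p)$.
\begin{itemize}
\item The arc from $f(p)$ to $z$ contains no double points. So the face on its coorientation side is the same face along the whole arc, namely $U(\gamma)$, where $\deg=0$.
\item If $z$ were positive, the other strand through $z$ would have its surface side facing the quadrant on the coorientation side of that arc (the paper's region $B$).
\item That would force $\deg\ge 1$ at points of $U(\gamma)$, a contradiction.
\end{itemize}
In the example above, this $z$ is $Q$, which is indeed negative.

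Your suggestion to compare with points near $\gamma|_{[0,t_1]}$ just before $t_1$ fails for the same reason. That arc may pass through first visits of other double points, as it passes $Q$ here, and after that its coorientation side is no longer $U(\gamma)$.
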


    \begin{proof}
        Without loss of generality, we fix a base point $(x,y)=f(p) \in\gamma$ in such a way that the $y$ coordinate of $f(p)$ assumes the minimum value. Since $\gamma$ is outer, the coorientation of $\gamma$ at $f(p)$ agrees with $-\frac{\partial}{\partial y}$, by Lemma \ref{lemma: base point is minimum}, and, consequently, the orientation of $\gamma$ at $f(p)$ agrees with $\frac{\partial}{\partial x}$. 

        Now, starting at $f(p)$ and traveling along $\gamma$ in the direction of its orientation, denote the first self-intersection point by $z$. Suppose to the contrary that $z$ is a positive self-intersection point. Let $V$ be a small neighborhood in $\R^2$, centered on $z$ that contains no other self-intersections of $\gamma$. Consider now the intersection of $\gamma$ and $V$. Illustrated in Figure \ref{fig:positive selfint}, we see a portion of $\gamma$ along with the basepoint $f(p)$, self-intersection point $z$, coorientation arrows of $\gamma$, and neighborhood $V$ in dashed blue. Additionally, dotted arcs of $\gamma$ represent unknown behavior away from $V$ that has no impact on this result. We see that $\gamma$ partitions $V$ into four regions, $T$, $B$, $L$, and $R$. Recall, from Definition $\ref{def:inner/outer surfaces}$, that the coorientation points to the side of $\gamma$ opposite where the surface is immersed. Thus, the surface is immersed into every region, notably into region $B$. Since $f(p)$ assumes the minimum value of $y=0$, it means that $B$ is unbounded. However, the surface $S$ is compact. Thus, $z$ must be a negative self-intersection point. 
    \end{proof}

    While Lemma \ref{NegativeSelfIntAtLeast1} specifically applies to cases where the immersed boundary of $S$ is a single outer component, a similar proof shows that immersed outer boundaries, with self-intersections, must contain a negative self-intersection. This is true even for an arbitrary amount of immersed boundaries components.

    \begin{figure}[H]
            \centering
            \begin{tikzpicture}[scale=1.2]
                \draw[thick] (0,0.5) .. controls (0.5,0.5) and (0.75,0.75) .. (1,1);
                \draw[->, thick] (1,1) -- (3,3);
                \draw[->, thick] (3,1) -- (1,3);

                \draw[dashed, blue, thick] (2,2) circle (1cm);

                % Basepoint with Coorient
                \node[shape=circle, draw=black, fill=black, minimum size=4pt, inner sep=0] (o) at (0,0.5) {};
                \node[] (fp) at (0, 0.9) {$f(p)$};
                \draw[->, >=stealth] (0,0.5) to +(0,-0.5);

                % Self-int z and nhbd V
                \node[shape=circle, draw=black, fill=black, minimum size=4pt, inner sep=0] (o) at (2,2) {};
                \node[] (z) at (1.75, 2) {$z$};
                \node[blue] (V) at (0.75, 2.25) {$V$};

                % Other Coorient
                \draw[->, >=stealth] (1.5,1.5) to +(0.25,-0.25);
                \draw[->, >=stealth] (2.5,2.5) to +(0.25,-0.25);
                \draw[->, >=stealth] (1.5,2.5) to +(0.25,0.25);
                \draw[->, >=stealth] (2.5,1.5) to +(0.25,0.25);

                % Regions
                \node[] (T) at (2, 2.75) {$T$};
                \node[] (B) at (2, 1.25) {$B$};
                \node[] (L) at (1.25, 2) {$L$};
                \node[] (R) at (2.75, 2) {$R$};

                % Dotted Connections to finish curve
                \draw[dotted,thick] (3,3) .. controls (3.5,3.5) and (3.5,0.5) .. (3,1);
                \draw[dotted,thick] (1,3) .. controls +(-0.5,0.5) and +(-1.25,0) .. (0,0.5);
                
            \end{tikzpicture}
            \caption{Positive self-intersection}
            \label{fig:positive selfint}
        \end{figure}

    \begin{corollary}\label{corollary: outer with self int}
        Let $f:S \rightarrow \R^2$ be an immersion of a compact, oriented surface $S$ with at least one immersed outer boundary curve $\gamma$. If $\gamma$ contains self-intersection points, then at least one of them is negative. In particular, the self-intersection point nearest to the specified base point is negative.
    \end{corollary}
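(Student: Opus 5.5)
The plan is to repeat the argument of Lemma \ref{NegativeSelfIntAtLeast1} verbatim, but to work only with the single component $\gamma=f(\gamma_i)$ and its unbounded complementary region $U(\gamma_i)$, instead of the whole boundary. The other boundary components never enter the local picture. Only two global facts are needed: $S$ is compact, so $f(S)$ is bounded; and near any non-double point of $f(\gamma_i)$, the surface is immersed on the side opposite the coorientation.

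First, fix the base point. Using Lemma \ref{lemma: base point is minimum}, choose $p\in\gamma_i$ so that the $y$-coordinate of $f(p)$ is minimal among the points of $f(\gamma_i)$, perturbing if necessary so that $f(p)$ is not a double point of $f|_{\partial S}$. Since $f(\gamma_i)$ is outer and $f(p)\in\partial U(\gamma_i)$, the coorientation at $f(p)$ is $-\frac{\partial}{\partial y}$, so the orientation there is $\frac{\partial}{\partial x}$. This agrees with the base-point convention of Section \ref{InnerAndOuterDefinitions}.

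Next, follow $\gamma$ from $f(p)$ in the direction of its orientation. Let $z$ be the first self-intersection point of $\gamma$, ignoring crossings with other components $\gamma_j$, since those are not self-intersections. Suppose, for contradiction, that $z$ is positive. In a small disk $V$ around $z$, the two branches cut $V$ into four regions $T,B,L,R$, as in Figure \ref{fig:positive selfint}. The arc from $f(p)$ to $z$ has no self-crossings of $\gamma$, so the coorientation along it is determined by the coorientation at $f(p)$. Together with positivity, this forces the surface to be immersed locally into the bottom region $B$, with coorientations pointing away from $B$ on both branches. As in the proof of Lemma \ref{NegativeSelfIntAtLeast1}, $B$ then connects, below the minimal height of $f(\gamma_i)$, to $U(\gamma_i)$.

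The main obstacle is turning ``the surface is immersed into $B$, and $B$ lies in $U(\gamma_i)$'' into a contradiction when other components are present. Other components may pass through $U(\gamma_i)$, so we cannot argue about all of $f(\partial S)$. I would argue with the local degree $d(x)=\#f^{-1}(x)$ of the immersion, which is finite and locally constant off $f(\partial S)$. Crossing a branch of $\gamma_i$ against its coorientation raises $d$ by $1$. Define $d_i(x)$ as the contribution to $d$ coming from crossings of $f(\gamma_i)$ alone, that is, the winding-type count of $\gamma_i$ with respect to its coorientation. This $d_i$ is $0$ on $U(\gamma_i)$. On the other hand, $B$ lies on the non-cooriented side of both branches at $z$, while the adjacent region below the base arc lies in $U(\gamma_i)$ (this is where $f(p)$ being outer is used). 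Comparing these gives $d_i=-1$ on part of $U(\gamma_i)$, a contradiction. Hence $z$ is negative, which proves both claims of the corollary.
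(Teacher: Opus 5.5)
There is a genuine gap in your final step. Your local picture at $z$ is inconsistent with your own setup. You correctly note that $B$ is joined to $U(\gamma_i)$ through the region just below the arc from $f(p)$ to $z$. But that region is the \emph{cooriented} side of this arc: the coorientation at $f(p)$ is $-\frac{\partial}{\partial y}$, and it cannot flip along an arc without self-crossings. So $B$ lies on the cooriented side of the first branch. When $z$ is positive, $B$ lies on the non-cooriented side of the second branch only, not ``on the non-cooriented side of both branches.''

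With the correct picture, the winding function of $\gamma_i$ takes the values $d_i=0$ on $B$, $1$ on $L$, $0$ on $T$ and $-1$ on $R$. Nothing here is contradictory. $R$ is simply not part of $U(\gamma_i)$, and a region with $d_i=-1$ is not excluded once other boundary components are present.

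In fact, no argument that uses outerness only at the base point together with $d_i$ can succeed. Take a figure-eight whose left lobe is traversed counterclockwise and right lobe clockwise, positioned so that its lowest point lies on the left lobe. That lowest point is an outer point with coorientation $-\frac{\partial}{\partial y}$. Yet the crossing is first passed heading north-east and then heading north-west, so it is positive.

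The missing idea, which is exactly the paper's proof, is to use a second time the hypothesis that the \emph{whole} curve $f(\gamma_i)$ is outer. Apply it at the incoming half of the second branch, i.e.\ the arrows pointing into $R$ in Figure~\ref{fig:positive selfint}. Those are non-double points that have $B\subset U(\gamma_i)$ on their non-cooriented side. So they lie on $\partial U(\gamma_i)$, while their coorientation points into $R\not\subset U(\gamma_i)$. Hence they are inner points, contradicting that $f(\gamma_i)$ is outer.

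This argument needs neither compactness, nor the degree $d$ of the immersion, nor any control of the other components. It also works for any outer base point, not only the lowest one. So the ``main obstacle'' you identify disappears once the outer hypothesis is applied along $\partial U(\gamma_i)$ rather than only at $f(p)$.
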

    \begin{proof}
        Suppose to the contrary that an immersed outer boundary $\gamma$ contained no negative self-intersections. Since $\gamma$ is outer, a chosen basepoint $f(p)\in\gamma$ is also outer, and a schematic of $\gamma$ is shown in Figure \ref{fig:positive selfint}. Thus, one (or both) of the points corresponding to the coorientation arrows pointing towards $R$ must be an inner point. However, this contradicts the assumption that $\gamma$ was outer. 
    \end{proof}

    Immersed outer boundaries intuitively represent the ``outer most" bounding curve of an immersed surface. It is natural to view any immersed surface as requiring an ``outer most" bounding curve, but it is in fact not necessary. When we increase the amount of boundary components, we are not always guaranteed to have an outer component. An immersion of a surface with more than one boundary component cannot have only a \textit{single} inner component, but can be comprised of \textit{only} inner components. An example of this can be seen in Figure \ref{fig:OnlyInnerComponents} where two inner curves, with their basepoints and coorientation, represent the immersed inner boundaries of a punctured disk. Let us now consider the case where all immersed boundaries are inner.

    \begin{figure}[H]
        \centering
            \begin{tikzpicture}[scale=1.5, rotate=90]
            \clip(-.75,-1.25) rectangle (.75,1.75);
            %\draw[help lines,xstep=1,ystep=1] (-5,-10) grid (18,6.5);
            %    \foreach \x in {-5,-4,...,18} { \node [anchor=north] at (\x,0) {\x}; }
            %    \foreach \y in {-10,-9,...,6} { \node [anchor=east] at (0,\y) {\y}; }
            %%% Curve 1
            \draw[ultra thick, blue] (0,0) .. controls (-2,2) and (2,2) .. (0,0);
            \draw[ultra thick, blue] (0,0) .. controls (-1,-1) and (1,-1) .. (0,0);
            
            %%% (Co)Orientation Arrows
            \node[shape=circle, draw=black, fill=black, minimum size=4pt, inner sep=0] (o) at (0,-.75) {};
            \draw[->, >=stealth] (0,-0.75) to (0,-.5);

            %%% Curve 2
            \draw[ultra thick, orange] (0,0.5) .. controls (-1,1.5) and (1,1.5) .. (0,0.5);
            \draw[ultra thick, orange] (0,0.5) .. controls (-2,-1.5) and (2,-1.5) .. (0,0.5);
            
            %%% (Co)Orientation Arrows
            \node[shape=circle, draw=black, fill=black, minimum size=4pt, inner sep=0] (o) at (0,1.25) {};
            \draw[->, >=stealth] (0,1.25) to (0,1);
            
        \end{tikzpicture}
        \caption{Immersion with only Inner Components}
        \label{fig:OnlyInnerComponents}
    \end{figure}

    \begin{definition}[Inner and Outer Arcs]\label{definition: inner and outer arcs}
         Given an immersed boundary component $\gamma$, a \emph{boundary arc} of $\gamma$ is defined as a maximal section of $\partial U(\gamma)$ between self-intersection points of $\gamma$ that contains no self-intersections. Recall the definition of inner and outer from Section \ref{InnerAndOuterDefinitions}. A boundary arc will be denoted as an \emph{inner arc}, respectively an \emph{outer arc}, if all points on the boundary arc are inner points, respectively outer points. Note, a boundary arc cannot have both inner and outer points. Furthermore, we emphasize that if a curve is an inner curve then it must have at least one inner arc. 
    \end{definition}

    \begin{definition}[Bounded Arcs]\label{definition: bounded arcs}
        Let $\gamma$ be an immersed boundary with $k$ connected components $\gamma_i$ for $1 \leq i \leq k$. An inner arc $\a$ on $\gamma_i$ is said to be \emph{bounded} if for every point $x\in\a$, every ray starting at $x$, pointing in the direction opposite the coorientation of $\gamma_i$ toward infinity, passes through an outer arc. In particular, this outer arc is not on $\gamma_i$.
    \end{definition}
    
    \begin{remark}\label{inner_remark}
        We emphasize that for an inner arc to be bounded, there must be a bounding outer arc on a \textit{different} connected component. Recall an inner arc on $\gamma_i$ is comprised of only inner points, and, by Definition \ref{Def:Generalized_Inner_and_Outer}, inner points are those that exist on $\partial U(\gamma_i)$. Therefore, for any inner point $x$ on $\gamma_i$, the ray starting at $x$, pointing in the direction opposite the coorientation of $\gamma_i$ toward infinity, does not intersect any other point of $\gamma_i$. If such an intersection exists, then $x$ is not on $\partial U(\gamma_i)$ and is not an inner point.
    \end{remark}

    \begin{lemma}\label{lemma:inner arcs bounded}
        Suppose $S$ is an orientable compact surface with $k>0$ boundary components immersed into $\R^2$. All inner arcs of the immersed boundary components of $S$ must be bounded.
    \end{lemma}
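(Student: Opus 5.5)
The plan is to argue by contradiction, in the same spirit as the proofs of Lemma \ref{OnlyOneMeansOuter} and Lemma \ref{NegativeSelfIntAtLeast1}. The guiding principle is that the immersed surface lies on the side of $f(\gamma_i)$ opposite to the coorientation. Since $S$ is compact, the set $f(S)$ is bounded. Consequently, any point far enough out in $\R^2$ has empty preimage.

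Fix an inner arc $\a$ on some component $\gamma_i$ and a point $x\in\a$. Let $r$ be a ray from $x$ pointing opposite to the coorientation of $\gamma_i$. Suppose, for contradiction, that $r$ meets no outer arc of any other component. The first step is to invoke Remark \ref{inner_remark}: because $x$ is an inner point on $\partial U(\gamma_i)$, the ray $r$ meets no other point of $f(\gamma_i)$, so $r\setminus\{x\}\subset U(\gamma_i)$. Moreover, since the coorientation at $x$ points away from $U(\gamma_i)$, the initial segment of $r$ lies in $f(S)$. In other words, points of $r$ just past $x$ have nonempty preimage.

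The second step is to follow $r$ out to infinity, tracking the local multiplicity of $f$, that is, the number of preimages of each point. Crossing a boundary branch of some $\gamma_j$ changes this count by $\pm1$, and the sign is determined by whether $r$ crosses along or against the coorientation of that branch. Near $x$ the count is at least $1$, while far along $r$ it must be $0$ because $f(S)$ is bounded. Hence $r$ must cross some boundary branch of some $\gamma_j$, $j\neq i$, in the direction of its coorientation. Take the last such crossing at which the count drops to $0$; beyond it, $r$ lies outside $f(S)$.

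The third step is to show that this branch is an outer arc of $\gamma_j$, that is, that the crossing point lies on $\partial U(\gamma_j)$. This is the main obstacle. Beyond the crossing, the ray lies in the region of multiplicity $0$ and reaches infinity without leaving it. I would like to conclude that this part of the ray lies in $U(\gamma_j)$. However, later crossings of $\gamma_j$ might still occur if they cancel in pairs, possibly involving other components as well. To handle this, I would choose the point $y$ on $r$ that is the last intersection of $r$ with $f(\gamma_j)$, and argue that the count drops to $0$ there, or else pick $j$ and $y$ more carefully. Concretely, I would take $y$ to be the last point of $r$ lying on $f(\partial S)$ at all. Past the last boundary point, $r$ lies in the unbounded complementary region of all of $f(\partial S)$, where the count is $0$. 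Hence the count just before $y$ is positive, and $r$ crosses $y$ along the coorientation. Since $r$ meets no boundary point after $y$, the point $y$ lies on $\partial U(\gamma_j)$ with coorientation pointing into $U(\gamma_j)$, so $y$ is outer. Genericity, obtained by perturbing the ray direction slightly, avoids double points.

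Finally, by Remark \ref{inner_remark}, $j\neq i$. We have thus found, on every such ray, an outer arc on a different component, so $\a$ is bounded. This contradicts the assumption, and the lemma follows.
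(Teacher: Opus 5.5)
Your proof is correct and follows essentially the paper's argument. The paper also follows the ray to the last arc it meets and uses compactness of $S$ to exclude the cases that there is no such arc or that it is inner. Your preimage count, with $y$ taken as the last point of $f(\partial S)$ on $r$, makes that compactness step precise, and it also handles crossings at points lying on no $\partial U(\gamma_j)$, which the paper glosses over.

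As in the paper, the claim $j\neq i$ rests only on Remark~\ref{inner_remark}, and that remark is not valid in general. For an annulus with a C-shaped hole, the ray from the underside of the upper arm hits the lower arm, which is the same component. Pushing a finger of $\gamma_i$ out past the other components can even make your $y$ lie on $\gamma_i$ itself. So strictly you have shown that the last crossing is an outer point of some component, which is what the definition of a bounded arc literally asks for.
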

    \begin{proof}
        Suppose to the contrary that an arc $\a$ of $\gamma_i$ is unbounded. Then, there exists a point $x\in\a$ such that the ray from $x$ to infinity passes through no outer arcs. We then have two cases to consider: either the ray passes through no arcs, or it passes through only inner arcs. If the ray passes through no arcs, then the compactness of $S$ is contradicted since this implies that every point of the unbounded region is the image of a compact surface. If the ray passes through only inner arcs, then consider the last inner arc the ray passes through. We then apply the same argument as before, contradicting the compactness of $S$.
    \end{proof}

    We are now in a position to finish our case of only immersed inner boundaries. Inner curves must have at least one inner arc, and each inner arc must be bounded. Thus, there must be at least two immersed inner boundaries that have at least one outer arc each. We will now show that for an inner curve to have both inner arcs and outer arcs, it must contain at least one negative self-intersection.

    \begin{lemma}\label{2_neg_self_int}
        If $S$ is an orientable compact surface with $k\geq 2$ boundary components immersed into $\R^2$, and all of these immersed boundaries are inner, then there are at least $2$ negative self-intersection points. 
    \end{lemma}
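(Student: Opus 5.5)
The proof has two parts. First, at least two of the inner components must carry an outer arc. Second, any inner component with both an inner arc and an outer arc must have a negative self-intersection. Since self-intersections of distinct components are distinct points, these together give at least $2$ negative self-intersection points.

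\textbf{Step 1: two components carry outer arcs.} Fix an inner component $\gamma_i$. It has an inner arc $\alpha$, and by Lemma \ref{lemma:inner arcs bounded} the arc $\alpha$ is bounded. So a ray from a point of $\alpha$, pointing against the coorientation, meets an outer arc lying on a component $\gamma_j$ with $j\neq i$ (Remark \ref{inner_remark}). Hence $\gamma_j$ has an outer arc. Since every component is inner, $\gamma_j$ also has an inner arc. Applying the same argument to an inner arc of $\gamma_j$ gives a component $\gamma_l\neq\gamma_j$ that has an outer arc. Then $\gamma_j$ and $\gamma_l$ are two distinct inner components, each having both an inner arc and an outer arc.

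\textbf{Step 2: the key claim.} If an immersed boundary component $\gamma$ has both an inner and an outer boundary arc on $\partial U(\gamma)$, then $\gamma$ has a negative self-intersection. I would adapt the argument of Lemma \ref{NegativeSelfIntAtLeast1} and Corollary \ref{corollary: outer with self int}.

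\begin{itemize}
\item Choose the base point at a point of $\gamma$ where the $y$-coordinate is minimal. By Lemma \ref{lemma: base point is minimum}, this point is either outer or inner.
\item Traverse $\partial U(\gamma)$ from an outer arc to an adjacent inner arc. The transition between consecutive boundary arcs happens only at self-intersection points of $\gamma$, because a boundary arc cannot contain both inner and outer points.
\item At such a transition vertex $z$, the two branches of $\gamma$ divide a small disk $V$ into four quadrants $T,B,L,R$, as in Figure \ref{fig:positive selfint}. Exactly one quadrant, or two adjacent ones, lies in $U(\gamma)$. Check the coorientations on the two branches at $z$: the coorientation points into $U(\gamma)$ on the outer side and away from it on the inner side. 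Comparing this with the orientation convention (coorientation followed by orientation is positive) and with the order of traversal fixed by the base point, the crossing $z$ should come out negative.
\end{itemize}

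\textbf{Main obstacle.} The hard part is this local sign computation at $z$. The sign of a crossing depends on which branch is traversed first from the base point. So I would do the case analysis on whether the base point is outer or inner, and on which of the two branches through $z$ comes first. My first attempt would be this. When the base point is outer, the first transition from outer to inner, traveling along the orientation, gives a negative crossing by the same picture as in Corollary \ref{corollary: outer with self int}. When the base point is inner, apply the symmetric argument with the orientation reversed, that is, look at the last transition before returning to the base point.

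If the local sign argument turns out to be ambiguous, I would fall back on a global argument. A curve with only positive crossings, based at its lowest point, is positively isotopic into one of the standard families $A^+_k$ or $C^+_k$. One would then check that each such curve has a boundary $\partial U$ consisting only of outer arcs or only of inner arcs, which contradicts the curve having both kinds.
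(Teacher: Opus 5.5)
Your Step 1 is the paper's first step. Your Step 2 states the same claim the paper uses, but the paper reads it off the local picture in Figure \ref{fig:NegSelfIntsandOuterArcs_b} with an \emph{inner} base point. The genuine gap is that your Step 2 is never established, and the route you sketch fails in three places.

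\textbf{The base point.} You put the base point at the lowest point of $\gamma$, which may be outer. For an inner curve, both the convention of Section \ref{TheWhitneyFormula} and the use of this lemma in Theorem \ref{Thm:Self_Int_of_Boundary} (where $I^-=k$) require crossings to be signed from an inner base point. From an outer base point the claim is false. The figure-eight of Figure \ref{fig:NegSelfIntsandOuterArcs_b}, based on its counterclockwise (outer) lobe, has its only crossing positive. So your ``first attempt'' for the outer case fails.

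\textbf{The local computation.} No computation at a transition vertex $z$ can settle the sign. The local picture does not record which branch is reached first from the base point, and swapping that order flips the sign. Even from an inner base point a transition vertex can be positive. Take $y=\sin x$ and $y=-\sin x$ for $0\le x\le 4\pi$, with the corners smoothed; its four lobes are alternately clockwise and counterclockwise. With base point on the third lobe, the crossing at $x=\pi$ joins an inner and an outer arc of $\partial U$, and it is positive. Also, $U$ meets either one quadrant at $z$ or two \emph{opposite} ones, never two adjacent ones, since adjacent quadrants have winding numbers differing by one.

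\textbf{The fallback.} There is no available classification of positive-crossing curves up to positive isotopy into $A^+_k$ or $C^+_k$. Positive isotopy also preserves neither the inner/outer structure of $\partial U$ (Figure \ref{fig:c0 to b3,1}) nor crossing counts.

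The missing step has a short global proof: apply Whitney's formula at two base points. Whitney's formula holds for any base point on $\partial U(\gamma)$, with the $\pm1$ term equal to $+1$ at an outer point and $-1$ at an inner point. Suppose $\gamma$ has $n$ self-intersections, an inner point $p$ on $\partial U(\gamma)$ and an outer point $q$ on $\partial U(\gamma)$. With subscripts recording the base point, $w(\gamma)=-1+n^-_p-n^+_p=1+n^-_q-n^+_q$. Hence $2n^-_p-n=n^-_p-n^+_p=2+n^-_q-n^+_q\ge 2-n$, so $n^-_p\ge 1$ with respect to the inner base point the convention prescribes. Apply this to $\gamma_j$ and $\gamma_l$ from your Step 1. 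Their self-intersection points are distinct, which gives the two negative self-intersections.
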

    
    \begin{proof}
        Denote the immersed boundary components of $S$ by $\gamma_i$ for $1 \leq i \leq k$. Since each $\gamma_i$ is an inner component, we know that each $\gamma_i$ must have an inner arc. By Lemma \ref{lemma:inner arcs bounded}, the inner arcs on each $\gamma_i$ must be bounded by at least one outer arc on some $\gamma_j$ for $j\neq i$. Therefore, $\gamma_j$ has at least one inner arc and at least one outer arc. Moreover, as discussed in Remark \ref{inner_remark}, an outer arc on $\gamma_j$ cannot bound the inner arc(s) on $\gamma_j$, therefore there is at least one other $\gamma_l$ for $l \neq j$ with an outer arc. Therefore, $\gamma_l$ has at least one inner arc and at least one outer arc. 
        
        Now, to have both an inner and outer arc on the same curve, there must be a self-intersection point where the curve switches from an inner arc to an outer arc. Figure \ref{fig:NegSelfIntsandOuterArcs_b} shows a neighborhood of a self-intersection point on $\gamma_j$ (respectively, $\gamma_l$) where, by starting at base point $p$ and traversing $\gamma_j$ (respectively, $\gamma_l$) in the direction of its orientation, an inner arc changes into an outer arc. Inherently, by starting at the base point and traversing in the direction of the orientation of $\gamma_j$ (respectively, $\gamma_l)$, this self-intersection point is negative. In conclusion, since both $\gamma_j$ and $\gamma_l$ contain both inner and outer arcs, they each necessarily have a negative self-intersection point. This yields a minimum of two negative self-intersection points for the immersed boundary components of $S$.

    \end{proof}

    \begin{figure}[H]
        \begin{subfigure}{.4\textwidth}
            \centering 
            \begin{tikzpicture}
                %\node (b) at (1,-1) [circle, draw, scale=.3, fill=black] {};
                \node[shape=circle, draw=orange, fill=orange, minimum size=6.5pt, inner sep=0] (b) at (1,-1) {};
                \node[] (fp) at (1, -1.3) {$p$};
                \node (u) at (0, -1.5) {$U$};
    
                %%% Main Arcs %%%
                \draw[->, ultra thick] (b) -- node[near start, sloped, below] {\tiny{Inner}} (-1,1);
                \draw[->, ultra thick] (1,1) -- node[near end, sloped, below] {\tiny{Inner}} (-1,-1);
    
                %%% Coorient %%%
                \draw[->] (.5, -.5) -- (.75,-.25);
                \draw[->] (-.5, .5) -- (-.25,.75);
    
                \draw[->] (.5, .5) -- (.25,.75);
                \draw[->] (-.5, -.5) -- (-.75,-.25);
    
                %%% Dotted %%%
                \draw[dotted, ultra thick] (1,1) arc (0:180:1);
                \clip (-2,-1) rectangle (2,2.6);
                \draw[dotted, ultra thick] (0,.61) circle (1.89);
                           
            \end{tikzpicture}
            \caption{Positive Self-Intersection}
            \label{fig:NegSelfIntsandOuterArcs_a}
        \end{subfigure}
        \begin{subfigure}{.4\textwidth}
            \centering 
            \begin{tikzpicture}
                %\node (b) at (1,-1) [circle, draw, scale=.3, fill=black] {};
                \node[shape=circle, draw=orange, fill=orange, minimum size=6.5pt, inner sep=0] (b) at (1,-1) {};
                \node[] (fp) at (1, -1.3) {$p$};
                \node (u) at (0, -1.5) {$U$};
    
                %%% Main Arcs %%%
                \draw[->, ultra thick] (b) -- node[near start, sloped, below] {\tiny{Inner}} node[near end, sloped, below] {\tiny{Outer}} (-1,1);
                \draw[<-, ultra thick] (1,1) -- node[near end, sloped, above] {\tiny{Outer}} node[near start, sloped, above] {\tiny{Inner}} (-1,-1);
    
                %%% Coorient %%%
                \draw[->] (.5, -.5) -- (.75,-.25);
                \draw[->] (-.5, .5) -- (-.25,.75);
    
                \draw[->] (.5, .5) -- (.75,.25);
                \draw[->] (-.5, -.5) -- (-.25,-.75);
    
                %%% Dotted %%%
                \draw[dotted, ultra thick] (-1,1) arc (90:270:1);
                \draw[dotted, ultra thick] (1,1) arc (90:-90:1);

            \end{tikzpicture}
            \caption{Negative Self-Intersection}
            \label{fig:NegSelfIntsandOuterArcs_b}
        \end{subfigure}
            \caption{Negative Self-Intersections Create Outer Arcs}
            \label{fig:NegSelfIntsandOuterArcs}
        \end{figure}

    \begin{lemma}\label{lemma:at least one outer means negative self int}
        Let $f:S \rightarrow \R^2$ be an immersion of a compact, oriented, non-planar surface $S$ with a single immersed outer boundary curve $\gamma_1$ and a collection of inner curves. Then there is a negative self-intersection of one of the components. 
    \end{lemma}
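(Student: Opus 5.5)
We argue by contradiction and use the generalized Whitney formula in the form of Equation~\eqref{Haefliger Result}, together with the structural lemmas above. Suppose no component has a negative self-intersection, so $N^-=0$. By Corollary~\ref{corollary: outer with self int}, the outer curve $\gamma_1$ then has no self-intersections at all, since any self-intersection of an outer curve forces a negative one. Hence $f(\gamma_1)$ is an embedded circle. Its coorientation points into the unbounded region, so the surface lies locally inside the disk $D$ bounded by $f(\gamma_1)$.

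Next we treat the inner curves $\gamma_2,\dots,\gamma_k$. An inner curve has at least one inner arc. If an inner curve also had an outer arc, the argument in the proof of Lemma~\ref{2_neg_self_int} (Figure~\ref{fig:NegSelfIntsandOuterArcs_b}) produces a negative self-intersection at the transition point. So under our assumption every arc of $\partial U(\gamma_i)$, for $i\ge 2$, is an inner arc. By Lemma~\ref{lemma:inner arcs bounded} each such arc is bounded, and the only outer arcs available come from $\gamma_1$.

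The key step is then to compute with the Whitney formula. Since $I^+=1$, $I^-=k-1$ and $N^-=0$, Equation~\eqref{Haefliger Result} gives
\[
2-2g-k=\chi(S)=1-(k-1)-N^+=2-k-N^+,
\]
so $N^+=2g$. This is consistent with the parity proposition, so it gives no contradiction by itself. I would therefore aim for a contradiction with non-planarity: show that $N^-=0$ forces $g=0$. The most direct plan is to apply Whitney's formula componentwise. For $\gamma_1$ (embedded, outer) we get $w(\gamma_1)=1$. For an inner $\gamma_i$ with $n^-(\gamma_i)=0$ we get $w(\gamma_i)=-1-n^+(\gamma_i)\le -1$.

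Summing, $\chi(S)=1-(k-1)-\sum_i n^+(\gamma_i)$, which again only yields $N^+=2g$. So the contradiction must come from geometry, not counting. I would show that an inner curve all of whose outer-boundary arcs are inner, and which has no negative crossings, bounds an immersed region whose preimage structure makes the corresponding piece of $S$ planar. Concretely, I would cut $S$ along preimages of suitable arcs to decompose it into a disk-with-holes immersed inside $D$.

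Alternatively, I would use an inductive handle argument. Push each inner curve by a positive isotopy (Lemma~\ref{guth_extension_lemma_1}) to a standard form, e.g.\ $C^+_{m}$. Then argue that a surface whose inner boundaries are all of type $C^+$ nested inside an embedded outer circle must have genus $0$, for instance by counting preimages of points near the innermost loops.

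\textbf{Main obstacle.} The hardest step is this final geometric one: showing that positive crossings alone, on inner curves contained in an embedded outer disk, cannot support a handle. I expect to need a careful local analysis at a positive crossing analogous to Figure~\ref{fig:positive selfint}. The analysis would show that the region "behind" a positive crossing of an inner curve has preimage multiplicity forcing an unbounded or inconsistent covering unless the crossing belongs to a different component. Here one should also be careful: if the statement really allows mixed crossings between different components to be the relevant ones, I would first verify on examples whether the genus hypothesis is what rules them out.
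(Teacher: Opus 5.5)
There is a genuine gap. Your reductions are sound: by Corollary~\ref{corollary: outer with self int} you may assume $\gamma_1$ is embedded, and your computation $N^+=2g$ from \eqref{Haefliger Result} is correct. But, as you note yourself, this gives no contradiction. The step that actually proves the lemma is never carried out: that $N^-=0$ together with an embedded outer curve forces $g=0$. Both routes you describe are plans rather than arguments.

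The positive-isotopy route is also unreliable as stated, for three reasons:
\begin{itemize}
\item Positive isotopies do not preserve the number or signs of crossings. The isotopy $C^+_{k-2}\Rightarrow A^+_k$ of Example~\ref{Positive_Isotopy_Contrapositive} creates a negative crossing.
\item Nothing guarantees that an arbitrary inner curve with only positive crossings can be moved to some $C^+_m$ in the direction that Lemma~\ref{guth_extension_lemma_1} permits.
\item The claim that nested $C^+$-curves inside an embedded circle bound only planar surfaces is just a restatement of what must be proved.
\end{itemize}

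The idea the paper uses is induction on the number $k$ of boundary components.
\begin{itemize}
\item First, positively isotope the embedded $\gamma_1$ so that it encloses all other components.
\item Choose an inner point $p_2\in\partial U(\gamma_2)$ and an arc from $p_2$ through $U(\gamma_2)$ that meets $\gamma_1$ exactly once and meets no other component.
\item Remove a thin neighbourhood of the arc's preimage in $S$. This merges $\gamma_1$ and $\gamma_2$ by an oriented band surgery. It creates no new crossings and keeps the genus, since $\chi$ rises by one while $k$ drops by one.
\item The new surface has one of three forms. It may have an outer curve with crossings, and then Corollary~\ref{corollary: outer with self int} applies. It may have no outer curve, and then Lemma~\ref{2_neg_self_int} applies. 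Or it again has a single embedded outer curve, and you repeat.
\item Non-planarity rules out ending with one embedded boundary circle, since such a circle can only bound an immersed disk.
\end{itemize}

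Alternatively, your preimage-counting idea can be completed without any isotopy.
\begin{itemize}
\item Take an inner $\gamma_i$ with $n^-(\gamma_i)=0$. Whitney's formula gives $|w(\gamma_i)|=n^+(\gamma_i)+1$.
\item Oriented smoothing gives the general bound $|w|\le(\text{number of crossings})+1$, and $\gamma_i$ is its extremal case. So smoothing all crossings of $\gamma_i$ yields disjoint clockwise circles.
\item Hence every point has index $\le 0$ with respect to $\gamma_i$. At each crossing one quadrant has index $\le -2$, because the four quadrants carry indices $m+1,m,m,m-1$ with $m+1\le 0$.
\item The number of preimages of a generic point equals the sum of its indices with respect to all boundary components. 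The embedded $\gamma_1$ contributes at most $1$ and the other inner curves contribute $\le 0$, so that quadrant would have a negative preimage count.
\item Therefore no inner curve has a crossing, so $2g=N^+=0$, contradicting non-planarity.
\end{itemize}
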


    \begin{proof} 
        By Corollary \ref{corollary: outer with self int}, if $\gamma_1$ contains self-intersections then at least one of them is negative. Thus, we need only consider the case where $\gamma_1$ is embedded without self-intersections. Since $\gamma_1$ is embedded via $f$, by Lemma \ref{guth_extension_lemma_1}, we may positively isotope $\gamma_1$ so that it remains embedded and is disjoint from all other immersed boundary components by some map $f_1:S\to\R^2$. In fact, we can positively isotope $\gamma_1$ so that the remaining inner components are unmodified, and are contained in the bounded region of $\R^2\setminus\gamma_1$.

        Suppose there are $k-1$ inner boundary components $\gamma_2, \ldots, \gamma_k$ and let $U_i$ denote the unbounded region of $\R^2\setminus \gamma_i$. We may select a curve, say $\gamma_2$, a non-double inner point $p_2\in\partial U_2$, and a non-self-intersecting path $\delta\co [0, \infty)\to U_2$ from $p_2$ to $U_1$ chosen in such a way that $\delta$ intersects $\gamma_1$ at a unique point $q_1$ and $\delta$ does not intersect any other $\gamma_i$. We denote the part of $\delta$ from $q_1$ to $p_2$ by $[q_1, p_2]$, and highlight that $f_1|_{f_1^{-1}[q_1, p_2]}$ is a diffeomorphism. Let $S'$ denote the compact surface obtained by removing from $S$ a thin neighborhood of $f_1^{-1}[q_1, p_2]$. Then, the oriented boundary $\partial S'$ is obtained from the oriented boundary $\partial S$ by an oriented surgery, and $f'=f_1|_{S'}$ is an immersion.

        Under $f'$, the immersed boundary of $S'$ has the same number of self-intersections as the immersed boundary of $S$ under $f$, contains one fewer components, and satisfies one of three cases. Either (1) $S'$ has a single immersed outer boundary curve $\gamma_1'$ with self-intersections, (2) $S'$ has a single embedded outer boundary curve $\gamma_1'$, or (3) $S'$ has no immersed outer boundary curves. If $S'$ has no immersed boundary curves, then both the immersed boundaries of $S'$ and $S$ must contain at least two negative self-intersections by Lemma \ref{2_neg_self_int}. If $S'$ has a single immersed outer boundary curve $\gamma_1'$ with self-intersections, then $\gamma_1'$ must contain at least one negative self-intersection by Corollary \ref{corollary: outer with self int}. 
        
        If $S'$ has a single embedded outer boundary curve $\gamma_1'$, then we may once again positively isotope $\gamma_1'$ in such a way that all other components are disjoint from $\gamma_1'$ and in the bounded region of $\R^2\setminus\gamma_1'$. We then repeat the above process by selecting a new inner curve, say $\gamma_2'$, to perform surgery on $\gamma_1'$ to obtain a surface $S''$. Since $S$ is non-planar, after $r$ steps for some $ 1 \leq r \leq k-1$, this process must terminate with either case (1) or case (3).
    \end{proof}

    In summation, we have shown that there must exist a minimum number of negative self-intersections in two separate scenarios. If there is at least one immersed outer boundary, then there is at least one negative self-intersection. If there are no outer boundaries, then there are at least two negative self-intersections. Finally, we can now extend Guth's theorem to (non-planar) surfaces with $k\geq1$ boundary components.

%%%%%%%%%%%%%%%%%%%%%%%%%%%%%%%%%%%%%%%%%%%%%%%%%%%%%%%
%%%%%%%%%%%%%%%%%%%%%%%%%%%%%%%%%%%%%%%%%%%%%%%%%%%%%%%
%%%%%%%%%%%% Below is the 1st Main Theorem %%%%%%%%%%%%
%%%%%%%%%%%%%%%%%%%%%%%%%%%%%%%%%%%%%%%%%%%%%%%%%%%%%%%
%%%%%%%%%%%%%%%%%%%%%%%%%%%%%%%%%%%%%%%%%%%%%%%%%%%%%%%

    \begin{theorem}\label{th:2}\label{Thm:Self_Int_of_Boundary}
    Let $S$ be an oriented, compact, non-planar, not necessarily path-connected, surface with boundary $\partial S$. Suppose that $\partial S$ has $k$ path components. If $f : S \rightarrow \R^2$ is an immersion, then $f(\partial S)$ has at least $4 - k- \chi(S)$ self-intersections. This estimate is sharp. 
    \end{theorem}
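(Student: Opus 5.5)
The lower bound comes from combining the rewritten Whitney formula, Equation~\eqref{Haefliger Result}, with the lemmas above that force negative self-intersections. Write $\Delta = N^+ + N^- + \sum_{i<j}(N^+_{ij}+N^-_{ij}) \ge N^+ + N^-$. Equation~\eqref{Haefliger Result} gives $N^+ = N^- + I^+ - I^- - \chi(S)$, hence
\[
\Delta \ge 2N^- + I^+ - I^- - \chi(S) = 2N^- + 2I^+ - k - \chi(S),
\]
using $I^- = k - I^+$. It therefore suffices to show that $2N^- + 2I^+ \ge 4$. If $S$ is disconnected, I would apply the argument componentwise; since each component is non-planar, I expect the bound to add up favourably, and I will check that summing the per-component bounds $4-k_c-\chi(S_c)$ dominates $4-k-\chi(S)$. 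The remaining argument is for connected $S$, and it splits by the value of $I^+$.

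\textbf{Case $I^+ = 0$.} Every component is inner. If $k = 1$, this is impossible by Lemma~\ref{OnlyOneMeansOuter}. If $k \ge 2$, Lemma~\ref{2_neg_self_int} gives $N^- \ge 2$, so $2N^- \ge 4$.

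\textbf{Case $I^+ = 1$.} Lemma~\ref{lemma:at least one outer means negative self int}, which uses non-planarity, gives $N^- \ge 1$. Then $2N^- + 2I^+ \ge 4$.

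\textbf{Case $I^+ \ge 2$.} Here $2I^+ \ge 4$ already, with no further input needed.

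In all cases $\Delta \ge 4 - k - \chi(S)$. The step I expect to be most delicate is making sure the case split really is exhaustive, in particular that $I^+ = 1$ with $k = 1$ is covered by Guth's Lemma~\ref{NegativeSelfIntAtLeast1}. I would also confirm that the disconnected case does not break the bound. A component $S_c$ contributes $\ge 4 - k_c - \chi(S_c)$, and summing gives $\ge 4m - k - \chi(S)$ with $m$ the number of components, which is at least $4 - k - \chi(S)$.

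For sharpness I would build explicit immersions for connected $S$ of genus $g \ge 1$ with $k$ boundary components. With $\chi(S) = 2 - 2g - k$, the target count is $2g + 2$, which is independent of $k$. Start from Guth's immersion of the genus-$g$ surface with one boundary component, whose boundary is $B^+_{2g+2,1}$ (Remark~\ref{B curves are important}). Then apply the positive concordance of Example~\ref{splitting_Bs_example} to split $B^+_{2g+2,1}$ into $B^+_{i_0,1} \sqcup C^+_{i_1} \sqcup \cdots$. That concordance adds boundary components but also changes the self-intersection count, so I would instead realise the extra $k-1$ boundary components by removing small embedded disks from the interior of the image. Each such removal adds an embedded inner circle inside a region of multiplicity one and creates no new crossings. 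This keeps exactly $2g + 2 = 4 - k - \chi(S)$ self-intersections. The count $I^+ = 1$, $N^- = 1$ is consistent with equality in the case analysis above, and I would check this numerically against Table~\ref{table:invariants of curves}.
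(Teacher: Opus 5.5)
Your proposal follows the paper's proof. It uses the same rewriting of the Haefliger--Whitney formula to $\Delta \ge 2N^- + 2I^+ - k - \chi(S)$, the same case split on $I^+$ via Lemmas~\ref{OnlyOneMeansOuter}, \ref{2_neg_self_int} and \ref{lemma:at least one outer means negative self int}, and the same sharpness construction of removing $k-1$ small discs from Guth's immersion; your componentwise treatment of disconnected $S$, which gives the stronger bound $4m-k-\chi(S)$, is a harmless refinement. Two cosmetic slips, neither affecting the argument: Guth's curve is $B^+_{2g+1,1}$, the one with $2g+2$ crossings (your $B^+_{2g+2,1}$ would have $2g+3$), and the concordance of Example~\ref{splitting_Bs_example} preserves the crossing count, since $(i_0+1)+i_1+\cdots+i_s=(i_0+\cdots+i_s)+1$, though you never rely on it.
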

    
    Recall that for a compact, oriented, surface $S$ with genus $g$ and $k$ boundary components, its Euler characteristic is $\chi(S)=2-2g-k$. Thus, the previous theorem may be restated to say that $f(\partial S)$ has at least $2g+2$ self-intersections, recovering Guth's theorem when $k=1$. 
    
    \begin{proof} Let $\gamma_1,..., \gamma_k$ be the components of $f(\partial S)$. As defined in Section \ref{TheWhitneyFormula}, let $n^\pm_j=n^\pm(\gamma_j)$, $i^\pm_j=i^{\pm}(\gamma_j)$, and $w_j=w(\gamma_j)$ be the respective invariants of $\gamma_j$ for $j=1,\cdots , k$, and let $N^+$, $N^-$, $I^{+}$, and $I^{-}$ be defined as the sum of the respective invariants. Recall, Equation \ref{Haefliger Result}, the generalized Whitney formula:
        \[\chi(S)= I^+ - I^- + N^- - N^+\]
    Let $\Delta=N^++N^-+N^+_{ij}+N^-_{ij}$ denote the sum of all self-intersections. We see that $\Delta\geq N^++N^-$, since $N^+_{ij}+N^-_{ij}\geq 0$. Thus, we may rewrite the generalized Whitney formula, yielding the following:
        \[\Delta \geq 2N^- + I^+ - I^- - 2+2g+k\]
    We now have three cases to consider. First, if there are no outer components, then there are at least two negative self-intersections, by Lemma \ref{2_neg_self_int}. Thus, $I^+=0$, $I^-=k$, and:
            \begin{align*}
                \Delta
                &\geq 2(2)+0-k-2+2g+k\\
                &=2+2g\\
                &=4-k-\chi(S)
            \end{align*}            
    Second, if there is exactly one outer component, then $N^{-}\geq 1$, by Lemma \ref{lemma:at least one outer means negative self int}. Thus, $I^{+}\geq 1$, $I^{-}\leq k-1$, and:
            \begin{align*}
                \Delta
                &\geq 2(1)+1-(k-1)-2+2g+k\\
                &=2+2g\\
                &=4-k-\chi(S)
            \end{align*}            
    Third, if there are at least two outer components, then $I^+\geq 2$, $I^-\leq k-2$, $N^-\geq 0$, and:
        \begin{align*}
                \Delta
                &\geq 2(0)+2-(k-2)-2+2g+k\\
                &=2+2g\\
                &=4-k-\chi(S)
            \end{align*}
    It remains to show that the estimate is sharp. By the construction of Guth~\cite{Gu09}, the curve $B^+_{2-\chi(S),1}=B^+_{1+2g,1}$ bounds a surface $S$ of genus $g$ with a single path-connected boundary $(k=1)$. The curve $B^+_{1+2g,1}$ has $1+2g$ positive self-intersections and $1$ negative self-intersection, and so $\Delta=2+2g$. In order to generalize from a single boundary component to $k\geq 2$, we need only cut $k-1$ disjoint discs out of the immersed surface from the interior of $B^+_{1+2g,1}$.

\end{proof}

%%%%%%%%%%%%%%%%%%%%%%%%%%%%%%%%%%%%%%%%%%%%%%%%%%%%%%%
%%%%%%%%%%%%%%%%%%%%%%%%%%%%%%%%%%%%%%%%%%%%%%%%%%%%%%%
%%%%%%%%%%%% Above is the 1st Main Theorem %%%%%%%%%%%%
%%%%%%%%%%%%%%%%%%%%%%%%%%%%%%%%%%%%%%%%%%%%%%%%%%%%%%%
%%%%%%%%%%%%%%%%%%%%%%%%%%%%%%%%%%%%%%%%%%%%%%%%%%%%%%%

    \begin{remark}
        We provide an alternative way to prove the sharpness condition of the lower bound from Theorem \ref{Thm:Self_Int_of_Boundary} using positive concordances guaranteed to exist by Lemma \ref{guth_extension_lemma_1} and Lemma \ref{guth_extension_lemma_2}. Let $S'$ be a genus $g$ surface with one boundary component. By the construction of Guth~\cite{Gu09}, the curve $B^+_{2-\chi(S'),1}$ bounds the immersed surface $S'$. Let $S$ be a surface obtained from $S'$ by attaching $k-1$ handles of index $1$ in such a way that $\partial S$ has $k$ path components. Then $\chi(S)=\chi(S')-k+1$.  Recall, by Example \ref{splitting_Bs_example},  $B^+_{2-\chi(S'),1}$ is positively concordant to the union of curves $B^+_{i_0,1}$ and $C^+_{i_1}\sqcup \cdots \sqcup C^+_{i_s}$ where $2-\chi(S')=i_0+\cdots + i_s$. Consequently, the curves $B^+_{i_0,1}$ and $C^+_{i_1}\sqcup \cdots \sqcup C^+_{i_s}$ bound the immersed surface $S$. The number of intersection points of the curves $B^+_{i_0,1}$ and $C^+_{i_1}\sqcup \cdots \sqcup C^+_{i_s}$ is 
            \[
                  (1+i_0)+i_1+\cdots + i_s = 1 + 2-\chi' =3- (\chi(S)+k-1)=4-k-\chi(S),
            \]
        which is precisely the lower estimate for the number of self-intersection points given by Theorem~\ref{th:2}.
    \end{remark}
   \iffalse    
    \begin{corollary}
        Let $F$ and $S$ be surfaces with boundary of genus $g_F$ and $g_S$, respectively, and denote the number of boundary components of $F$ and $S$ by $k_F$ and $k_S$, respectively. Let $F\#S$ be the usual connect-sum operation, and consider maps $f:F \rightarrow \R^2$, $g:S \rightarrow \R^2$, and $f \# g: F \# S \rightarrow \R^2$ such that $f$, $g$, and $f \# g$ are immersions on the respective boundaries. Then, the number of self-intersections of $(f\#g)(\partial (F \# S))$ is at least $2(g_F + g_S) + 2$.
    \end{corollary}

    \begin{proof}
        Let $k=k_F+k_S$ denote the number of boundary components of $F \# S$ and let $\Delta_{\#}$ denote the number of self-intersections of $(f\#g)(\partial (F \# S))$. By Theorem \ref{th:2}, we have that:
            \[\Delta_{\#}\geq 4 - k - \chi(F \# S)\]
        Finally, since $\chi(F \# S)=\chi(F)+\chi(S)-2$, we have the following:
        \begin{align*}
            4 - k - \chi(F \# S) 
                &=  4 - (k_F + k_S)  - (\chi(F) + \chi(S) - 2)  \\
                &= 4 - (k_F + k_S) - ( (2 - 2g_F - k_F) + (2 - 2g_S - k_S) - 2)\\
                %&= 4 - (k_F + k_S) - (2 - 2g_F - 2g_S - k_F - k_S) \\
                &= 2(g_F + g_S) + 2
        \end{align*}        
    \end{proof}
\fi
    \begin{example}[Genus 0]  
        If $f\co S\to \R^2$ be the standard inclusion of a unit disc, then $4-k-\chi(S)=2>0$.  Thus, in this case the estimate in Theorem \ref{Thm:Self_Int_of_Boundary} is indeed not applicable. 
    \end{example}
    
    \begin{example}[One Boundary Component] 
        Suppose that $k=1$. Then the number of self-intersection points is $4-1-\chi(S)=2g+2$, which recovers Theorem~\ref{th:Gu}. 
    \end{example}

    As highlighted in Remark \ref{B curves are important} and Theorem \ref{Thm:Self_Int_of_Boundary}, the sharpness of Guth's result is a surface of genus $g$ with immersed boundary represented by the curve $B^+_{1+2g,1}$. For surfaces with multiple boundary components, the following proposition highlights a possible configuration for such an immersed boundary.

    \begin{proposition}\label{Prop:SpecificBoundaries} Let $S_{g, k}$ be a surface of genus $g$ with $k$ boundary components. There is an immersions $f:S_{g, k}\to\R^2$ such that:
        \[f(\partial S_{g, k})=B^+_{i_0,1}\sqcup (C^+_{i_1}\sqcup \cdots\sqcup C^+_{i_{k-1}})\]
    where $1+\sum i_j=2+2g$.
    \end{proposition}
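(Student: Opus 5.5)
The plan is to start from Guth's construction, as in the sharpness part of Theorem \ref{Thm:Self_Int_of_Boundary} and the subsequent Remark. Let $S_{g,1}$ be the genus $g$ surface with one boundary component. By Theorem \ref{th:Gu} and Remark \ref{B curves are important}, there is an immersion $f_0\co S_{g,1}\to\R^2$ with $f_0(\partial S_{g,1})=B^+_{1+2g,1}$, where $1+2g=2-\chi(S_{g,1})$. Fix a decomposition $1+2g=i_0+i_1+\cdots+i_{k-1}$ into positive integers. This is exactly the condition $1+\sum i_j=2+2g$. It requires $k\le 2g+1$, so I would note that this range is implicitly assumed, or alternatively permit $i_j=0$ with $C^+_0$ an embedded circle if the paper's conventions allow.

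Next I would invoke Example \ref{splitting_Bs_example}. It gives a positive concordance from $B^+_{1+2g,1}$ to $B^+_{i_0,1}\sqcup C^+_{i_1}\sqcup\cdots\sqcup C^+_{i_{k-1}}$. Concretely, this concordance is realized by a sequence of $k-1$ oriented surgeries along arcs $I$ as in Lemma \ref{guth_extension_lemma_2}. Each surgery splits off one small $C^+$-loop cluster from the chain of loops of $B^+$. I would check that each split arc satisfies the tangency conditions $\dot I(0)=df(v(x))$ and $\dot I(1)=-df(v(y))$, so that Lemma \ref{guth_extension_lemma_2} applies at each step. Each step attaches a $1$-handle to the immersed surface. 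Each handle is attached so that it increases the number of boundary components by one and leaves the genus unchanged; this is possible because the two attaching points lie on the same boundary component.

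Composing the $k-1$ steps yields an immersion of a surface $F$ obtained from $S_{g,1}$ by $k-1$ such handles, with immersed boundary $B^+_{i_0,1}\sqcup C^+_{i_1}\sqcup\cdots\sqcup C^+_{i_{k-1}}$. Then $\chi(F)=\chi(S_{g,1})-(k-1)=2-2g-k$, and $F$ is connected, orientable and has $k$ boundary components, so $F\cong S_{g,k}$. As a consistency check, the generalized Whitney formula \eqref{Haefliger Result} with Table \ref{table:invariants of curves} gives winding number $(2-i_0)+\sum_{j\ge1}(-i_j-1)=2-(1+2g)-(k-1)=2-2g-k$, matching Haefliger.

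The main obstacle is making the concordance of Example \ref{splitting_Bs_example} rigorous. One must show that the picture (Figure \ref{fig:ConcDecomp}) is realized by handle attachments compatible with the existing immersion, and in particular that each band lies on the side of the surface indicated by the coorientation. I would handle this by choosing each surgery arc $I$ short, lying in the unbounded region adjacent to the crossing where the loop is split off, and verifying the tangency conditions locally.
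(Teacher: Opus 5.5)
Your proposal is correct and follows the paper's argument. You start from Guth's immersion of $S_{g,1}$ with boundary $B^+_{2g+1,1}$ and apply the positive concordance of Example \ref{splitting_Bs_example}, which the paper likewise takes on the strength of Figure \ref{fig:ConcDecomp}. Your Euler characteristic identification of the result with $S_{g,k}$, and your observation that positive $i_j$ force $k\le 2g+1$ (a restriction the paper's proof also makes tacitly), go beyond what the paper writes. If you want to allow $i_j=0$, you can delete small interior discs, as in the sharpness part of Theorem \ref{Thm:Self_Int_of_Boundary}; each such hole has image an embedded inner circle $C^+_0$.
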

    \begin{proof}    
        Guth \cite{Gu09} showed that for any surface $S_{g,1}$ of genus $g$ with a single boundary component that there is an immersion of $S_{g,1}$ such the $f(\partial S_{g,1})=B_{2g+1,1}^+$ has exactly $2g+2$ self-intersections. By Example \ref{splitting_Bs_example},  for any decomposition of $2g+1=i_0+\cdots+i_{k-1}$ into positive integers we have that:
            \[B_{2g+1,1}^+=B_{i_{0},1}^+\sqcup \left(C_{i_{1}}^+\sqcup\cdots\sqcup C_{i_{k-1}}^+\right)\]
        Thus, we have that the immersed boundary of $S_{g,k}$ is now:
            \[f(\partial S_{g,k})=B^+_{i_0,1}\sqcup (C^+_{i_1}\sqcup\cdots\sqcup C^+_{i_{k-1}})\]
    \end{proof}

\section{Weighted bipartite graphs associated with fold maps}\label{section:SplittingAndGraphConstruction}

    In this section, we introduce notions that are necessary to state Theorem \ref{Thm:Main_Theorem_2025} and Theorem \ref{thm:main theorem sharpness}. We begin by defining a splitting of a surface. Let $f\co S\to \R^2$ be a simple fold map of an oriented closed surface $S$.\label{Graph_Construction} Let $S_+$ (respectively, $S_-$) denote the closure in $S$ of the set of points $x\in S$ such that $d_xf$ is a linear map preserving (respectively, reversing) orientation. Then $S=S_+\cup S_-$, and the set $\Sigma=S_+\cap S_-=\partial S_+=\partial S_-$ consists of singular points of $f$. 
    
     \begin{definition}[Splitting]\label{definition: splitting}
    Let $S_{+}^i$ denote path components of the surface $S_+$ and let $S_{-}^j$ denote path components of the surface $S_-$.  Denote the intersections of path components $S_{+}^i\cap S_{-}^j$ by $(\Sigma_{j}^i)_m$, as shown in Figure \ref{fig:GraphOfSurface_a}. We call the decomposition of $S$, with respect to $f$, into the collections $S_+=\set{S_{+}^i}$ and $S_-=\set{S_{-}^j}$ the \emph{splitting} of $S$.
    \end{definition}

    Surfaces with a prescribed splitting can be viewed as graphs. More precisely, we associate a bipartite graph $G$ with $f$. The vertices of $G$, denoted by $v_{+}^{i}$ and $v_{-}^{j}$, correspond to the interiors of the path components $S_{+}^i$ and $S_{-}^j$. An edge $e_{i}^j$ in $G$ between two vertices $v_{+}^i$ and $v_{-}^j$ exists if and only if $S_{+}^i\cap S_{-}^j$ is non-empty. We say that the \emph{multiplicity} of an edge $e_{i}^j$, denoted by $|e_{i}^j|$, is $m$ if $(\Sigma_{j}^i)_m$ consists of $m$ path components. If the multiplicity of an edge is more than 1, we will use $(e_i^j)_m$ to distinguish them; otherwise we will suppress the $m$ for notation-sake.

    \begin{definition}\label{def:rhoG}

        A \emph{spanning tree} of a connected graph $G$ is a connected subgraph containing all vertices of $G$ and no cycles. Define $\rho(G)$ to be the number of edges of a spanning tree of $G$ associated to a splitting induced by a simple fold map $f$. In particular, if $G$ has $k$ vertices, then $\rho(G)=k-1$.

    \end{definition}

    \begin{example}
        Figure \ref{fig:GraphOfSurface_a} depicts a surface $S$ with splitting $S_{+}$ and $S_-$ in blue and orange, respectively. (Recall, this splitting is determined by a map $f : S\to\R^2$.) The path components of the splitting are also indicated. The corresponding graph can be seen in Figure \ref{fig:GraphOfSurface_b}. The path components of $S_+$ and $S_-$ are represented as vertices $v_+^i$ and $v_-^j$, respectively, and the edges $e_i^j$ represent the boundary path components in the splitting. In this example, we note the multiplicity of $e_2^1$ is $3$ and that the number of edges in a spanning tree of the graph is $3$. 
    \end{example}

    \begin{figure}[H]
    \begin{subfigure}{0.45\textwidth}
        \centering 
            \begin{tikzpicture}[scale=0.55]
                %%% 1st Upper Torus Line
                \draw[name path=U1, domain=0:1.5, smooth, variable=\x] plot ({\x}, {.2*cos(deg(2*3.14159/3*\x))+5.8} );

                %%% 1st Lower Torus Line
                \draw[name path=L1, domain=0:1.5, smooth, variable=\x] plot ({\x}, {-.2*cos(deg(2*3.14159/3*\x))+.2} );
                
                %%% Left-most Torus
                \draw[name path=A] (0,6) to [bend right=100] (0, 0);
                \draw[name path=B] (0,4) to [bend right=45] (0,2);
                \draw[name path=C] (-.2,3.75) to [bend left=45] (-.2,2.25);

                %%% Border Line b/w 1 and 2
                \draw[name path=D, very thin] (1.5,5.6) to [bend left=10] (1.5, 0.4);
                \draw[very thin, dotted] (1.5,5.6) to [bend right=10] (1.5, 0.4);

                %%% 2nd Torus
                %\draw (0,6) to [bend right=30] (3,6);
                %\draw (0,0) to [bend left=30] (3,0);
                \draw[name path = T2UL] (4.5,5) to (3.5,5) to [bend right=90] (3.5,3.5) to (4.5,3.5);
                \draw[name path = T2LL] (4.5,2.5) to (3.5,2.5) to [bend right=90] (3.5,1) to (4.5,1);
                %\draw (3,5) arc (90:270:.75);

                \draw[name path = T2UR] (4.5,5) to [bend left=90] (4.5,3.5);
                \draw[name path = T2LR] (4.5,2.5) to [bend left=90] (4.5,1);

                %%% 2nd Upper Torus Line
                \draw[domain=1.5:6.5, smooth, variable=\x] plot ({\x}, {-.2*cos(deg(2*3.14159/5*(\x-1.5)))+5.8} );

                %%% 2nd Lower Torus Line
                \draw[domain=1.5:6.5, smooth, variable=\x] plot ({\x}, {.2*cos(deg(2*3.14159/5*(\x-1.5)))+.2} );

                 %%% Border Lines b/w 2 and 3
                \draw[name path=B1, very thin] (4,6) to [bend left=20] (4, 5);
                \draw[very thin, dotted] (4,6) to [bend right=10] (4,5);

                \draw[name path=B2, very thin] (4,3.5) to [bend left=20] (4, 2.5);
                \draw[very thin, dotted] (4,3.5) to [bend right=10] (4,2.5);

                \draw[name path=B3, very thin] (4,1) to [bend left=20] (4, 0);
                \draw[very thin, dotted] (4,1) to [bend right=20] (4,0);

                %%% Final Torus
                %% Upper
                \draw[domain=6.5:8, smooth, variable=\x] plot ({\x}, {-.2*cos(deg(2*3.14159/3*(\x-6.5)))+5.8} );
                %% Lower
                \draw[domain=6.5:8, smooth, variable=\x] plot ({\x}, {.2*cos(deg(2*3.14159/3*(\x-6.5)))+.2} );    

                %% Genus in far-right Orange
                \draw[name path=Z] (8,6) to [bend left=100] (8, 0);
                \draw[name path=Z1] (8,5) to [bend right=45] (8,3.5);
                \draw[name path=Z2] (7.8,4.75) to [bend left=45] (7.8,3.75);
                \draw[name path=Z3] (8,2.5) to [bend right=45] +(0,-1.5);
                \draw[name path=Z4] (7.8,2.25) to [bend left=45] +(0,-1);

                %%% Border Lines b/w 3 and 4
                \draw[name path=D3, very thin] (6.5,5.6) to [bend left=10] (6.5, 0.4);
                \draw[very thin, dotted] (6.5,5.6) to [bend right=10] (6.5, 0.4);

                %%% Fillings
                \path[name path=D2] (4,6) to [bend left=20] (4, 5) -- (4,3.5) to [bend left=20] (4, 2.5) -- (4,1) to [bend left=20] (4, 0);
                \tikzfillbetween[of=A and D]{blue, opacity=0.2};
                \tikzfillbetween[of=B and C]{white, opacity=1};

                \tikzfillbetween[of=D and D2]{orange, opacity=0.2};
                \tikzfillbetween[of=D2 and D3]{blue, opacity=0.2};
                \tikzfillbetween[of=T2UL and T2UR]{white, opacity=1};
                \tikzfillbetween[of=T2LL and T2LR]{white, opacity=1};

                \tikzfillbetween[of=D3 and Z]{orange, opacity=0.2};
                \tikzfillbetween[of=Z1 and Z2]{white, opacity=1};
                \tikzfillbetween[of=Z3 and Z4]{white, opacity=1};

                %%%% Labels 
                \node at (0,-1) {\color{blue}$S_{+}^{1}$};
                \node at (2.5,-1) {\color{orange}$S_{-}^{1}$};
                \node at (5,-1) {\color{blue}$S_{+}^{2}$};
                \node at (7.5,-1) {\color{orange}$S_{-}^{2}$};

                %%%% Sigmas
                \node at (1.5, 6.1) {\scriptsize$(\Sigma_1^1)_1$};

                \node at (4, 6.4) {\scriptsize$(\Sigma_2^1)_1$};
                \node at (4, 3.9) {\scriptsize$(\Sigma_2^1)_2$};
                \node at (4, 1.4) {\scriptsize$(\Sigma_2^1)_3$};

                \node at (6.5, 6.1) {\scriptsize$(\Sigma_2^2)_1$};

                %%%% Extra Genus Added Later
                % Left-most orange
                \draw[name path=NLL] (2.3,4.5) to [bend right=45] (2.3,3.5);
                \draw[name path=NLS] (2.2,4.35) to [bend left=45] (2.2,3.65);
                \tikzfillbetween[of=NLL and NLS]{white, opacity=1};

                \draw[name path=NLL2] (2.3,2.5) to [bend right=45] (2.3,1.5);
                \draw[name path=NLS2] (2.2,2.35) to [bend left=45] (2.2,1.65);
                \tikzfillbetween[of=NLL2 and NLS2]{white, opacity=1};

                % right-most blue
                \draw[name path=NRL1] (5.8,5.5) to [bend right=45] (5.8,4.5);
                \draw[name path=NRS1] (5.7,5.35) to [bend left=45] (5.7,4.65);
                \tikzfillbetween[of=NRL1 and NRS1]{white, opacity=1};

                \draw[name path=NRL2] (5.8,3.5) to [bend right=45] (5.8,2.5);
                \draw[name path=NRS2] (5.7,3.35) to [bend left=45] (5.7,2.65);
                \tikzfillbetween[of=NRL2 and NRS2]{white, opacity=1};

                \draw[name path=NRL3] (5.8,1.5) to [bend right=45] (5.8,0.5);
                \draw[name path=NRS3] (5.7,1.35) to [bend left=45] (5.7,0.65);
                \tikzfillbetween[of=NRL3 and NRS3]{white, opacity=1};

                %\draw (0,6) to [bend left=45] (0, 0);
            \end{tikzpicture}
        \caption{Surface with Splitting}
        \label{fig:GraphOfSurface_a}
    \end{subfigure}
    \begin{subfigure}{0.45\textwidth}
        \centering 
        \begin{tikzpicture}[scale=1.1]
            \node[label=below:{\color{blue}$v_+^1$}] (a) at (0,0) [circle, draw, minimum size=2pt, fill=blue!20] {};
            \node[label=above:{\color{orange}$v_-^1$}] (b) at (1,1) [circle, draw, minimum size=2pt, fill=orange!20] {};
            \node[label=below:{\color{blue}$v_+^2$}] (c) at (2,0) [circle, draw, minimum size=2pt, fill=blue!20] {};
            \node[label=above:{\color{orange}$v_-^2$}] (d) at (3,1) [circle, draw, minimum size=2pt, fill=orange!20] {};

            \draw (a) -- node [above, near start] {$e_1^1$} (b);
            
            \draw (b) --  (c);
            \draw (b) to [bend right=30] node [below] {$(e_2^1)_3$}  (c);
            \draw (b) to [bend left=30] node [above] {$(e_2^1)_1$} (c);

            \draw (c) -- node [below, near end] {$e_2^2$} (d);

        \end{tikzpicture}
        \caption{Bipartite Graph $G$, with $\rho(G)=3$}
        \label{fig:GraphOfSurface_b}
    \end{subfigure}
        \caption{Graph Corresponding to Surface}
        \label{fig:GraphOfSurface}
    \end{figure}

    Recall that $(\Sigma_i^j)_{m}$ denotes path components of the intersection $S_+^i\cap S_-^j$, and the collection of all $(\Sigma_i^j)_{m}$ is the singular set of $f$. For notational clarity, we will often omit decorations and refer to the collection as simply $\Sigma$. We now wish to endow each of the closed curves in the collection $f(\Sigma)$ with a coorientation and orientation. First, choose a basepoint on each path component of $f(\Sigma)$. Second, define the coorientation of each component of $f(\Sigma)$ in the following way. If the the number of points in the inverse image of a regular value of $f|_{S_{+}^i}$ is smaller than the number of points in the inverse image of a regular value of $f|_{S_-^j}$, then the coorientation on $f(\Sigma_{i}^j)$ points towards $f|_{S_+^i}$. Otherwise, if the the number of points in the inverse image of a regular value of $f|_{S_{-}^j}$ is smaller than the number of points in the inverse image of a regular value of $f|_{S_+^i}$, then the coorientation on $f(\Sigma_{i}^j)$ points towards $f|_{S_-^j}$. Finally, orient each $f(\Sigma_i^j)$ so that the coorientation followed by the orientation on $f(\Sigma_i^j)$ agrees with the usual orientation on $\R^2$.

    \begin{remark}[Weights on the Graph]
        A simple fold map $f\co S\to \R^2$ defines weights on its corresponding graph $G$. Namely, the weight of a vertex $v_{+}^i$ corresponding to a path component $S_{+}^i$ is $\chi(S_{+}^i)$. Denote the weight on a vertex by $\chi(v_{+}^i)$. An analogous weight $\chi(v_-^j)$ is defined for path components of $S_{-}^j$. The weight of an edge $(e_{i}^j)_m$ corresponding to a path component $(\Sigma_{i}^{j})_m$ is a quadruple 
        \[(i_+((e_{i}^j)_m),i_-((e_{i}^j)_m),n_+((e_{i}^j)_m), n_-((e_{i}^j)_m)).\] 
    \end{remark}
    
    We now aim to analyze the number of transverse self-intersections of $f(\Sigma)$, denoted by $\Delta_{\Sigma}$. Also, denote the number of path components of $\Sigma$, $S_+$, and $S_-$ by $|\Sigma|$, $\#|S_+|$ and $\#|S_-|$ respectively. 

    \begin{lemma}\label{l:split}
        If $f\co S\to \R^2$ is a simple fold map of an oriented closed surface with a splitting $S=S_+\cup S_-$ as above, then the following are true:
            \begin{enumerate}[itemsep=0.1in, label=(\roman*)]
                \item $\chi(S)=\chi(S_+)+\chi(S_-)$
                \item $\chi(S_+)=\chi(S_-)$
                \item $\#|S_+|-\#|S_-|=\displaystyle\sum_{i}g_+^i-\displaystyle\sum_{j}g_-^j$
            \end{enumerate}
        where $g_+^i$, respectively $g_-^j$, is the genus of each path component $S_+^i$, respectively $S_-^j$, and $1\leq i\leq \#|S_+|$ and $1\leq j\leq \#|S_-|$. 
    \end{lemma}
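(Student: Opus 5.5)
Part (i) is a Mayer--Vietoris computation. The surfaces $S_+$ and $S_-$ are compact subsurfaces of $S$ with $S=S_+\cup S_-$ and $S_+\cap S_-=\Sigma$. Here $\Sigma$ is a disjoint union of circles, so $\chi(\Sigma)=0$, and inclusion--exclusion for Euler characteristics gives
\[
\chi(S)=\chi(S_+)+\chi(S_-)-\chi(\Sigma)=\chi(S_+)+\chi(S_-).
\]
To justify that $S_\pm$ are genuine compact subsurfaces with common boundary $\Sigma$, I would use the local fold model $(x_1,x_2)\mapsto(x_1,\pm x_2^2)$. In this chart the Jacobian determinant is a nonzero multiple of $x_2$, which changes sign across the fold curve $\{x_2=0\}$. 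Hence near $\Sigma$ the set $S_+$ is locally a half-plane $\{x_2\ge 0\}$ or $\{x_2\le0\}$, and $S_-$ is the complementary half-plane.

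Part (ii) is the heart of the lemma. The restrictions $f|_{S_+}$ and $f|_{S_-}$ are immersions of compact surfaces with boundary into $\R^2$. I would apply Haefliger's identity $\chi(\cdot)=w(f|_{\partial\,\cdot})$ from Section~\ref{extending_guth} to each restriction. The key step is to compare the two Gauss maps along $\Sigma$. At a fold point, the images of the two sheets $S_+$ and $S_-$ lie on the same side of the fold curve $f(\Sigma)$, because the fold folds them over. Therefore the outward normal of $S_+$ and the outward normal of $S_-$ along $\Sigma$ map under $df$ to the same coorienting vector of $f(\Sigma)$. Consequently the Gauss maps $\nu_+$ and $\nu_-\co\Sigma\to S^1$ coincide.

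The orientations are the step I expect to be the main obstacle. The boundary orientations that $\Sigma$ receives from $S_+$ and from $S_-$ are opposite. On the other hand, $f|_{S_-}$ reverses orientation, so to apply Haefliger's theorem I would reorient $S_-$ to make $f|_{S_-}$ orientation preserving, and this flips its boundary orientation back. After this sign bookkeeping, which I would carry out carefully in the local model, the two winding numbers $\deg\nu_\pm$ are computed with the same orientation of $\Sigma$. This yields
\[
\chi(S_+)=w(f|_{\partial S_+})=w(f|_{\partial S_-})=\chi(S_-).
\]
Reorienting $S_-$ does not change its Euler characteristic, so no sign is lost there. I also have to be careful with one degenerate situation: if the two sheets in the fold model appeared on opposite sides of $f(\Sigma)$, we would get $\chi(S_+)=-\chi(S_-)$, and by (i) this would force $\chi(S)=0$. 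The fold normal form rules this out, since both sheets always lie on the same side.

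Part (iii) follows by combining (ii) with the genus formula. Write $b_+^i$ for the number of boundary circles of $S_+^i$ and $b_-^j$ for that of $S_-^j$, so that $\chi(S_\pm^i)=2-2g_\pm^i-b_\pm^i$. Every component of $\Sigma$ bounds exactly one component of $S_+$ and exactly one component of $S_-$. Hence
\[
\sum_i b_+^i=|\Sigma|=\sum_j b_-^j.
\]
Summing the genus formula over components gives
\[
\chi(S_+)=2\#|S_+|-2\sum_i g_+^i-|\Sigma|,
\]
and similarly for $\chi(S_-)$. Setting these equal by (ii), the $|\Sigma|$ terms cancel, and dividing by $2$ yields $\#|S_+|-\#|S_-|=\sum_i g_+^i-\sum_j g_-^j$.
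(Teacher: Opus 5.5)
Your proof is correct. Parts (i) and (iii) match the paper's argument: inclusion--exclusion with $\chi(\Sigma)=0$, then summing $\chi(S^i_\pm)=2-2g^i_\pm-k^i_\pm$ over components and cancelling the $|\Sigma|$ terms.

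For (ii) you take a genuinely different route. The paper simply quotes Eliashberg's Theorem~B, using only its necessity half (a fold set $V$ forces $\chi(S_1)=\chi(S_2)$). You instead prove that half directly. You apply the winding-number identity $\chi=w$ to $f|_{S_+}$ and to $f$ on $S_-$ with reversed orientation, and compare the two boundary terms along $\Sigma$. Your argument is elementary and uses only tools already in the paper. It also makes explicit what is actually used: the local fold normal form (no cusps). Simplicity and stability are not needed, and neither is the deep sufficiency direction of Eliashberg's $h$-principle. The paper's citation is shorter but opaque, and its paraphrase says ``immersion'' where a fold map is meant.

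Two points should be tightened.

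First, $df$ annihilates the normal direction along $\Sigma$. So $f|_{S_\pm}$ is not an immersion at its boundary, and $df(\nu)=0$ on $\Sigma$ itself. Apply the identity instead to $S_\pm$ with a thin collar removed, bounded locally by $\{x_2=\pm\epsilon\}$ in the model $(x_1,x_2)\mapsto(x_1,x_2^2)$ with oriented charts. Both push-offs then have the same image $\{y_2=\epsilon^2\}$. The outward normals $-\partial_{x_2}$ (at $x_2=\epsilon$) and $+\partial_{x_2}$ (at $x_2=-\epsilon$) both map to $-2\epsilon\,\partial_{y_2}$. This is the precise form of your ``same side'' claim, and homotopy invariance of degree handles the rest.

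Second, your aside about the degenerate case is unnecessary, and its sign claim is wrong. The antipodal map of $S^1$ has degree $+1$, so replacing a Gauss map by its negative does not change its degree. Either unit normal of an immersed closed curve has degree equal to the curve's rotation number with respect to the chosen orientation. The equality $\deg\nu_+=\deg\nu_-$ therefore rests on the orientation bookkeeping you describe: the boundary orientations induced by $S_+$ and by $S_-$ with reversed orientation agree along $\Sigma$, and the push-offs are parametrized compatibly by $x_1$.
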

    \begin{proof}
        Part ($i$) is immediate since $\chi(S)=\chi(S_+) + \chi(S_-) - \chi(\Sigma)$ and $\Sigma$ is a collection of circles. 
        
        Part $(ii)$ is the application of a result of Èliašberg \cite[Theorem B]{El70} to our specific case and constructions. Èliašberg proved that for a one-dimensional submanifold $V$ of a closed oriented surface $S$, there exists an immersion $f:S \rightarrow \R^2$ such that $\Sigma = V$ if and only if $V$ splits $S$ into two submanifolds $S_1$ and $S_2$, such that $\partial S_1 = \partial S_2$ and $\chi(S_1) = \chi(S_2)$. In our context, we have an immersion $f:S \rightarrow \R^2$  with singular set $\Sigma$ if and only if $\Sigma =\partial S_+ = \partial S_-$ and $\chi(S_+) = \chi(S_-)$.

        Part $(iii)$ is a consequence of part $(ii)$. Indeed, since $\chi(S_+)=\chi(S_-)$ we see:
            \begin{align*}
                \chi(S_+)&=\chi(S_-)\\[0.1in]
                \Rightarrow \sum_i\chi(S_+^i)&=\sum_j\chi(S_-^j)\\
                \Rightarrow 2(\#|S_+|)-|\Sigma|-\sum_{i}2g_+^i &=2(\#|S_-|)-|\Sigma|-\sum_{j}2g_-^j \\
                % \#|S_+|-\sum_{i}g_+^i &=\#|S_-|-\sum_{j}g_-^j\\
                \Rightarrow \#|S_+|-\#|S_-|&=\sum_{i}g_+^i-\sum_{j}g_-^j
            \end{align*}

    \end{proof}

   To construct a minimum for $\Delta_{\Sigma}$, we now examine the path components $\{S_+^i\}$ and $\{S_-^j\}$ of the splitting. Note, each path component is an oriented surface with boundary immersed via $f$ into $\R^2$. Thus, using methods from Section \ref{extending_guth}, we may examine the count, $\Delta_{+}^{i}$, respectively $\Delta_{-}^{j}$, of self-intersections of the boundaries of each path component $f(S^i_+)$, respectively $f(S^j_-)$.

    \begin{lemma}\label{connecting_sing_to_boundaries}
        If $f\co S\to \R^2$ is a simple fold map with splitting $S=S_+\cup S_-$ and fold singular set $\Sigma=S_+\cap S_-$, then the number of self-intersections of the fold singularities is equal to the sum of all self-intersections of the boundaries divided by two.
            \[\Delta_{\Sigma}=\sum_{i}\Delta_+^i=\sum_{j}\Delta_-^j=\frac{1}{2}\left(\sum_{i}\Delta_+^i+\sum_{j}\Delta_-^j\right)\]
    \end{lemma}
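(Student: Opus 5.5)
The plan is to compare, crossing by crossing, the self-intersections of $f(\Sigma)$ with the self-intersections of the immersed boundaries $f(\partial S_+^i)$ and $f(\partial S_-^j)$. The key structural fact is that $\Sigma=\partial S_+=\partial S_-$. The boundary circles of the components $\{S_+^i\}_i$ are therefore exactly the components $(\Sigma_i^j)_m$ of $\Sigma$, each occurring once, and the same is true for the components $\{S_-^j\}_j$. Consequently
\[
\bigsqcup_i f(\partial S_+^i)=f(\Sigma)=\bigsqcup_j f(\partial S_-^j)
\]
as immersed curves in $\R^2$. This gives the first two equalities once we settle what "self-intersection of the boundary" counts.

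First I will fix that convention. Here $\Delta_+^i$ counts all transverse double points of $f(\partial S_+^i)$. Following the $\Delta$ of Section \ref{extending_guth}, this includes both self-intersections of a single component and intersections between different boundary components of the same $S_+^i$. It does \emph{not} include crossings between a boundary curve of $S_+^i$ and a boundary curve of some other $S_+^{i'}$.

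Second, I will show that every double point of $f(\Sigma)$ is a double point of $f(\partial S_+^i)$ for exactly one $i$. Take a double point $x=f(a)=f(b)$ with $a\neq b\in\Sigma$. Stability guarantees that $f(\Sigma)$ has only normal crossings, so no triple points occur and each double point is counted exactly once in $\Delta_\Sigma$. The remaining task is to show that $a$ and $b$ lie on the boundary of the \emph{same} component of $S_+$, and likewise on the boundary of the same component of $S_-$.

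This is where the simple fold hypothesis enters. I expect this step to be the main obstacle, since it is the only one that uses more than bookkeeping. I would argue through the local model at a crossing of two fold curves. Near $x$, the images of small neighbourhoods of $a$ and $b$ are two folded sheets. Each sheet has a $+$ half and a $-$ half meeting along its fold line. The relevant fibre component $f^{-1}$ of a point near $x$ connects the sheets. By the simple fold condition, the fibre component through $a$ meets $\Sigma$ only at $a$. I would then use this to show that the fibre structure over a neighbourhood of $x$ links the $+$-side near $a$ with the $+$-side near $b$ inside a single component of $S_+$, and similarly for the $-$ sides. Concretely, the fibre component through $a$ continues to a regular point, and following it over the quadrant between the fold images should produce a path in $S_+$ from the $+$-side of $a$ to the $+$-side of $b$.

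If this local argument proves delicate, I would fall back on the interpretation that makes the lemma almost tautological. Since the two decompositions $\{\partial S_+^i\}$ and $\{\partial S_-^j\}$ are the same curve $f(\Sigma)$, I would define $\Delta_\pm$ as the crossings of $f(\Sigma)$ attributed to the components via their boundaries, which is how the lemma will be used later.

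Third, once both $\sum_i\Delta_+^i=\Delta_\Sigma$ and $\sum_j\Delta_-^j=\Delta_\Sigma$ are established, the final equality is immediate by averaging the two expressions:
\[
\Delta_\Sigma=\tfrac12\Bigl(\textstyle\sum_i\Delta_+^i+\sum_j\Delta_-^j\Bigr).
\]
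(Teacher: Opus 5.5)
\textbf{There is a genuine gap, in exactly the step you flagged.} Your second step needs the following: whenever $f(a)=f(b)$ with $a\neq b$ in $\Sigma$, the points $a$ and $b$ lie on the boundary of one and the same component $S_+^i$, and of one and the same $S_-^j$.

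This is false in general, and the simple fold hypothesis cannot supply it. For a map $f\co S\to\R^2$ between surfaces with $S$ compact, the fibres are finite. So the component of $f^{-1}(f(a))$ containing $a$ is just $\{a\}$, and the ``simple'' condition holds automatically. There is no fibre through $a$ that ``continues to a regular point'' and links the $+$-side of $a$ to the $+$-side of $b$.

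For a concrete failure, let $S$ be two disjoint round spheres in $\R^3$ whose vertical shadows overlap, and let $f$ be vertical projection. The equators are the fold curves. Their images cross transversally in two points, yet they bound different components of $S_+$ and of $S_-$. Joining the spheres by a thin, slightly tilted neck, whose waist becomes a third fold circle, gives a connected example with the same feature.

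With your conventions, where $\Delta_\Sigma$ counts every double point of $f(\Sigma)$, this example has $\Delta_\Sigma=2$ but $\sum_i\Delta_+^i=\sum_j\Delta_-^j=0$. In general your conventions only give $\Delta_\Sigma\ge\sum_i\Delta_+^i$ and $\Delta_\Sigma\ge\sum_j\Delta_-^j$, because the sets of crossings counted for different indices are disjoint. The two sums need not even agree with each other: a crossing between two boundary circles of one $S_+^i$ that bound different $S_-^j$'s is counted on the $+$ side only.

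\textbf{How the paper avoids this.} The paper's proof counts only self-intersections of individual curves, both in $\Delta_\Sigma$ and in $\Delta_\pm$. Each component $(\Sigma_i^j)_m$ of $\Sigma$ is an edge $(e_i^j)_m$ of the bipartite graph $G$, so
\[
\Delta_\Sigma=\sum_{i,j,m}\bigl(n_+((e_i^j)_m)+n_-((e_i^j)_m)\bigr).
\]
Grouping the edges at $v_+^i$ gives $\Delta_+^i=\sum_j\partial_i^j$, and grouping at $v_-^j$ gives $\Delta_-^j$. Every edge has exactly one endpoint of each type, so both regroupings recover $\Delta_\Sigma$.

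This is precisely the structural fact in your first paragraph. To get a correct proof, adopt this per-curve count (that is the precise form your fallback should take) and delete the second step. This convention is also enough for the later use of the lemma: the proof of Theorem~\ref{Thm:Self_Int_of_Boundary} actually bounds $N^++N^-$, the number of self-intersections of the individual boundary curves.
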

    \begin{proof}
        Let us first observe that the number of self-intersections of $f(\Sigma)$, denoted by $\Delta_{\Sigma}$, is equal to the sum of the self-intersections of each of the path components of $f(\Sigma)$. Each path component of $f(\Sigma)$ is represented by an edge $(e_i^j)_m$ in the graph $G$. Therefore, $\Delta_{\Sigma}$ is the sum of $n_+((e_{i}^j)_m)$ and $n_-((e_{i}^j)_m)$ across all edges:
            \[\Delta_{\Sigma}=\sum_{i,j,m}n_+((e_{i}^j)_m)+n_-((e_{i}^j)_m)\]
        
        Second, we examine the self-intersections of boundaries. If two path components share a boundary, i.e. $S_+\cap S_-\neq\emptyset$, we call them \textit{adjacent} path components.  Given two adjacent path components $S_+^i$ and $S_-^j$ denote $\partial_i^j$ as the sum of all self-intersections of all boundary path components between $S_+^i$ and $S_-^j$.
            \[\partial_i^j=\sum_{m=1}^{|e_i^j|} n_+((e_{i}^j)_m)+ n_-((e_{i}^j)_m)\]
        
        Recall that $\Delta_+^i$ represents the number of self-intersections of \textbf{all} boundaries on the path component $S_+^i$. For a fixed $i$ let $J$ be an indexing set representing all path components $S_-^j$ that are adjacent to $S_+^i$. Summing across all boundaries between all adjacent path components, we have that:
            \[\Delta_+^i=\sum_{j\in J}\partial_i^j\]
        A analogous equation can also be made for $\Delta_-^j$. From here, it is clear to see that by summing across all $\Delta_+^i$ we have calculated $\Delta_{\Sigma}$.
    \end{proof}

    The following lemma extends the results of Theorem \ref{Thm:Self_Int_of_Boundary} to the self-intersections of fold singularities.

    \begin{lemma}\label{Thm:Self_int_of_folds_are_max}
         If $f\co S\to \R^2$ is a simple fold map with splitting $S=S_+\cup S_-$ that has no planar components whose fold singular set is $\Sigma=S_+\cap\, S_-$, then the number of self-intersections of fold singularities has a lower bound given by
            \[\Delta_{\Sigma}\geq\max\set{\sum_{i}2g_+^i+2,\sum_{j}2g_-^j+2}\]
        where $g_+^i$, respectively $g_-^j$, is the genus of each path component $S_+^i$, respectively $S_-^j$.
    \end{lemma}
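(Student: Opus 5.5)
We combine Lemma \ref{connecting_sing_to_boundaries} with Theorem \ref{Thm:Self_Int_of_Boundary}, applied one path component at a time. By Lemma \ref{connecting_sing_to_boundaries},
\[\Delta_{\Sigma}=\sum_i\Delta_+^i=\sum_j\Delta_-^j,\]
so it suffices to show $\sum_i\Delta_+^i\geq \sum_i 2g_+^i+2$ and the analogous bound for $S_-$; taking the larger of the two gives the maximum. The argument is symmetric, so we treat $S_+$ only.

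\textbf{Immersion step.} Fix a path component $S_+^i$. On its interior $f$ is an orientation preserving immersion, and the image of its boundary is the fold curve. We push $\partial S_+^i$ slightly into the interior along a collar; near a fold point $(x_1,x_2)\mapsto(x_1,x_2^2)$ the restriction to $\{x_2\ge\epsilon\}$ is an immersion. This gives an immersion of a surface diffeomorphic to $S_+^i$ whose boundary image is $C^1$-close to $f(\partial S_+^i)$. Because $f(\Sigma)$ has only transverse double points, a small push-off has exactly the same number of double points. Hence $\Delta_+^i$ equals the number of self-intersections of the boundary of an immersed compact oriented surface of genus $g_+^i\geq1$ (no component is planar) with $k_i$ boundary components. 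Theorem \ref{Thm:Self_Int_of_Boundary} then gives $\Delta_+^i\geq 4-k_i-\chi(S_+^i)=2g_+^i+2$.

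\textbf{Summation step.} Summing over $i$ gives $\sum_i\Delta_+^i\geq\sum_i(2g_+^i+2)$. This is at least $\sum_i 2g_+^i+2$ because $\#|S_+|\geq1$; the per-component bound is actually stronger, adding $2$ for every component. Alternatively, apply Theorem \ref{Thm:Self_Int_of_Boundary} once to the disconnected surface $S_+$, which that theorem explicitly allows. Then $k=|\Sigma|$ and $\chi(S_+)=2\#|S_+|-2\sum g_+^i-|\Sigma|$, so
\[\Delta\geq 4-2\#|S_+|+2\sum_i g_+^i,\]
which is weaker. The component-wise route is therefore the one to use.

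\textbf{Main obstacle.} The delicate point is the immersion step. We must check that the push-off neither creates nor destroys double points. We also need that, in Lemma \ref{connecting_sing_to_boundaries}, each double point of $f(\Sigma)$ is counted once in $\sum_i\Delta_+^i$. In particular, an intersection between fold curves bounding different components $S_+^i$ and $S_+^{i'}$ is not a self-intersection of $\partial S_+^i$ or of $\partial S_+^{i'}$. For the lower bound this only helps, since such crossings add to $\Delta_\Sigma$ without being used. So I would state the inequality as $\Delta_\Sigma\geq\sum_i\Delta_+^i$, which is all that is needed.
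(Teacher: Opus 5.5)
Your proposal is correct and follows the paper's proof: apply Theorem \ref{Thm:Self_Int_of_Boundary} to each non-planar component to get $\Delta_+^i\geq 2g_+^i+2$ (resp.\ $\Delta_-^j\geq 2g_-^j+2$), then sum using Lemma \ref{connecting_sing_to_boundaries}, reading the bound as $\sum_i(2g_+^i+2)$, which is how the paper itself uses it in the proof of Theorem \ref{Thm:Main_Theorem_2025}. Your collar push-off, which turns each $S_+^i$ into a genuinely immersed surface, and your weakening of Lemma \ref{connecting_sing_to_boundaries} to $\Delta_\Sigma\geq\sum_i\Delta_+^i$ (the equality need not hold when distinct fold curves cross) are sound refinements of details the paper passes over.
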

    \begin{proof}
        The path components of the splitting $S^{i}_{+}$ and $S^{j}_{-}$ are each oriented surfaces with boundary immersed via $f$ into $\R^2$. Recall, $\Delta_+^i$, respectively $\Delta_-^j$, denotes the count of self-intersections of the boundaries of $S^{i}_{+}$, respectively $S^{j}_{-}$. By Theorem \ref{Thm:Self_Int_of_Boundary}, we have a lower bound for the number of self-intersections of the boundaries of $S^{i}_{+}$ and $S^{j}_{-}$:
            \[\Delta_+^i\geq 2g_+^i+2\qquad\text{and}\qquad \Delta_-^j\geq 2g_-^j+2\]
        Utilizing this fact and Lemma \ref{connecting_sing_to_boundaries}, we see:
            \begin{align*}
                \Delta_{\Sigma}
                &=\sum_{i}\Delta_+^i   & \Delta_{\Sigma}&=\sum_{j}\Delta_-^j\\
                &\geq \sum_{i}2g_+^i+2 & &\geq \sum_{j}2g_-^j+2
            \end{align*}
        Thus, $\Delta_{\Sigma}$ must be greater than or equal to both of its lower bounds. Hence, we can clearly see that:
            \[\Delta_{\Sigma}\geq\max\set{\sum_{i}2g_+^i+2,\sum_{j}2g_-^j+2}\]
    \end{proof}

\section{The Main Theorems}\label{section:MainTheorems}

    We are now equipped to state and prove the main results of the paper. 

    \begin{theorem}\label{Thm:Main_Theorem_2025}
        Let $f\co S\to \R^2$ be a simple fold map with splitting $S=S_+\cup S_-$ that has no planar components whose fold singular set is $\Sigma=S_+\cap\, S_-$. If $\abs{\#|S_+|-\#|S_-|}=n$, then the number of self-intersections of fold singularities
            \begin{align*}
                \Delta_{\Sigma}&\geq 4\max\set{\#|S_+|,\#|S_-|}-(\chi(S)/2+|\Sigma|)\\[0.1in]
                &=2(\rho(G)+1+n)-(\chi(S)/2+|\Sigma|)
            \end{align*}
        where $|\Sigma|$ is the number of path components of fold-singularities, and $\rho(G)$ is the number of edges in a spanning tree of the graph $G$ corresponding to $f$.
    \end{theorem}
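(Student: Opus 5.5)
The plan is to apply Theorem \ref{Thm:Self_Int_of_Boundary} to each path component of the splitting separately and sum, rather than to quote Lemma \ref{Thm:Self_int_of_folds_are_max} as written. The summed per-component bounds retain a term $2\#|S_\pm|$ that the $\max$ there drops. Each $S_+^i$ is a compact, oriented, connected surface of genus $g_+^i\ge 1$, since there are no planar components. Its boundary has $k_+^i$ path components, and it is immersed in $\R^2$ by $f|_{S_+^i}$, with $f$ regular on the interior of $S_+^i$. Theorem \ref{Thm:Self_Int_of_Boundary} therefore gives
\[
\Delta_+^i\ \ge\ 4-k_+^i-\chi(S_+^i)\ =\ 2g_+^i+2 .
\]
Summing over $i$ and using Lemma \ref{connecting_sing_to_boundaries} yields
\[
\Delta_\Sigma=\sum_i\Delta_+^i\ \ge\ 2\sum_i g_+^i+2\,\#|S_+|,
\]
and in the same way $\Delta_\Sigma\ge 2\sum_j g_-^j+2\,\#|S_-|$.

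Next I eliminate the genera using Lemma \ref{l:split}. Parts (i) and (ii) give $\chi(S_+)=\chi(S_-)=\chi(S)/2$. Each circle of $\Sigma$ bounds exactly one component of $S_+$, so
\[
\chi(S_+)=\sum_i\bigl(2-2g_+^i-k_+^i\bigr)=2\#|S_+|-2\sum_i g_+^i-|\Sigma|,
\]
which gives $2\sum_i g_+^i = 2\#|S_+|-|\Sigma|-\chi(S)/2$. Substituting this, $\Delta_\Sigma\ge 4\#|S_+|-(\chi(S)/2+|\Sigma|)$. The symmetric computation gives the same bound with $\#|S_-|$. Taking the larger of the two bounds proves the first line of the theorem.

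For the equality with the second expression, I will use that $S$ is closed and connected. Then the graph $G$ is connected, with $\#|S_+|+\#|S_-|$ vertices, so by Definition \ref{def:rhoG} we have $\rho(G)=\#|S_+|+\#|S_-|-1$. Since $2\max\{a,b\}=a+b+|a-b|$, it follows that $4\max\{\#|S_+|,\#|S_-|\}=2(\rho(G)+1+n)$.

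The main obstacle is checking that the hypotheses of Theorem \ref{Thm:Self_Int_of_Boundary} hold for each $f|_{S_\pm^i}$. It must be an immersion up to the boundary, and its boundary image must be a normal immersed curve. Both come from the fold normal form together with stability. The coorientation convention of the paper should also be checked against the orientation reversal on $S_-$; I would handle this by precomposing with an orientation-reversing reflection of $\R^2$, which does not change self-intersection counts.

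A second point to check is that the count $\Delta_+^i$ in Lemma \ref{connecting_sing_to_boundaries} includes crossings between different boundary circles of the same component, as $\Delta$ does in Theorem \ref{Thm:Self_Int_of_Boundary}. Otherwise the summation step loses those crossings. If necessary, I would note that $\Delta_\Sigma$ counts all crossings of $f(\Sigma)$, so it is at least the sum of the full per-component counts, and the inequality still holds.
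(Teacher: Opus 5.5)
Your proposal is correct and is essentially the paper's proof: you apply Theorem \ref{Thm:Self_Int_of_Boundary} to each $S_\pm^i$, sum via Lemma \ref{connecting_sing_to_boundaries}, eliminate the genera with Lemma \ref{l:split}, and use $\rho(G)=\#|S_+|+\#|S_-|-1$. The paper's Lemma \ref{Thm:Self_int_of_folds_are_max} is meant as $\sum_i(2g_+^i+2)$, as the next line of its proof shows, so nothing is actually dropped there. One small correction: at fold points $df$ has rank one, so the fold normal form shows that $f|_{S_\pm^i}$ is \emph{not} an immersion up to the boundary; the standard fix is to apply Theorem \ref{Thm:Self_Int_of_Boundary} to $S_\pm^i$ minus a thin collar along $\Sigma$, whose boundary image is a $C^1$-small push-off of $f(\Sigma)$ with the same crossings.
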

    
    \begin{proof}
        Recall, each path component of the splitting $S_+^i$, respectively $S_-^j$, is an immersed surface with boundary. Let $g_+^i$, $\chi(S_+^i)$, and $k_+^i$ denote the genus, Euler characteristic, and number of boundary components of $S_+^i$ respectively, and let $g_-^j$, $\chi(S_-^j)$, and $k_-^j$ denote the analogous properties of $S_-^j$. We note that $\sum_{i}k_+^i=\sum_{j}k_-^j=|\Sigma|$, since these boundaries are the fold singularities. Furthermore, note that for each path component $S_+^i$, respectively $S_-^j$, we have $2g_+^i+2=4-k_+^i-\chi(S_+^i)$, respectively $2g_-^j+2=4-k_-^j-\chi(S_-^j)$. Also, by Lemma \ref{l:split}, $\chi(S_+)=\chi(S_-)$ and $\chi(S)=\chi(S_+)+\chi(S_-)$. Thus, Lemma \ref{Thm:Self_int_of_folds_are_max} may be restated as
            \begin{align*}
                \Delta_{\Sigma}
                &\geq\max\set{\sum_{i}2g_+^i+2,\sum_{j}2g_-^j+2}\\
                &=\max\set{\sum_{i}(4-k_+^i-\chi(S_+^i)),\sum_{j}(4-k_-^j-\chi(S_-^j))}\\
                &=\max\set{4(\#|S_+|)-|\Sigma|-\chi(S_+),4(\#|S_-|)-|\Sigma|-\chi(S_-)}\\
                &=4\max\set{\#|S_+|,\#|S_-|}-|\Sigma|-\frac{1}{2}\chi(S)
            \end{align*}
    
        Finally, recall that $\rho(G)$ is the number of edges in a spanning tree of the graph $G$ corresponding to $f$ and, for bipartite graphs associated with the splitting of a surface, is one less than the total number of path components of the splitting; that is $\rho(G)=\#|S_+|+\#|S_-|-1$. Therefore, if $\abs{\#|S_+|-\#|S_-|}=n$ we can see that $2(\rho(G)+1+n)=4\max\set{\#|S_+|,\#|S_-|}$. Thus, we conclude
            \[\Delta_{\Sigma}\geq 2(\rho(G)+1+n)-(\chi(S)/2+|\Sigma|)\]
    \end{proof}

    \begin{corollary}
        If $f\co S\to \R^2$ is a simple fold map with splitting $S=S_+\cup S_-$ that has no planar components whose fold singular set is $\Sigma=S_+\cap\, S_-$, such that $g=\rho(G) + 1$  and $\#|S_+|=\#|S_-|$, then the number of self-intersections of fold singularities $\Delta_{\Sigma} \geq 2g$.
            % \[
            % \Delta_{\Sigma} \geq 2g %=3g - (|\Sigma| + 1)
            % \]
    \end{corollary}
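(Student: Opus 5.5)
The plan is to specialize Theorem \ref{Thm:Main_Theorem_2025} to the hypotheses and then show that the leftover term is nonnegative. The positivity will come from a genus count over the splitting.

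First, $\#|S_+|=\#|S_-|$ gives $n=0$. Since $\chi(S)=2-2g$, we have $\chi(S)/2=1-g$. Substituting $g=\rho(G)+1$, Theorem \ref{Thm:Main_Theorem_2025} becomes
\[
\Delta_\Sigma\geq 2(\rho(G)+1)-(1-g)-|\Sigma| = 2g+(g-1-|\Sigma|).
\]
So it suffices to prove $|\Sigma|\leq g-1=\rho(G)$.

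To get this inequality I would compute the genus of $S$ from the splitting. $S$ is obtained by gluing the components $S_+^i$ and $S_-^j$ along the $|\Sigma|$ circles of $\Sigma$, and the gluing pattern is the graph $G$. That graph has $\#|S_+|+\#|S_-|=\rho(G)+1$ vertices and $|\Sigma|$ edges, counted with multiplicity. Since $G$ is connected, its first Betti number is $|\Sigma|-\rho(G)$. Hence
\[
g=\sum_i g_+^i+\sum_j g_-^j+|\Sigma|-\rho(G).
\]
One can check this with Euler characteristics: $\chi(S)=\sum_i\chi(S_+^i)+\sum_j\chi(S_-^j)$ and $\chi(S_\pm^i)=2-2g_\pm^i-k_\pm^i$, with $\sum_i k_+^i=\sum_j k_-^j=|\Sigma|$.

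Now impose $g=\rho(G)+1$. This gives
\[
|\Sigma|=2\rho(G)+1-\Big(\sum_i g_+^i+\sum_j g_-^j\Big).
\]
By hypothesis there are no planar components, so each of the $\rho(G)+1$ components has genus at least $1$. Therefore the genus sum is at least $\rho(G)+1$, which yields $|\Sigma|\leq\rho(G)=g-1$, as required.

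The main point to get right is the genus formula, in particular the bookkeeping of edge multiplicities against the spanning-tree count $\rho(G)$. Once that is settled, the corollary also follows from item (ii) of Lemma \ref{lemma:limitations on simple fold maps}, which states $g>|\Sigma|$.
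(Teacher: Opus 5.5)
Your argument is correct and follows the paper's route: specialize Theorem~\ref{Thm:Main_Theorem_2025} with $n=0$, $\rho(G)+1=g$ and $\chi(S)/2=1-g$. The paper's one-line computation simply substitutes $|\Sigma|=g-1$ without comment. Your genus count $g=\sum_i g_+^i+\sum_j g_-^j+|\Sigma|-\rho(G)$ (equivalently, item (ii) of Lemma~\ref{lemma:limitations on simple fold maps}) supplies the justification it omits. Together with $|\Sigma|\geq\rho(G)$, which follows from connectivity of $G$, it shows the hypotheses force $|\Sigma|=g-1$, so the bound is exactly $2g$ as the paper writes.
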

\begin{proof}
        Suppose the surface $S$ has genus $g$. (Note, $g\geq 2$ or we will have a planar component of the splitting.) By Theorem \ref{Thm:Main_Theorem_2025}, we see:
            \[\Delta_{\Sigma}\geq 2(\rho(G)+1+n)-(\chi(S)/2+|\Sigma|)= 2(g+0)-((1-g)+(g-1))=2g.\]
    \end{proof}

    \begin{remark}\label{remark:admissible combo}
        Given a simple fold map of a surface $S$ with no planar components, there are numerous limitations on the genus $g$, the fold singular set $\Sigma$, and the path components of the splitting $S=S_+\cup S_-$. These limitations are largely combinatorial and due to both how fold maps induce splittings on surfaces and our restriction to splittings with no planar components. Given the construction of splittings in Section \ref{section:SplittingAndGraphConstruction} and that $\Sigma=S_+\cap S_-$, the proof of Lemma \ref{lemma:limitations on simple fold maps} for parts $(i)$ through $(iii)$ is apparent. We do emphasize that part $(iv)$ is highly nontrivial. Condition $(iv)$ was proven by K\'alm\'an in \cite[Proposition 1.7]{Kalman_2000}, and it is included here for completeness.
    \end{remark}

    \begin{lemma}\label{lemma:limitations on simple fold maps}
        Let $S$ be a closed, oriented surface of genus $g$. If $f\co S\to \R^2$ is a simple fold map with splitting $S=S_+\cup S_-$ that has no planar components whose fold singular set, denoted by $\Sigma=S_+\cap S_-$, is nonempty, then the following are true:
            \begin{enumerate}[label=(\roman*),itemsep=0.15cm]
                \item $g\geq 2$
                \item $g>|\Sigma|\geq1$
                \item $|\Sigma|\geq\#|S_+|+\#|S_-|-1 \geq1$
                \item $|\Sigma|-g\equiv 1\mod 2$
            \end{enumerate}
    \end{lemma}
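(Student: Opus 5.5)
The plan is to prove (i)--(iii) by a purely combinatorial count on the splitting and its bipartite graph $G$, and to take (iv) from K\'alm\'an \cite[Proposition 1.7]{Kalman_2000}, as indicated in Remark \ref{remark:admissible combo}. Throughout, $S$ is closed, oriented and connected. Let $V=\#|S_+|+\#|S_-|$ be the number of vertices of $G$, and note that the number of edges of $G$, counted with multiplicity, is $|\Sigma|$.

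\textbf{Step 1: nonemptiness.} Since $\Sigma\neq\emptyset$, there is a fold curve, and both $S_+$ and $S_-$ are nonempty. Hence $V\geq 2$ and $|\Sigma|\geq 1$. Because $S$ is connected, $G$ is connected. A connected graph on $V$ vertices has at least $V-1$ edges, so $|\Sigma|\geq V-1=\#|S_+|+\#|S_-|-1\geq 1$, which is (iii). Equivalently, $|\Sigma|-V+1=|\Sigma|-\rho(G)\geq 0$ is the first Betti number of $G$.

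\textbf{Step 2: a genus formula.} For each component $S_c$ of the splitting, write $g_c$ for its genus and $k_c$ for its number of boundary circles, so that $\sum_c k_c=2|\Sigma|$. By Lemma \ref{l:split}(i),
\[
\chi(S)=\sum_c(2-2g_c-k_c)=2V-2\sum_c g_c-2|\Sigma|.
\]
Setting this equal to $2-2g$ gives
\[
g=\sum_c g_c+\bigl(|\Sigma|-V+1\bigr).
\]
This is the familiar statement that the genus of $S$ is the total genus of the pieces plus the number of independent cycles in $G$.

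\textbf{Step 3: conclude (ii) and (i).} The no-planar-components hypothesis gives $g_c\geq 1$ for every $c$, so $\sum_c g_c\geq V$. Substituting into the genus formula yields $g\geq V+|\Sigma|-V+1=|\Sigma|+1>|\Sigma|$. Combined with $|\Sigma|\geq1$ from Step 1, this is (ii). Then $g\geq|\Sigma|+1\geq 2$, which is (i).

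\textbf{Step 4: the parity condition (iv).} This is the genuine obstacle. Neither the combinatorics above nor Eliashberg's condition $\chi(S_+)=\chi(S_-)$ from Lemma \ref{l:split}(ii) controls the parity of $|\Sigma|-g$. For example, the genus formula only gives $|\Sigma|-g\equiv V-1+\sum_c g_c \pmod 2$, which is not determined by the data used so far. The parity constraint instead reflects a global property of simple fold maps: the regular fibres and the structure of the Stein factorization force it. I would not reprove it here and would simply cite K\'alm\'an's Proposition 1.7 \cite{Kalman_2000}.
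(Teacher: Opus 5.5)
Your proof of (i)--(iii) is correct. It is precisely the Euler characteristic and graph count that the paper leaves as ``apparent,'' and deferring (iv) to K\'alm\'an is what the paper does, so the proposal stands as written.

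What is wrong is the assertion in Step 4 that neither your combinatorics nor Eliashberg's condition controls the parity of $|\Sigma|-g$. In fact (iv) is a one-line consequence of Lemma \ref{l:split}. Part (iii) of that lemma says $\#|S_+|-\sum_i g_+^i=\#|S_-|-\sum_j g_-^j$, hence
\[
V-\sum_c g_c=2\Bigl(\#|S_+|-\sum_i g_+^i\Bigr)
\]
is even. Your own genus formula then gives $|\Sigma|-g=V-1-\sum_c g_c\equiv 1\pmod 2$.

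Equivalently, parts (i) and (ii) of Lemma \ref{l:split} give $\chi(S_+)=\tfrac12\chi(S)=1-g$. On the other hand, $\chi(S_+)=2\#|S_+|-2\sum_i g_+^i-|\Sigma|\equiv|\Sigma|\pmod 2$, which again yields (iv).

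So there is no hidden global input from regular fibres or a Stein factorization. For fold maps without cusps, (iv) is just the mod-$2$ consequence of $\chi(S_+)=\chi(S_-)$. You can make the lemma self-contained by replacing the citation with this computation. The paper's description of (iv) as highly nontrivial overstates the matter in this setting.
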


    \begin{definition}
        We call any collection of integers $g$, $|\Sigma|$, $\#|S_+|$, and $\#|S_-|$ satisfying the conclusion of Lemma \ref{lemma:limitations on simple fold maps}, an \emph{admissible combination}. 
    \end{definition}

    \begin{theorem}\label{thm:main theorem sharpness}
        Let $g$, $|\Sigma|$, $\#|S_+|$, and $\#|S_-|$ be an admissible combination. If $S$ is a closed, oriented surface with genus $g$, then there exists a simple fold map $f:S\to\R^2$ with splitting $S=S_+\cup S_-$, that has no planar components whose fold singular set is $\Sigma=S_+\cap S_-$, such that the number of self-intersections of fold singularities is
            \[\Delta_{\Sigma}= 2(\rho(G)+1+n)-(\chi(S)/2+|\Sigma|)\]
        where $|\Sigma|$ is the number of path components of fold-singularities, $\rho(G)$ is the number of edges in a spanning tree of the graph $G$ corresponding to $f$, and $\abs{\#|S_+|-\#|S_-|}=n$.
    \end{theorem}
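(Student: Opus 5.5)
Without loss of generality I will assume $\#|S_+|\geq \#|S_-|$, so that $n=\#|S_+|-\#|S_-|$. By the proof of Theorem \ref{Thm:Main_Theorem_2025}, the target value $2(\rho(G)+1+n)-(\chi(S)/2+|\Sigma|)$ equals $\sum_i (2g_+^i+2)$, the Guth-type bound of Theorem \ref{Thm:Self_Int_of_Boundary} summed over the components of $S_+$. By Lemma \ref{connecting_sing_to_boundaries}, $\Delta_\Sigma=\sum_i\Delta_+^i$. So it suffices to build a simple fold map in which every component $S_+^i$ is immersed with boundary realizing the sharp configuration of Proposition \ref{Prop:SpecificBoundaries}. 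Concretely, the boundary of $S_+^i$ should be $B^+_{i_0,1}\sqcup C^+_{i_1}\sqcup\cdots$, with no crossings between different components of $f(\Sigma)$. The components $S_-^j$ must then be immersed so that their boundaries are exactly these same curves, with the opposite coorientation.

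\textbf{Step 1 (combinatorics).} From an admissible combination I construct a bipartite multigraph $G$ with $\#|S_+|$ and $\#|S_-|$ vertices and $|\Sigma|$ edges. I take a spanning tree, a zig-zag path, and put the remaining $|\Sigma|-\rho(G)$ edges as extra multiplicities, which is possible by Lemma \ref{lemma:limitations on simple fold maps}(iii). I then assign genera $g_\pm^i\geq 1$ with two constraints. First, $\sum g_+^i-\sum g_-^j=n$, as Lemma \ref{l:split}(iii) requires. Second, the total genus must be correct:
\[
g=\sum_i g_+^i+\sum_j g_-^j+\bigl(|\Sigma|-\rho(G)\bigr).
\]
Solving these gives $2\sum_j g_-^j = g-|\Sigma|+\rho(G)-n$. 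By part (iv) of Lemma \ref{lemma:limitations on simple fold maps}, $g-|\Sigma|$ is odd. Since $\rho(G)+1-n=2\#|S_-|$ is even, $\rho(G)-n$ is also odd, so the right-hand side is even. The inequality $g>|\Sigma|$ from part (ii) should ensure that it is at least $2\#|S_-|$, so every genus can be taken $\geq 1$. This is the routine bookkeeping step, and I expect it to use exactly the admissibility conditions.

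\textbf{Step 2 (immersing the $S_+$ side).} For each $S_+^i$, with its $k_+^i$ boundary edges, I use Proposition \ref{Prop:SpecificBoundaries} to immerse it with boundary $B^+_{i_0,1}\sqcup C^+_{i_1}\sqcup\cdots$, having $2g_+^i+2$ self-intersections. I place the different $S_+^i$ in disjoint regions of the plane. I may also shrink or translate pieces via positive isotopy, using Lemma \ref{guth_extension_lemma_1}, so that boundary curves belonging to different edges do not cross.

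\textbf{Step 3 (immersing the $S_-$ side), the main obstacle.} Each $S_-^j$ must be immersed with boundary equal to the union of the edge-curves incident to it, but cooriented inward, i.e.\ bounding the curves $-B^+$ and $-C^+$. These curves may sit on several different $S_+^i$ in different regions of the plane. My first approach is to run the moves of Example \ref{Positive_Isotopy_Contrapositive} and Example \ref{splitting_Bs_example} in reverse, using Lemma \ref{le:1}. Positive concordances of type Lemma \ref{guth_extension_lemma_2} (adding $1$-handles) should join the relevant $-C^+$ and $-B^+$ curves into a connected surface of genus $g_-^j$; handles guided by arcs in the complement join curves lying on different $S_+^i$. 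Because $\sum_j g_-^j=\sum_i g_+^i - n$ is smaller than the $S_+$ side, there should be enough room to do this. The delicate points are that the arcs of the $1$-handles introduce no new crossings among the fold curves, and that the resulting genus is exactly $g_-^j$. I would try first the case $\#|S_-|=1$ with a single edge to each $S_+^i$, and then induct along the path tree.

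Finally, gluing $S_+$ and $S_-$ along $\Sigma$ with these immersions and inserting the standard fold model $(x_1,x_2)\mapsto(x_1,x_2^2)$ along each curve gives a fold map. It is simple because each fold curve is locally the only singular set in its fiber component. It is stable because all crossings are transverse double points. By the count in Step 2, $\Delta_\Sigma=\sum_i(2g_+^i+2)$, which is the claimed value.
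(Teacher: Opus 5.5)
Your reduction to $\sum_i(2g_+^i+2)$ and the genus bookkeeping in Step 1 are fine. One small exception: a spanning tree that is a path in a bipartite graph forces $n\le 1$, so for $n\ge 2$ you need a tree with leaves, which is what the paper's teeth are. The genuine failure is in Step 3, and it starts with a sign error. In the normal form $(x_1,x_2)\mapsto(x_1,x_2^2)$, both half-planes $x_2>0$ and $x_2<0$ map to the same side $y>0$ of the fold image. So the outward normals of $S_+$ and of $S_-$ along $\Sigma$ push forward to the \emph{same} coorientation. Each $S_-^j$, with orientation reversed so that $f$ is orientation preserving on it, must therefore bound the same cooriented curves $B^+$, $C^+$ as the adjacent $S_+^i$, not $-B^+$, $-C^+$.

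The target you set is in fact impossible. Since $w(-\gamma)=-w(\gamma)$, the identity $w(\partial S_-^j)=\chi(S_-^j)$ would force $\chi(S_-)=-\chi(S_+)=g-1>0$. But non-planar components give $\chi(S_-)<0$; indeed $\chi(S_-)=1-g$ by Lemma~\ref{l:split}. Already for $g=2$, $|\Sigma|=1$, $\#|S_+|=\#|S_-|=1$, the one-holed torus $S_-$ would have to bound $-B^+_{3,1}=B^-_{3,1}$. That curve has winding number $+1\neq\chi(S_-)=-1$ and is inner, contradicting Lemma~\ref{OnlyOneMeansOuter}. For the same reason your final gluing would not produce folds: if the two sheets lay on opposite sides of the image curve, the Jacobian would have the same sign on both sides.

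Even with the sign corrected, Step 3 is the entire content of the sharpness claim, and you leave it as a hope. Look at what it must achieve: $\sum_j\Delta_-^j=\Delta_\Sigma=\sum_i(2g_+^i+2)=\sum_j(2g_-^j+2)+4n$. So the $S_-$ side must carry exactly $4n$ crossings beyond its own Guth bounds. What you need is not \emph{room} but a controlled way to \emph{add} crossings to the $S_-$ boundaries while keeping them fillable.

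Placing the $S_+^i$ in disjoint discs is also self-defeating. The image $f(S_-^j)$ is compact and connected with frontier contained in $f(\partial S_-^j)$, so a connected $S_-^j$ cannot bound curves lying in two disjoint discs.

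The paper avoids the general filling problem by fixing a rigid splitting (spine, skull, teeth):
\begin{itemize}
\item vertebrae made of two genus-one pieces sharing a $B^+_{3,1}$ fold, chained together and attached to the skull by embedded $C^+_0$ folds;
\item a skull of two equal-genus pieces sharing one $B^+$ curve and several $C^+_0$'s;
\item $n$ one-holed tori (the teeth) with boundary $B^+_{3,1}$.
\end{itemize}
The negative skull piece gets the Guth-sharp boundary of Proposition~\ref{Prop:SpecificBoundaries}, with $C^+_0$ holes reserved for the teeth. Those holes are then pushed along $C^+_0\Rightarrow A^+_2\Rightarrow B^+_{3,1}$ (Example~\ref{Positive_Isotopy_Contrapositive}, Lemma~\ref{guth_extension_lemma_1}). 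This is exactly where the extra $4n$ crossings go. Existence of a fold map with that splitting is then taken from Eliashberg.
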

    \begin{proof}
        Suppose without loss of generality that $\#|S_+|\geq\#|S_-|$. We begin by constructing the surface $S$ by examining the surface path components and how they are glued together to form $S$. Recall, the set of folds of the simple fold map $f:S\to\R^2$ consists of the intersections of these components. We now construct the surface $S$. The desired surface $S$ is obtained by constructing three sub-surfaces: the so-called spine, the skull, and the teeth. 

        \begin{figure}[h]
            \centering
            \includegraphics[width=0.5\linewidth]{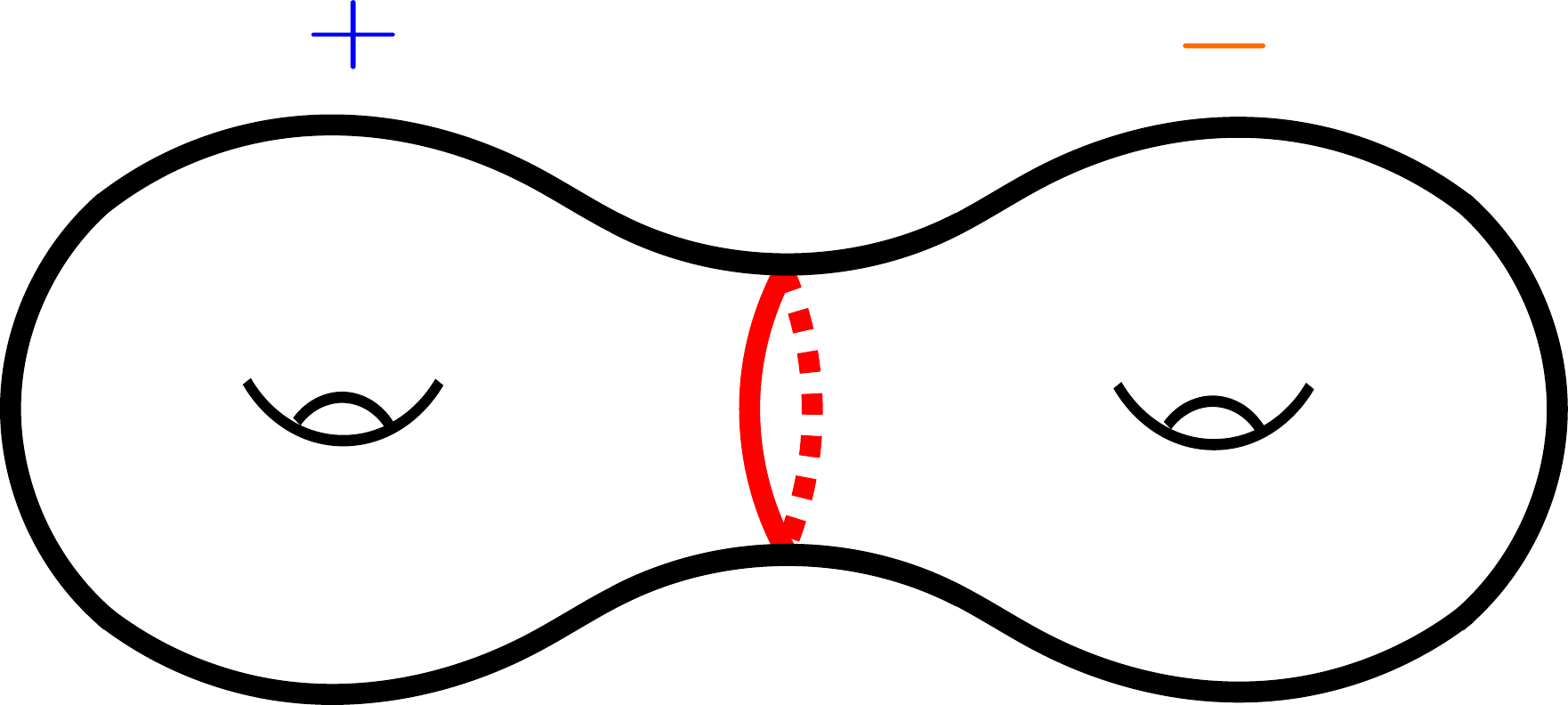}
            \caption{A single vertebrae}
            \label{fig:vertebrae}
        \end{figure}

        The \emph{spine} of the surface $S$ is comprised of multiple pairs of surface path components, and we call each of these pairs \textit{vertebrae}, see Figure \ref{fig:vertebrae}. Each vertebrae is constructed by gluing two 1-genus tori together, and the spine is constructed by gluing each vertebrae together end to end, see Figure \ref{fig:spine}. Note, not all of these gluings are trivial surgeries, and we describe them in more detail shortly. Each torus represents a surface path component of our eventual splitting, and the total number of surface path components is $2(\min\set{\#|S_+|,\#|S_-|}-1)$. Each vertebrae consists of a ``positive" component glued to a ``negative" component, and then the spine is glued in an alternating positive-negative pattern. Therefore, without loss of generality, the spine is a surface with:
        \begin{multicols}{2}
            \begin{itemize}[itemsep=0.1in]
                \item $\#|S_-|-1$ positive components,
                \item $\#|S_-|-1$ negative components,
                \item genus equal to $2(\#|S_-|-1)$, and
                \item will account for $2(\#|S_-|-1)-1$ fold curves.
            \end{itemize}\vspace{0.1in}
        \end{multicols}\

        \begin{figure}[h]
            \centering
            \includegraphics[width=0.75\linewidth]{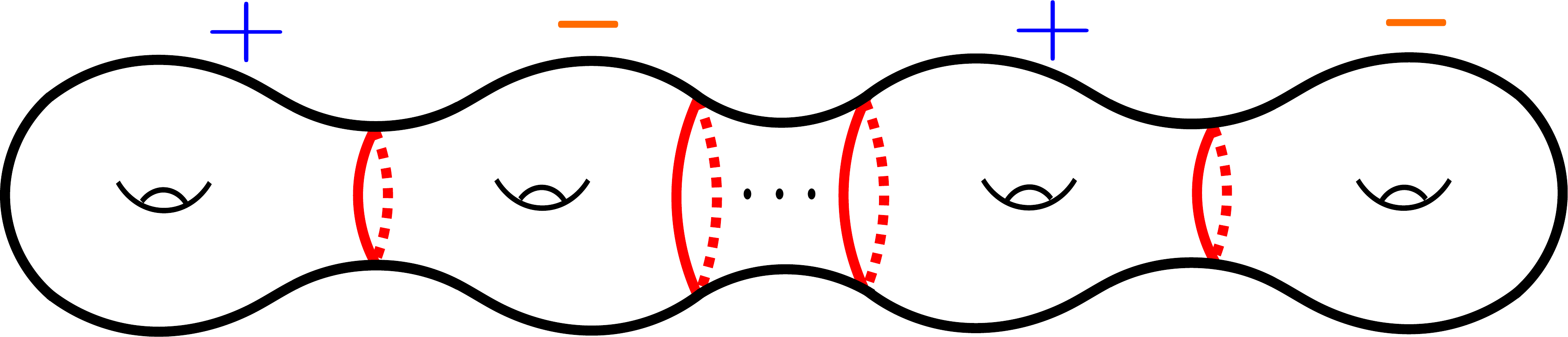}
            \caption{Spine created from multiple vertebrae}
            \label{fig:spine}
        \end{figure}

        The \emph{skull} of the surface $S$ is comprised of a pair of surface path components, one positive and one negative, see Figure \ref{fig:skull}. Each component in this pair is a surface of genus $\frac{1}{2}(g-|\Sigma|+1)$, and they are glued together by first selecting $|\Sigma|+2-(\#|S_+|+\#|S_-|)$ disjoint discs on each component. These discs then serve as attaching regions for 1-handle surgery between the two components. We again note that not all of these are trivial surgeries, and we soon describe them in more detail. The skull is what we call the result of this surgery, and it is a surface with:
            \begin{itemize}[itemsep=0.1in]
                \item $1$ positive component,
                \item $1$ negative component,
                \item genus equal to $g+2-(\#|S_+|+\#|S_-|)$, and
                \item will account for $|\Sigma|+2-(\#|S_+|+\#|S_-|)$ fold curves.
            \end{itemize}\vspace{0.1in}

        Finally, the \emph{teeth} of the surface $S$ is a disjoint collection of $\abs{\#|S_+|-\#|S_-|}$ genus $1$ tori, see Figure \ref{fig:teeth}. Recall, we assumed, without loss of generality, that $\#|S_+|\geq\#|S_-|$, so every component of the teeth will be a positive component.

        \begin{figure*}[h]
            \centering
            \begin{subfigure}{0.5\textwidth}
                \centering
                \includegraphics[width=0.5\linewidth]{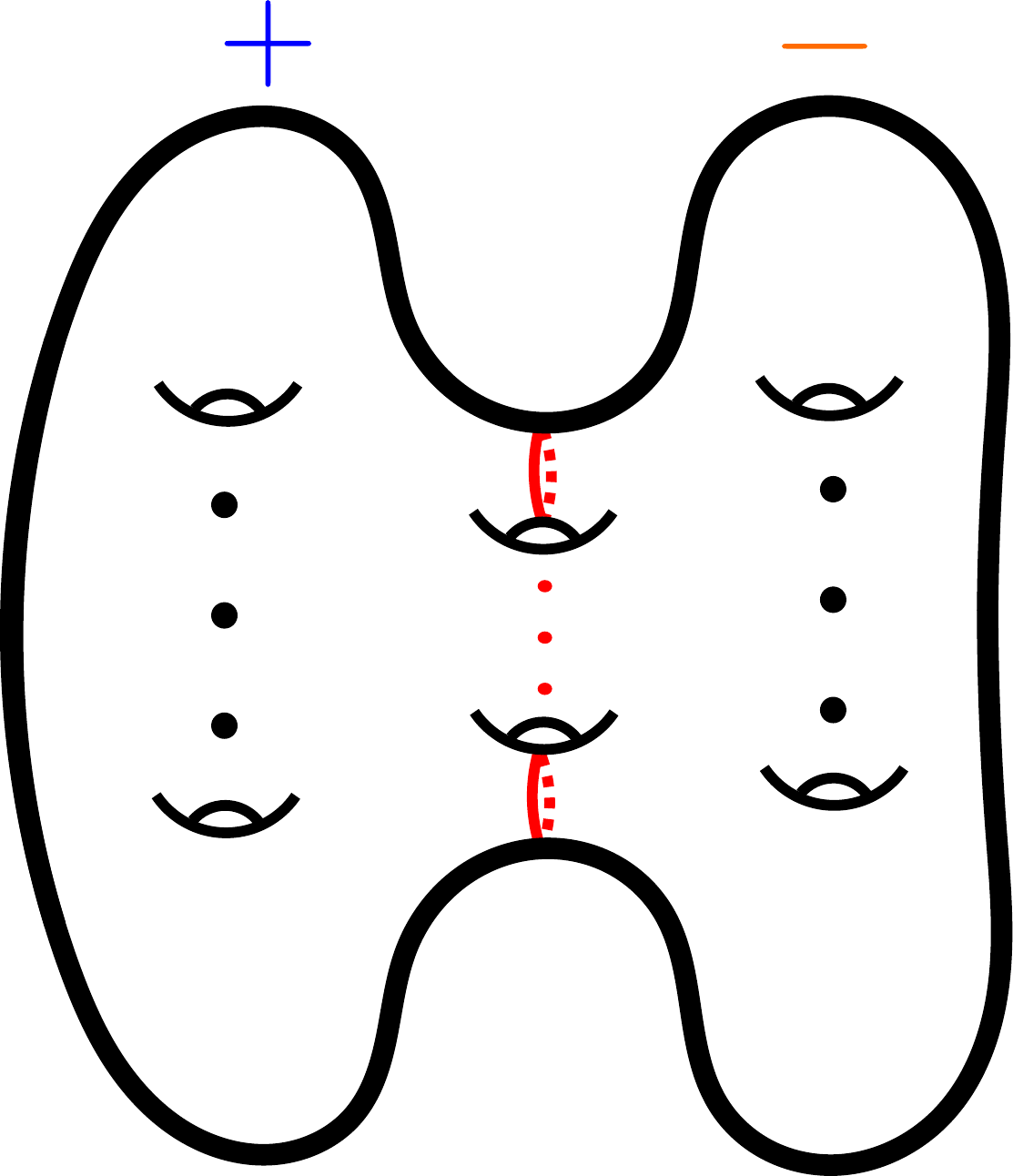}
                \caption{The Skull}
                \label{fig:skull}
            \end{subfigure}%
            ~ 
            \begin{subfigure}{0.5\textwidth}
                \centering
                \includegraphics[width=0.75\linewidth]{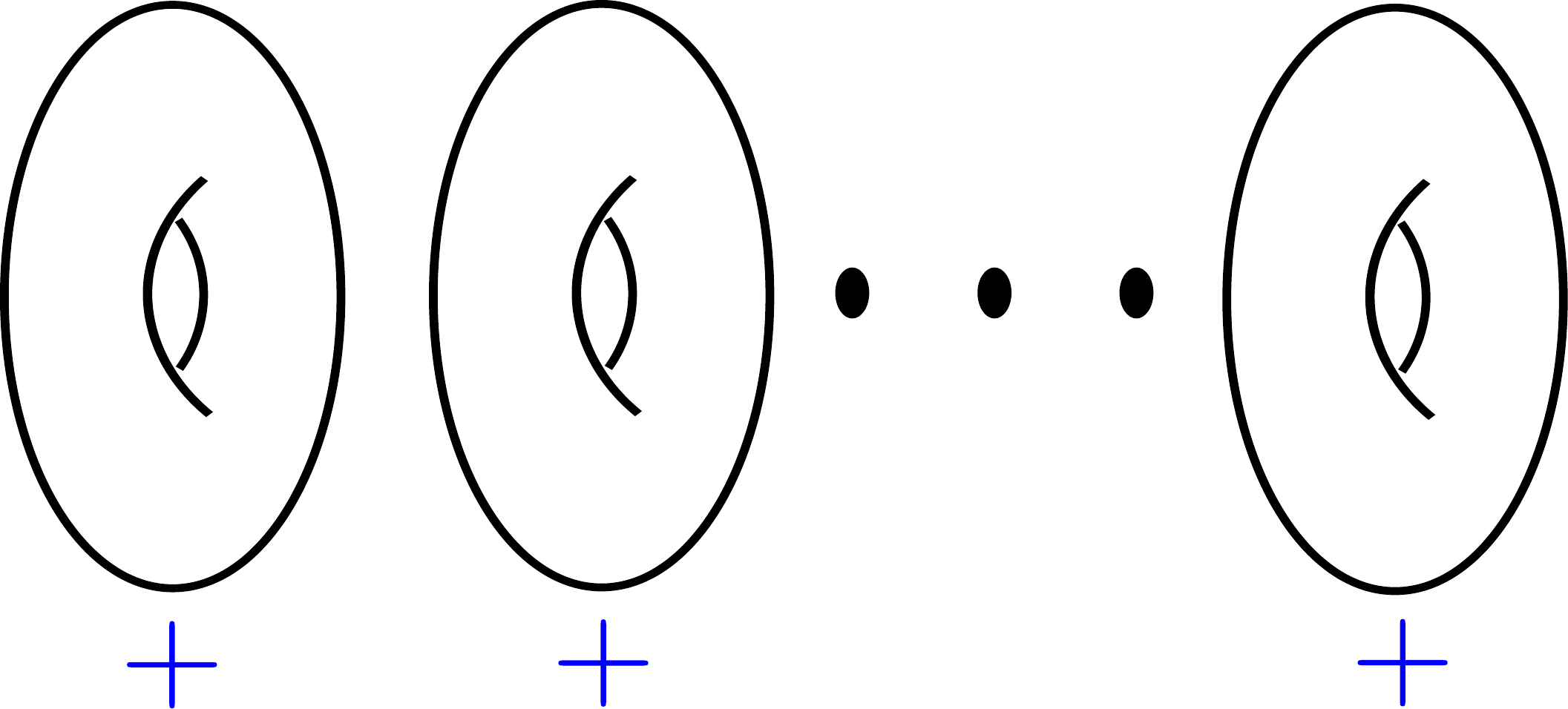}
                \caption{The Teeth}
                \label{fig:teeth}
            \end{subfigure}
            \caption{The Skull and Teeth}
        \end{figure*}
        
        With these sub-surfaces constructed, we now glue them to construct our desired surface $S$. In Figure \ref{fig:main preglue} we see the components before they are glued together, and in Figure \ref{fig:mainsurface} we see the fully constructed surface. First, we glue the teeth to the, without loss of generality, negative component of the skull. Second, without loss of generality, we glue the negative component of the skull to a positive component of the spine that is glued to only one negative component of the spine. These final gluings will account for the remaining $\abs{\#|S_+|-\#|S_-|}+1$ folds. We reiterate, we will discuss the details of these surgeries shortly. Thus, our surface $S$ is constructed and has, without loss of generality:
            \begin{itemize}[itemsep=0.1in]
                \item positive components equal to:
                    \[(\#|S_-|-1)+1+\abs{\#|S_+|-\#|S_-|}=\#|S_+|\]
                \item negative components equal to:
                    \[(\#|S_-|-1)+1=\#|S_-|\]
                \item genus equal to:
                    \[2(\#|S_-|-1)+(g+2-(\#|S_+|+\#|S_-|))+\abs{\#|S_+|-\#|S_-|}=g\]
                \item has a number of folds equal to:
                    \[(2(\#|S_-|-1)-1)+(|\Sigma|+2-(\#|S_+|+\#|S_-|))+(\abs{\#|S_+|-\#|S_-|}+1)=|\Sigma|\]
            \end{itemize}
            
        \begin{figure}[h]
            \centering
            \includegraphics[width=0.75\linewidth]{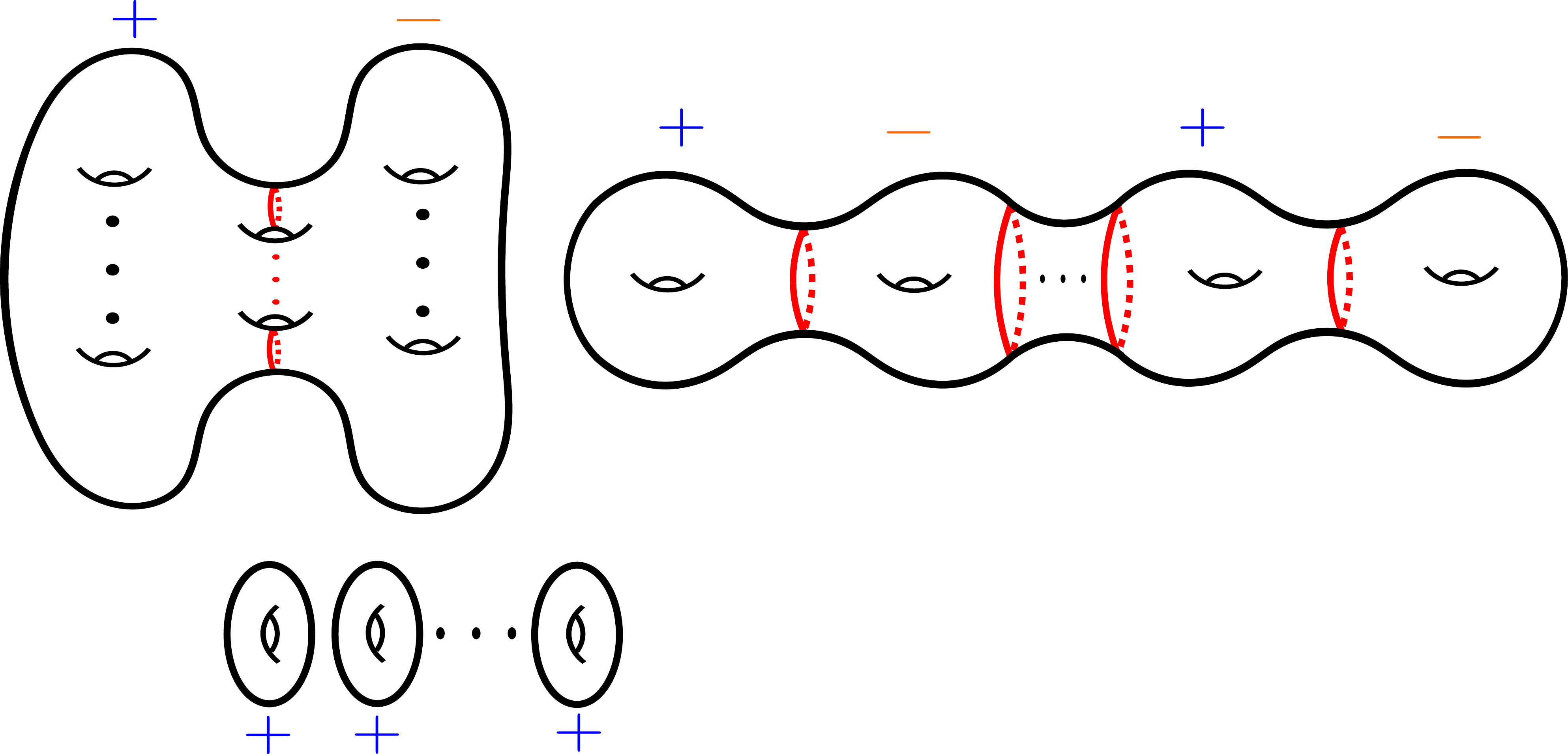}
            \caption{The skull, spine, and teeth separate.}
            \label{fig:main preglue}
        \end{figure}

        \begin{figure}[h]
            \centering
            \includegraphics[width=0.75\linewidth]{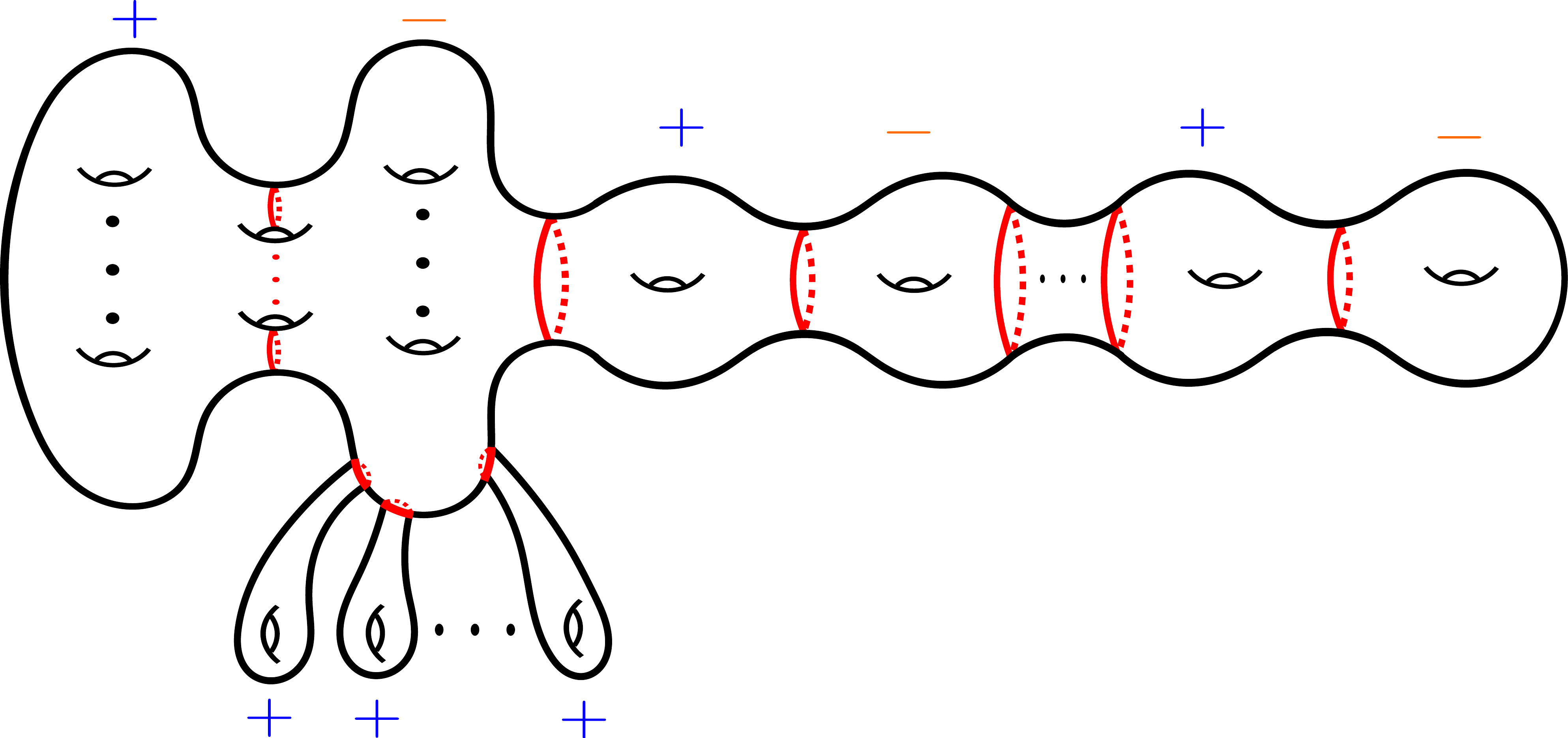}
            \caption{The constructed surface $S$.}
            \label{fig:mainsurface}
        \end{figure}

        With the surface $S$ constructed, we now can define the conditions of a desired simple fold map $f:S\to\R^2$. The desired simple fold map $f$ is one that, without loss of generality: has folds as the cocores of every gluing from our construction above, preserves the orientation on all positive components of $S_+$, and reverses orientation on all negative of components $S_-$. We emphasize that such a map exists by Èliašberg \cite{El70}, since the Euler characteristic of $S_+$ and Euler characteristic of $S_-$ agree. To show this equivalence, we begin by examining the Euler characteristic of $S_+$. Recall, without loss of generality, that $S_+$ consists of $\#|S_+|$ disjoint surfaces with boundary where one component is in the skull, $\abs{\#|S_+|-\#|S_-|}$ components are the teeth, and $\#|S_-|-1$ components are in the spine. These components of $S_+$ have the following properties:
            \begin{itemize}[itemsep=0.1in]
                \item Each positive component in the spine is a surface of genus $1$ with $2$ boundary components.
                \item Each positive component of the teeth is a surface of genus $1$ with $1$ boundary components.
                \item The single positive component of the skull is a surface of genus $\frac{1}{2}(g-|\Sigma|+1)$ with $|\Sigma|+2-(\#|S_+|+\#|S_-|)$ boundary components. 
            \end{itemize}\vspace{0.1in}
        Thus, the disconnected surface $S_+$ has genus $g_+$ and $k_+$ boundary components where, without loss of generality:
            \[g_+=\frac{1}{2}(g-|\Sigma|+1)+\abs{\#|S_+|-\#|S_-|}+\#|S_-|-1=\#|S_+|+\frac{1}{2}(g-|\Sigma|-1)\]
            \[k_+=|\Sigma|+2-(\#|S_+|+\#|S_-|)+\abs{\#|S_+|-\#|S_-|}+2(\#|S_-|-1)=|\Sigma|\]
        Meaning that the Euler characteristic of $S_+$ is:
            \[\chi(S_+)=2(\#|S_+|)-2\left(\#|S_+|+\frac{1}{2}(g-|\Sigma|-1)\right)-|\Sigma|=1-g\]

        Similarly, without loss of generality, recall that $S_-$ consists of $\#|S_-|$ disjoint surfaces with boundary where one component is in the skull and $\#|S_-|-1$ components are in the spine. These components of $S_-$ have the following properties:
            \begin{itemize}[itemsep=0.1in]
                \item One negative component of the spine is a surface of genus $1$ with $1$ boundary component
                \item Each of the other $\#|S_-|-2$ negative components of the spine is surface of genus $1$ with $2$ boundary components.
                \item The single negative component of the skull is a surface of genus $\frac{1}{2}(g-|\Sigma|+1)$ with $|\Sigma|+2-(\#|S_+|+\#|S_-|)+\abs{\#|S_+|-\#|S_-|}+1$ boundary components. 
            \end{itemize}\vspace{0.1in}
        Thus, the disconnected surface $S_-$ has genus $g_-$ and $k_-$ boundary components where, without loss of generality:
            \[g_-=\frac{1}{2}(g-|\Sigma|+1)+\#|S_-|-1=\#|S_-|+\frac{1}{2}(g-|\Sigma|-1)\]
            \[k_-=|\Sigma|+2-(\#|S_+|+\#|S_-|)+\abs{\#|S_+|-\#|S_-|}+1+2(\#|S_-|-2)+1=|\Sigma|\]
        Meaning that the Euler characteristic of $S_-$ is:
            \[\chi(S_-)=2(\#|S_-|)-2\left(\#|S_-|+\frac{1}{2}(g-|\Sigma|-1)\right)-|\Sigma|=1-g\]
        Therefore, such a simple fold map exists where the folds occur at the desired boundaries.
        
        We now show that the self-intersections of the folds are as desired by describing the specifics of the gluings. The surface path components of $S$, mapped by our desired simple fold map $f$, are viewed as immersed surfaces with boundary where these boundaries are the folds. Recall that by the sharpness argument of Theorem \ref{Thm:Self_Int_of_Boundary}, self-intersections of immersed boundaries of surfaces with multiple boundary components can be concentrated on a single immersed boundary, leaving all other boundary components embedded. Additionally, recall that by Theorem \ref{Thm:Self_Int_of_Boundary}, the minimum number of self-intersections of the the immersed boundary of a surface of genus $g$ is $2g+2$. We now define where the self-intersections of the boundaries will occur. Using our construction, with the skull, spine, and teeth, we will concentrate the self-intersections within the creation of the vertebrae, the creation of the skull, and the connection of the teeth to the skull. All other folds will have no self-intersections. 
        
        We begin by examining the spine and vertebrae. By Theorem \ref{Thm:Self_Int_of_Boundary}, the boundaries of a genus-$1$ component must have at least $4$ self-intersections. From our construction, each vertebrae is two, genus-$1$ tori glued together, and then the spine is constructed by gluing vertebrae together. By Proposition \ref{Prop:SpecificBoundaries}, each surface component, except for one negative component, of the spine is constructed as surface of genus one with two boundary components: $B_{3,1}^{+}$ and $C^{+}_{0}$. The remaining negative component has a single boundary component $B_{3,1}^{+}$. In summary, the single fold component \textit{within} each vertebrae has exactly $4$ self-intersections, and the fold components joining the vertebrae together each have $0$ self-intersections. Therefore, since the spine has $2(\#|S_-|-1)$ surface components, the folds within the spine will have a total of $4(\#|S_-|-1)$ self-intersections.

        We now examine the positive component of the skull and the teeth. Recall that the skull was constructed from two surfaces, one positive and one negative, and the positive component is a surface with genus $\frac{1}{2}(g-|\Sigma|+1)$ and $|\Sigma|+2-(\#|S_+|+\#|S_-|)$ boundary components. By Proposition \ref{Prop:SpecificBoundaries}, we construct the boundary of the positive component of the skull to be:
            \[B^{+}_{g-|\Sigma|+1,1}\sqcup\bigsqcup_{i=1}^{|\Sigma|+2-(\#|S_+|+\#|S_-|)-1}C^{+}_{0}\] 
        Moreover, each tooth is a surface of genus one with a single boundary component constructed to be $B^{+}_{3,1}$.

        With the spine, teeth, and positive component of the skull examined, we can now see how all of these components connect together using the negative component of the skull. Recall that the negative component of the skull is a surface of genus $\frac{1}{2}(g-|\Sigma|+1)$ and must have $\left(|\Sigma|+2-(\#|S_+|+\#|S_-|)\right)+\abs{\#|S_+|-\#|S_-|}+1$ boundary components. Note that $\left(|\Sigma|+2-(\#|S_+|+\#|S_-|)\right)$ boundaries are from creating the skull, $\abs{\#|S_+|-\#|S_-|}$ boundaries are from attaching teeth, and the final one is from attaching the skull to the spine. Thus, by Proposition \ref{Prop:SpecificBoundaries} we construct the boundary of the negative component of the skull to be:
            \[B^{+}_{g-|\Sigma|+1,1}\sqcup\left(\bigsqcup_{i=1}^{|\Sigma|+2-(\#|S_+|+\#|S_-|)-1}C^{+}_{0}\right)\sqcup\left(\bigsqcup_{j=1}^{\abs{\#|S_+|-\#|S_-|}} C^{+}_{0}\right)\sqcup C^+_0\] 

        Additionally, recall, from Example \ref{Positive_Isotopy_Contrapositive}, that $C^{+}_{0}$ is positively isotopic to $A^+_{2}$, and $A^+_{2}$ is positively isotopic to $B^+_{3,1}$. Therefore, the boundary of the negative component of the skull is positively isotopic to:
            \[B^{+}_{g-|\Sigma|+1,1}\sqcup\left(\bigsqcup_{i=1}^{|\Sigma|+2-(\#|S_+|+\#|S_-|)-1}C^{+}_{0}\right)\sqcup\left(\bigsqcup_{j=1}^{\abs{\#|S_+|-\#|S_-|}} B^{+}_{3}\right)\sqcup C^+_0\] 
        Therefore, all folds within the skull and teeth have a total of $g-|\Sigma|+3+4\abs{\#|S_+|-\#|S_-|}$ self-intersections. Thus, without loss of generality, the folds of the surface $S$ under $f$ have self-intersection equal to:
            \[\Delta_\Sigma=4(\#|S_-|-1)+g-|\Sigma|+3+4\abs{\#|S_+|-\#|S_-|}=4(\#|S_+|)+g-|\Sigma|-1\]
        Which completes the proof, since, without loss of generality:
            \[2(\rho(G)+1+n)-(\chi(S)/2+|\Sigma|)=4(\#|S_+|)+g-|\Sigma|-1\]
    \end{proof}

\begin{example}\label{MainTheoremExample}
    Given specific values for $\#|S_+|$, $\#|S_-|$, $|\Sigma|$, and $g$, let's construct a surface along with a fold map that admits a number of self-intersection points $\Delta_{\Sigma}$ identical to our sharp lower bound. For this example let us consider 
    \begin{align*}
       \#|S_+| &= 5 \\
       \#|S_-| &= 3 \\
       |\Sigma| &= 9 \\
       g &= 14
    \end{align*}

    A quick calculation yields that, given these values, we aspire to construct a simple stable map that has $\Delta_{\Sigma} = 24$. We realize this example by following the construction of our sharp lower bound shown in Theorem \ref{thm:main theorem sharpness}. We first construct the spine of the desired surface. The spine of the surface will have $\#|S_-| - 1 = 2$ positive components and will have $\#|S_-| - 1 = 2$ negative components. The spine will contribute $2(\#|S_-|-1) = 4$ genus and $2(\#|S_-|-1) - 1 = 3$ fold components. We next turn to construction the skull of the desired surface. The skull will consist of one positive and one negative component. The genus of the skull is then
        \[g+2 - (\#|S_+| + \#|S_-|) = 16 - 8 = 8\] 
    and the number of fold components accounted for is \[|\Sigma|+2-(\#|S_+|+\#|S_-|) = 11 - 8 = 3\] The last anatomy to construct is the teeth, which are $|\#|S_+| - |S_-|| = 2$ tori of genus $1$.

    Next, by the surgery described in the proof of Theorem \ref{thm:main theorem sharpness}, we glue the spine to the skull via the trivial positive concordance from $C_0^+$ to itself, and each tooth is glued to the skull via positive concordance from $C_0^+$ to $B_{3,1}^+$. Recalling that each surgery adds one component to $|\Sigma|$ we obtain a surface, along with a fold map, that has the desired $\#|S_+|= 5$, $\#|S_-|=3$, $|\Sigma|=9$, and $g=14$. This surface is shown in Figure \ref{fig:ExampleSurface}.

    \begin{figure}[h]
        \centering
        \includegraphics[width=0.8\linewidth]{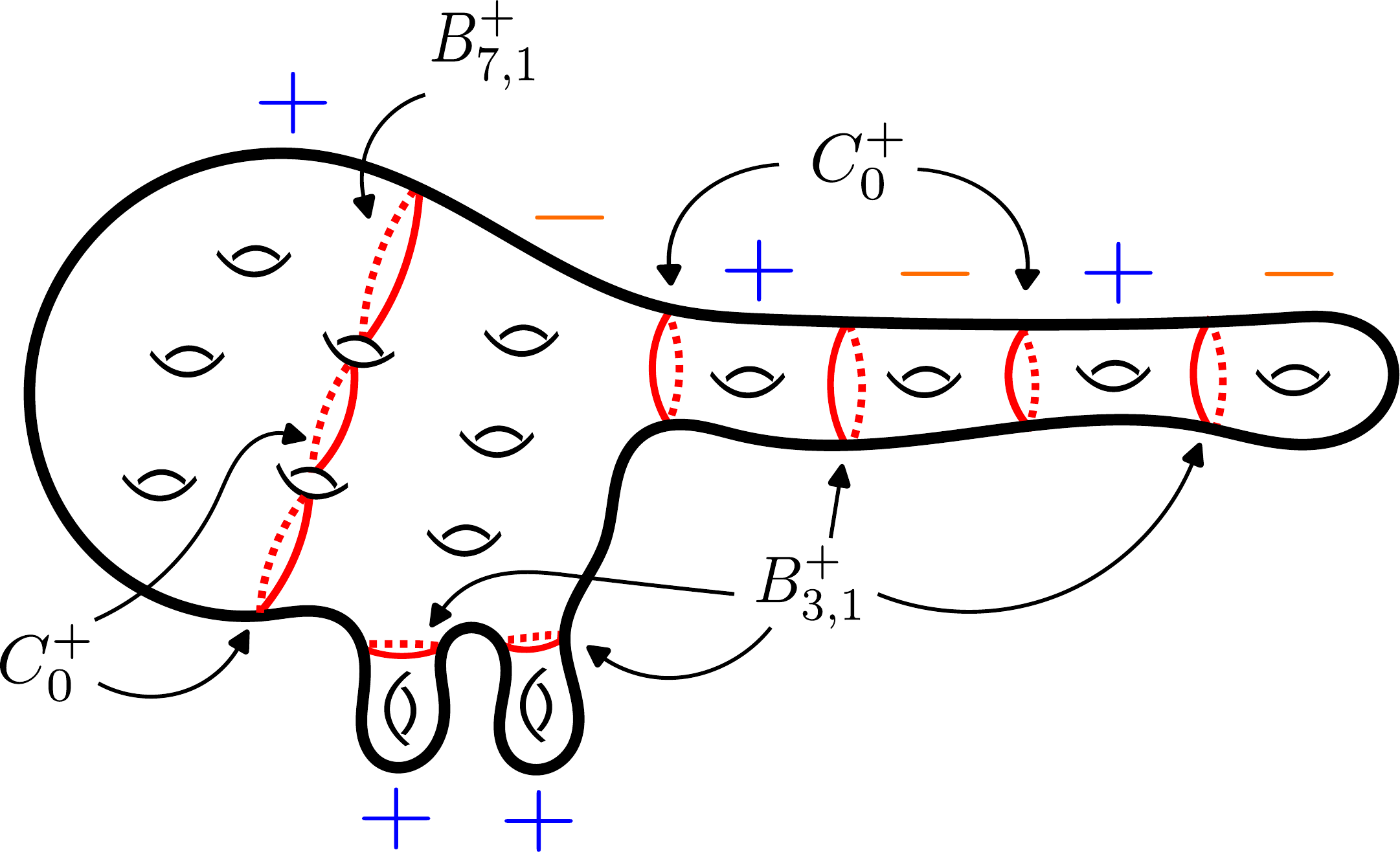}
        \caption{Surface from Example \ref{MainTheoremExample}}
        \label{fig:ExampleSurface}
    \end{figure}
    \begin{figure}[h]
    \centering
    \includegraphics[width=0.5\linewidth]{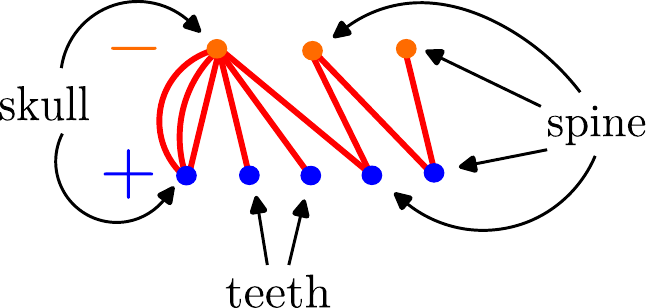}
    \caption{Biparitite graph corresponding to Example \ref{MainTheoremExample}}
    \label{fig:MainExampleGraph}
    \end{figure}

    In Figure \ref{fig:ExampleSurface}, we see the skull consisting of a positive and a negative component whose common boundary is three components of $|\Sigma|$. Note that since these $3$ components are a common boundary for two surface components, the surgery results in adding two genera to the final constructed surface.  Now, since each component of the skull has genus $3$, we note that under a simple fold map, there must be at least $8$ self-intersection points in these 3 components of $|\Sigma|$ within the skull. By the construction used in the proof of Theorem \ref{Thm:Self_Int_of_Boundary}, we have that one of these curves maps to a $B_{7,1}^+$ and the other two map to a $C_0^+$. Next, since there are two teeth (which are both positive components) there will also be $2$ components of $|\Sigma|$ after they are surged to the skull. With each tooth having genus $1$, by Guth \cite{Gu09}, each singular set component bounding a tooth will map to a $B_{3,1}^+$. 
    
    As for the spine, since it is glued to the skull trivially, this surgery adds one component of $|\Sigma|$ that is mapped to $C_0^+$. The spine itself consists of two vertebrae, accounting for two positive surface components and two negative surface components. As shown in the previous theorem, each vertebrae's positive component is glued to its negative component such that the resulting singular set component maps to $B_{3,1}^+$. Then, the trivial gluing of the two vertebrae together accounts for the final singular set component, which is mapped to a $C_0^+$. In totality, the singular set maps to 
        \[
        \underset{\text{the skull}}{\underline{B_{7,1}^+ \sqcup C_0^+ \sqcup C_0^+}} \sqcup \underset{\text{the teeth}}{\underline{B_{3,1}^+ \sqcup B_{3,1}^+}} \sqcup \underset{\text{the spine}}{\underline{C_0^+ \sqcup B_{3,1}^+ \sqcup C_0^+ \sqcup B_{3,1}^+}} 
        \]
    If we count the number of self-intersections of these curves, we get
        \[
        (7+1)+0+0+(3+1)+(3+1)+0+(3+1)+0+(3+1) = 24
        \]
Finally, we corroborate that this agrees with the minimum of our previous theorem

\begin{align*}
    \Delta_\Sigma=4(\#|S_-|-1)+g-|\Sigma|+3+4\abs{\#|S_+|-\#|S_-|} &= 4(\#|S_+|)+g-|\Sigma|-1 \\
    &= 4(5) + 14 - 9 - 1 \\
    &= 24
\end{align*}
and 
\begin{align*}
    2(\rho(G)+1+n)-(\chi(S)/2+|\Sigma|) &= 2(7+1+2) - (-13 + 9) \\
                                        &= 20 - (-4) = 24
\end{align*}
where $G$ is the associated bipartite graph shown in Figure \ref{fig:MainExampleGraph}.

\end{example}

\subsection{Relating Sharp Lower Bound to the Homology of the Surface}

Let $|H_*(S)|$ be the sum of Betti numbers of the closed orientable surface $S$. In \cite[Section 2.1]{Gromov_2009}, Gromov proved the following inequality
\[
\Delta_{\Sigma} \geq \frac{1}{2}|H_*(S)| - |\Sigma|
\]
We wish to compare our result to Gromov's.
Note, for closed, oriented surfaces of finite genus $g$, we have $|H_*(S)| = 2g+2$
\begin{align*}
\chi(S) = 2 - 2g &\Rightarrow 2g + 2 + \chi(S) = 4 \\
&\Rightarrow |H_*(S)| + \chi(S) = 4 
\end{align*}

\noindent Substituting, we see 
    \begin{align*}
    \Delta_{\Sigma} &\geq 2(\rho(G)+n+1) - (\frac{1}{2}\chi(S) + |\Sigma|) \\
        &=2\rho(G)+2n + 2 -\frac{1}{2}(4-|H_*(S)|)-|\Sigma|\\
        &=2(\rho(G)+ n) +\frac{1}{2}|H_*(S)| -|\Sigma|\\
    \end{align*}
This yields the following refinement of Gromov's homological lower bound.

\begin{corollary}\label{coro:gromov extension}
    Let $S$ be a closed orientable surface and $f:S \to \R^2$ a simple fold map. Then, 
        \[
            \Delta_{\Sigma}  \geq 2(\rho(G)+ n) +\frac{1}{2}|H_*(S)| -|\Sigma|
        \]
    where $|\Sigma|$ is the number of path components of fold-singularities, $\rho(G)$ is the number of edges in a spanning tree of the graph $G$ corresponding to $f$, $\Delta_{\Sigma}$ is the number of self-intersections of fold singularities, and $|H_*(S)|$ denotes the sum of the Betti numbers of $S$.
\end{corollary}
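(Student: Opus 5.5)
This corollary is a direct substitution into Theorem \ref{Thm:Main_Theorem_2025}; no new geometry is needed. I will start from the lower bound
\[
\Delta_{\Sigma}\geq 2(\rho(G)+1+n)-\bigl(\chi(S)/2+|\Sigma|\bigr).
\]
The hypotheses are those of that theorem: $S$ is closed and oriented, $f$ is a simple fold map, and we keep the paper's standing assumption that the splitting has no planar components. If the corollary is read more generally, I will state this assumption explicitly, since the theorem is only available under it.

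The one step is to rewrite $\chi(S)$ in terms of Betti numbers. For a closed orientable surface of genus $g$ we have $b_0=b_2=1$ and $b_1=2g$. Hence $|H_*(S)|=2g+2$. Since $\chi(S)=2-2g$, this gives the identity $|H_*(S)|+\chi(S)=4$, that is, $\chi(S)=4-|H_*(S)|$.

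Substituting into the bound gives
\[
2\rho(G)+2n+2-\tfrac12\bigl(4-|H_*(S)|\bigr)-|\Sigma| \;=\; 2(\rho(G)+n)+\tfrac12|H_*(S)|-|\Sigma|,
\]
which is the claimed inequality.

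There is no real obstacle. The only points to check are that Betti numbers are taken with field coefficients, so torsion plays no role (a surface has none anyway), and that the hypotheses of Theorem \ref{Thm:Main_Theorem_2025} are met. Finally, I will note why this refines Gromov's bound $\Delta_\Sigma\geq\frac12|H_*(S)|-|\Sigma|$: the extra term $2(\rho(G)+n)$ is nonnegative. Moreover, it is at least $2$ whenever the splitting has both a positive and a negative component, since then $G$ has at least two vertices and $\rho(G)\geq 1$.
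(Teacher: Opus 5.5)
Your proposal is correct and follows the paper's argument exactly: you substitute $\chi(S)=4-|H_*(S)|$, which comes from $|H_*(S)|=2g+2$ and $\chi(S)=2-2g$, into the bound of Theorem \ref{Thm:Main_Theorem_2025} and simplify. You are also right to make the no-planar-components hypothesis explicit, since the corollary as stated omits it and it is genuinely needed: the standard projection of the round $S^2$ has $\Delta_\Sigma=0$, while the right-hand side equals $2$.
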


\bibliography{references}
\bibliographystyle{plain}

\end{document}